\numberwithin{equation}{section}
\theoremstyle{plain}
\newtheorem{theorem}{Theorem}[section]
\newtheorem{lemma}{Lemma}[section]
\newtheorem{corollary}{Corollary}[section]
\newtheorem{proposition}{Proposition}[section]
\theoremstyle{definition}
\theoremstyle{remark}
\newtheorem{remark}{Remark}[section]
\newcommand{\ignore}[1]{}
\newcommand{\yb}{\mathbf y}
\newcommand{\E}{{\mathcal E}}
\newcommand{\la}{\langle}
\newcommand{\ra}{\rangle}
\newcommand{\eps}{\epsilon}
\newcommand{\smfrac}[2]{{\textstyle \frac{#1}{#2}}}
\begin{document}

%%%%%%%%%%%%%
% Topmatter %
%%%%%%%%%%%%%
\title
[Positive-Definiteness of the Blended Force-Based Quasicontinuum Method]
{Positive-Definiteness of the Blended Force-Based Quasicontinuum Method}
\author{ Xingjie Helen Li, Mitchell Luskin and Christoph Ortner}

%%%%%%%%
% Date %
%%%%%%%%

\thanks{ This work was supported in part by DMS-0757355, DMS-0811039,
  the PIRE Grant OISE-0967140, and the University of Minnesota
  Supercomputing Institute.  This work was also supported by the
  Department of Energy under Award Number DE-SC0002085. CO was
  supported by EPSRC Grant EP/H003096 ``Analysis of
  Atomistic-to-Continuum Coupling Methods''.}

%%%%%%%%%%%%
% Keywords %
%%%%%%%%%%%%
\keywords{quasicontinuum, atomistic-to-continuum, blending, stability}

%%%%%%%%%%%%%%%%%
% Subject class %
%%%%%%%%%%%%%%%%%
\subjclass[2000]{65Z05,70C20}

\date{\today}

\begin{abstract}
  The development of consistent and stable quasicontinuum models for
  multi-dimensional crystalline solids remains a challenge. For
  example, proving stability of the force-based quasicontinuum (QCF)
  model \cite{Dobson:2008a} remains an open problem. In 1D and 2D, we
  show that by {\em blending} atomistic and Cauchy--Born continuum
  forces (instead of a sharp transition as in the QCF method) one
  obtains positive-definite blended force-based quasicontinuum (B-QCF)
  models. We establish sharp conditions on the required blending
  width.
\end{abstract}

\thispagestyle{empty}

\maketitle
%%%
\section{Introduction}

Atomistic-to-continuum coupling methods (a/c methods) have been
proposed to increase the computational efficiency of atomistic
computations involving the interaction between local crystal defects
with long-range elastic
fields~\cite{curt03,LinP:2006a,Miller:2003a,Shimokawa:2004,E:2004,Miller:2008,Legoll:2005,bqce11}.
Energy-based methods in this class, such as the quasicontinuum model
(denoted QCE \cite{Ortiz:1995a}) exhibit spurious interfacial forces
(``ghost forces'') even under uniform strain~\cite{Shenoy:1999a,
  Dobson:2008a}.  The effect of the ghost force on the error in
computing the deformation and the lattice stability by the QCE
approximation has been analyzed in~\cite{Dobson:2008a, Dobson:2008c,
  mingyang, doblusort:qce.stab}. The development of more accurate
energy-based a/c methods is an ongoing process
\cite{Shimokawa:2004,E:2004, Shapeev2D:2011,
  LuskinXingjie.qnl1d,OrtnerZhang:2011,XiaoBely:2002}.
%[***ortner/zhang***xiao/belytschko***].

An alternative approach to a/c coupling is the force-based
% approach to use the ghost force
% correction method (GFC) which achieves an increased accuracy by adding
% a correction to the ghost forces as a dead load during a quasistatic
% process~\cite{Shenoy:1999a,Dobson:2008a,dobsonluskin08,Miller:2008,qcf.iterative}.
% More accurate coupling can be achieved by a force-based
quasicontinuum (QCF) approximation~\cite{doblusort:qcf.stab, qcf.stab,
  curt03, Miller:2003a, Lu.bqcf:2011}, but the non-conservative and
indefinite equilibrium equations make the iterative solution and the
determination of lattice stability more
challenging~\cite{qcf.iterative, qcf.stab, DobShapOrt:2011}. Indeed,
it is an open problem whether the (sharp-interface) QCF method is
stable in dimension greater than one.

Many blended a/c coupling methods have been proposed in the literature, e.g.,
~\cite{xiao:bridgingdomain,badia:onAtCcouplingbyblending,bridging,badia:forcebasedAtCcoupling,seleson:bridgingmethods,fish:concurrentAtCcoupling,prudhomme:modelingerrorArlequin,bauman:applicationofArlequin,XiBe:2004}.
In the present work, we formulate a blended force-based quasicontinuum
(B-QCF) method, similar to the method proposed in \cite{Lu.bqcf:2011},
which smoothly blends the forces of the atomistic and continuum model
instead of the sharp transition in the QCF method. In 1D and 2D, we
establish sharp conditions under which a linearized B-QCF operator is
positive definite.

Our results have three advantages over the stability result proven in
\cite{Lu.bqcf:2011}. Firstly, we establish $H^1$-stability (instead of
$H^2$-stability) which opens up the possibility to include defects in
the analysis, along the lines of
\cite{OrtnerShapeev:2010,DobShapOrt:2011}. Secondly, our conditions
for the positive definiteness of the linearized B-QCF operator
are needed to ensure the convergence of several popular iterative solution
methods for the B-QCF equations~\cite{qcf.iterative,luskin.iter.stat}.
We note that the convergence of these popular iterative solution
methods for the QCF equations cannot be guaranteed because of its
indefinite linearized operator~\cite{qcf.iterative,luskin.iter.stat}.  Thirdly, our results admit
much narrower blending regions, which is crucial for the computational
efficiency of the method.

The remainder of the paper is split into two sections: In
Section \ref{1DBQCFsection} we analyze positivity of the B-QCF
operator in a 1D model, whereas in Section \ref{2DBQCFsection} we
analyze a 2D model. Our methods and results are likely more widely applicable to
other force-based model couplings.

\section{Analysis of the B-QCF Operator in $1$D}\label{1DBQCFsection}
\subsection{Notation}
\label{notation}
We denote the scaled reference lattice by $\eps \mathbb{Z}:=
\{\eps\ell : \ell\in\mathbb{Z}\}$. We apply a macroscopic strain $F >
0$ to the lattice, which yields
\[
\mathbf{y}_F := F\eps \mathbb{Z} = (F \eps \ell)_{\ell \in \mathbb{Z}}.
\]
The space $\mathcal{U}$ of $2N$-periodic zero mean displacements
$\mathbf{u}=(u_{\ell})_{\ell \in \mathbb{Z}}$ from $\mathbf{y}_{F}$ is
given by
\[
\mathcal{U}:=\bigg\{\mathbf{u} : u_{\ell+2N}=u_{\ell}
\text{ for }\ell\in \mathbb{Z},
\text{ and }{\textstyle \sum_{\ell=-N+1}^{N}u_{\ell}}=0\bigg\},
\]
and we thus admit deformations $\mathbf{y}$ from the space
\[
\mathcal{Y}_{F}:=\{\mathbf{y}:
\mathbf{y}=\mathbf{y}_{F}+\mathbf{u}\text{ for some }\mathbf{u}\in
\mathcal{U}\}.
\]
We set $\eps=1/N$ throughout so that the reference length of the
computational cell remains fixed.

We define the discrete differentiation operator, $D\mathbf{u}$, on
periodic displacements by
\[
(D\mathbf{u})_{\ell}:=\frac{u_{\ell}-u_{\ell-1}}{\epsilon}, \quad
-\infty<\ell<\infty.
\]
We note that $\left(D\mathbf{u}\right)_{\ell}$ is also $2N$-periodic
in $\ell$ and satisfies the zero mean condition. We will denote
$\left(D\mathbf{u}\right)_{\ell}$ by $Du_{\ell}$.
We then define $\left(D^{(2)}\mathbf{u}\right)_{\ell}$ and $\left(D^{(3)}\mathbf{u}\right)_{\ell}$
for $-\infty<\ell<\infty$ by
\begin{equation*}
\left(D^{(2)}\mathbf{u}\right)_{\ell}:=\frac{Du_{\ell+1}-Du_{\ell}}{\epsilon};\quad
\left(D^{(3)}\mathbf{u}\right)_{\ell}:=\frac{Du^{(2)}_{\ell}-Du^{(2)}_{\ell-1}}{\epsilon}.
\end{equation*}
To make the formulas more concise we sometimes denote $Du_{\ell}$ by
$u'_{\ell}$, $D^{(2)}u_{\ell}$ by $u''_{\ell}$, etc., when there is no
confusion in the expressions.

%Now we
%define various discrete norms on the space $\mathcal{U}$.
For a displacement $\mathbf{u}\in \mathcal{U}$ and its discrete derivatives, we employ the weighted
discrete $\ell_{\epsilon}^{2}$  and $\ell_{\epsilon}^{\infty}$ norms by
\begin{align*}
\|\mathbf{u}\|_{\ell_{\epsilon}^{2}}&:= \left( \epsilon
\sum_{\ell=-N+1}^{N}|u_{\ell}|^{2}\right)^{1/2},\qquad
\|\mathbf{u}\|_{\ell_{\epsilon}^{\infty}}:=\max\limits_{-N+1\le \ell\le N}|u_{\ell}|,
\end{align*}
and the weighted inner product
\[
\la \mathbf{u},\mathbf{w}\ra :=\sum\limits_{\ell=-N+1}^{N}\epsilon u_{\ell}w_{\ell}.
\]
\begin{comment}
Finally, for smooth real-valued functions $\E(\yb)$ defined for
$\yb\in\mathcal{Y}_{F},$ we define the first and second derivatives (variations) by
\begin{equation*}
\begin{split}
\la\delta\mathcal{E}(\mathbf{y}),\mathbf{w}\ra&:=\sum_{\ell=-N+1}^{N}
 \frac{\partial \mathcal{E}}{\partial y_{\ell}}(\mathbf{y})w_{\ell}\qquad
\text{for all }\mathbf{w}\in \mathcal{U}\\
\la\delta^2\mathcal{E}(\mathbf{y})\mathbf{v},\mathbf{w}\ra&:=\sum_{\ell,\,m=-N+1}^{N}
%\sum_{m=-N+1}^{N}
 \frac{\partial^2 \mathcal{E}}{\partial y_{\ell}\partial y_{m}}(\mathbf{y})v_{\ell}
 w_{m}\qquad
\text{for all }\mathbf{v},\,\mathbf{w}\in \mathcal{U}.
\end{split}
\end{equation*}
\end{comment}
We will frequently use the following summation by parts identity:
\begin{lemma}[Summation by parts]
  Suppose $\{f_{k}\}_{k = m}^{n+1}$ and $\{g_{k}\}_{k = m}^{n+1}$ are two
  sequences, then
\begin{equation*}
\sum\limits_{k=m}^{n}f_{k}\left(g_{k+1}-g_{k}\right)
=\left[f_{n+1}g_{n+1}-f_{m}g_{m}\right]-\sum_{k = n}^{m}g_{k+1}\left(f_{k+1}-f_{k}\right).
\end{equation*}
\end{lemma}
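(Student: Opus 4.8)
The plan is to establish the identity by the standard Abel summation argument: expand the sum appearing on the right-hand side into two pieces, perform a single index shift on one of them so that the resulting boundary terms cancel against the bracketed term, and then recognize what remains as the left-hand side.

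Concretely, I would start from the right-hand side and split
\[
\sum_{k=m}^{n} g_{k+1}\left(f_{k+1}-f_{k}\right)
 = \sum_{k=m}^{n} g_{k+1}f_{k+1} - \sum_{k=m}^{n} g_{k+1}f_{k}.
\]
In the first of these sums I substitute $j=k+1$, obtaining $\sum_{j=m+1}^{n+1} g_{j}f_{j}$, and then rewrite this as $\sum_{k=m}^{n} f_{k}g_{k} - f_{m}g_{m} + f_{n+1}g_{n+1}$ by removing the $k=m$ term and adjoining the $k=n+1$ term. Substituting back, the extra boundary contributions $-f_{m}g_{m}+f_{n+1}g_{n+1}$ are exactly cancelled by the bracket $\left[f_{n+1}g_{n+1}-f_{m}g_{m}\right]$, and what survives is
\[
-\sum_{k=m}^{n} f_{k}g_{k} + \sum_{k=m}^{n} f_{k}g_{k+1}
 = \sum_{k=m}^{n} f_{k}\left(g_{k+1}-g_{k}\right),
\]
which is the left-hand side. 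Equivalently, one could argue by induction on $n$, the inductive step again reducing to one index shift.

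There is no real obstacle here: the only point needing care is the bookkeeping of the summation limits under the shift, and in particular the sum on the right is to be read as running over $k=m,\dots,n$, matching the index set of the sum on the left. Both sequences are used only on $\{m,\dots,n+1\}$, which is precisely why they are assumed to be given on that range; everything else is a two-line computation.
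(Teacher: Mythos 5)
Your argument is correct: it is the standard Abel summation computation, and the paper itself states this lemma without proof, so there is nothing to compare against. You also rightly read the right-hand sum as running over $k=m,\dots,n$ (the paper's limits $\sum_{k=n}^{m}$ are evidently a typo), and your index-shift bookkeeping makes the boundary terms cancel exactly as needed.
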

Also for future reference, we state a discrete Poincar{\'e} inequality
\cite{Ortner:2008a},
\[
\|\mathbf{v}\|_{\ell_{\epsilon}^{\infty}}\le \|D\mathbf{v}\|_{\ell_{\epsilon}^1}\quad\text{for all}\, \mathbf{v}\in\mathcal{U}.
\]
%%%%%%

\subsection{The next-nearest neighbor atomistic model and local QC approximation.}
We consider a one-dimensional ($1$D) atomistic chain with periodicity
$2N$, denoted ${\bf y} \in \mathcal{Y}$. The total atomistic energy
per period of ${\bf y}$ is given by
$\mathcal{E}^{a}(\mathbf{y})-\epsilon
\sum_{\ell=-N+1}^{N}f_{\ell}y_{\ell}$, where
\begin{equation}\label{AtomEnergy1D}
\mathcal{E}^{a}(\mathbf{y})
=\epsilon\sum_{\ell=-N+1}^{N}\left[\phi(y'_{\ell})+\phi(y'_{\ell}+y'_{\ell-1})\right]
\end{equation}
for a scaled Lennard-Jones type potential
\cite{LennardJones:1924a,Morse:1929a} $\phi$ and external forces
$f_{\ell}$.  The equilibrium equations are given by the force balance
at each atom: $F_\ell^a + f_\ell = 0$ where
\begin{align}\label{AtomEquil1D}
F_{\ell}^{a}(\mathbf{y}):=\frac{-1}{\epsilon}\frac{\partial \mathcal{E}^{a}(\mathbf{y})}{\partial y_{\ell}}
=& \frac{1}{\epsilon}\Big\{ \left[\phi'(y'_{\ell+1})+\phi'(y'_{\ell+2}+y'_{\ell+1})\right]
-\left[\phi'(y'_{\ell})+\phi'(y'_{\ell}+y'_{\ell-1})\right]
\Big\}.
\end{align}
We assume that the displacement $\mathbf{u}^a = \mathbf{y}^a -
\mathbf{y}_F$ is ``small'' and hence linearize the atomistic
equilibrium equations about $\mathbf{y}_{F}$ to obtain
\[
\left(L^{a}\mathbf{u}^a\right)_{\ell}=f_{\ell},\quad\text{for}\quad\ell=-N+1,\dots,N,
\]
where $\left(L^a\mathbf{v}\right)$ for a displacement $\mathbf{v}\in \mathcal{U}$ is given by
\[
\left(L^a\mathbf{v}\right)_{\ell}:=\phi''_{F}\frac{\left(-v_{\ell+1}+2v_{\ell}-v_{\ell-1}\right)}{\epsilon^{2}}+
\phi''_{2F}\frac{\left(-v_{\ell+2}+2v_{\ell}-v_{\ell-2}\right)}{\epsilon^{2}}.
\]
Here and throughout we use the notation $\phi''_{F}:=\phi''(F)$ and
$\phi''_{2F}:=\phi''(2F)$, where $\phi$ is the potential in
\eqref{AtomEnergy1D}.  We assume that $\phi''_{F} > 0$, which holds
for typical pair potentials such as the Lennard-Jones potential under
physically relevant deformations.

We will later require the following characterisation of the stability
of $L^a$.

\begin{lemma}
  \label{th:stab_atm}
  $L^a$ is positive definite, uniformly for $N \in \mathbb{N}$, if and
  only if $c_0 := \min(\phi_F'', \phi_F'' + 4 \phi_{2F}'') >
  0$. Moreover,
  \begin{displaymath}
    \la L^a \mathbf{u},\mathbf{u}\ra \geq c_0
    \|D\mathbf{u}\|^2_{\ell_{\epsilon}^2} \qquad \forall \mathbf{u}
    \in \mathcal{U}.
  \end{displaymath}
\end{lemma}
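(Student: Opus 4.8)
The plan is to turn $\la L^a\mathbf{u},\mathbf{u}\ra$ into an explicit sum of squares of difference quotients by summation by parts, and then read off the sharp constant. \textbf{Step 1: an exact identity.} Applying the summation-by-parts lemma to each of the two terms defining $L^a$ (with increments $1$ and $2$ respectively, discarding the boundary terms by $2N$-periodicity) and using $u_\ell - u_{\ell-2} = \epsilon\,(Du_\ell + Du_{\ell-1})$ to rewrite the second-neighbour second difference, I expect to obtain
\begin{equation*}
  \la L^a\mathbf{u},\mathbf{u}\ra
  = \phi_F''\,\|D\mathbf{u}\|_{\ell_\epsilon^2}^2
  + \phi_{2F}''\,\|D\mathbf{u} + \tau D\mathbf{u}\|_{\ell_\epsilon^2}^2 ,
  \qquad \mathbf{u}\in\mathcal{U},
\end{equation*}
where $\tau$ denotes the shift $(\tau\mathbf{w})_\ell := w_{\ell-1}$. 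The bookkeeping here (keeping track of reindexing under periodicity) is the only place needing care, but there is no genuine obstacle.

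\textbf{Step 2: the lower bound, via a sign split on $\phi_{2F}''$.} If $\phi_{2F}'' \ge 0$, the second term is nonnegative, so $\la L^a\mathbf{u},\mathbf{u}\ra \ge \phi_F''\|D\mathbf{u}\|_{\ell_\epsilon^2}^2 = c_0\|D\mathbf{u}\|_{\ell_\epsilon^2}^2$, since $c_0 = \phi_F''$ in this regime. If $\phi_{2F}'' < 0$, the triangle inequality together with $\|\tau\mathbf{w}\|_{\ell_\epsilon^2} = \|\mathbf{w}\|_{\ell_\epsilon^2}$ gives $\|D\mathbf{u}+\tau D\mathbf{u}\|_{\ell_\epsilon^2}^2 \le 4\|D\mathbf{u}\|_{\ell_\epsilon^2}^2$, and multiplying by the negative number $\phi_{2F}''$ reverses this to yield $\la L^a\mathbf{u},\mathbf{u}\ra \ge (\phi_F'' + 4\phi_{2F}'')\|D\mathbf{u}\|_{\ell_\epsilon^2}^2 = c_0\|D\mathbf{u}\|_{\ell_\epsilon^2}^2$. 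In both cases the stated estimate holds, and when $c_0>0$ it immediately gives uniform positive definiteness of $L^a$ (equivalently, $N$-independent coercivity in $\|\mathbf{u}\|_{\ell_\epsilon^2}$ after invoking the discrete Poincar\'e inequality, whose constant does not depend on $N$ because the reference cell has fixed length).

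\textbf{Step 3: sharpness (the ``only if'').} For the converse I would show that whenever $c_0 \le 0$ one can defeat uniform positivity, using exactly the extremizers of the inequalities in Step 2. If $\phi_F'' \le 0$, take $u_\ell = (-1)^\ell \in \mathcal{U}$; then $Du_\ell + Du_{\ell-1} \equiv 0$, so $\la L^a\mathbf{u},\mathbf{u}\ra = \phi_F''\|D\mathbf{u}\|_{\ell_\epsilon^2}^2 \le 0$ with $\|D\mathbf{u}\|_{\ell_\epsilon^2} \ne 0$, hence $L^a$ is not positive definite for any $N$. If $\phi_F'' > 0$ but $\phi_F'' + 4\phi_{2F}'' \le 0$ (so necessarily $\phi_{2F}'' < 0$), take the lowest nonconstant mode $u_\ell = \cos(\pi\ell/N) \in \mathcal{U}$; a short trigonometric computation (or passing to the discrete Fourier representation, where the symbol of $L^a$ is $\tfrac{4}{\epsilon^2}\sin^2(\tfrac{k\pi}{2N})\big[\phi_F'' + 4\phi_{2F}''\cos^2(\tfrac{k\pi}{2N})\big]$) gives $\la L^a\mathbf{u},\mathbf{u}\ra = \big(\phi_F'' + 4\phi_{2F}''\cos^2(\tfrac{\pi}{2N})\big)\|D\mathbf{u}\|_{\ell_\epsilon^2}^2$, whose ratio to $\|D\mathbf{u}\|_{\ell_\epsilon^2}^2$ tends to $\phi_F'' + 4\phi_{2F}'' = c_0 \le 0$ as $N \to \infty$, ruling out any $N$-independent positive lower bound. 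This completes the plan; the only routine parts are the summation by parts of Step 1 and the trigonometric identity behind the Step 3 computation, and the one point to double-check is that $c_0$ is matched with the correct sign regime of $\phi_{2F}''$ in Step 2.
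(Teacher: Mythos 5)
Your proof is correct and follows essentially the same route as the paper: the same sum-of-squares identity $\la L^a\mathbf{u},\mathbf{u}\ra = \epsilon\sum_\ell \phi_F''(u'_\ell)^2 + \epsilon\sum_\ell\phi_{2F}''(u'_{\ell-1}+u'_\ell)^2$, and the same alternating test function to annihilate the second-neighbour term. The only difference is that the paper delegates the case $\phi_{2F}''\le 0$ (both the lower bound with constant $\phi_F''+4\phi_{2F}''$ and its sharpness) to a cited reference, whereas you prove it directly via the elementary bound $\|D\mathbf{u}+\tau D\mathbf{u}\|_{\ell^2_\epsilon}^2\le 4\|D\mathbf{u}\|_{\ell^2_\epsilon}^2$ and a lowest-Fourier-mode computation; both of these supplementary arguments are sound.
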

\begin{proof}
  The case $\phi_{2F}'' \leq 0$ was treated in
  \cite{doblusort:qce.stab}, hence suppose that $\phi_{2F}'' > 0$. The
  coercivity estimate is trivial in this case, and it remains to show
  that it is also sharp. To that end, we note that
  \begin{displaymath}
    \la L^a \mathbf{u},\mathbf{u}\ra = \eps \sum_{\ell} \phi_F''
    (u_\ell')^2 + \eps \sum_{\ell} \phi_{2F}'' (u_{\ell-1}' + u_\ell')^2.
  \end{displaymath}
  Hence, testing with $u_\ell' = (-1)^\ell$ (this is admissible since
  there is an even number of atoms per period), the second-neighbor
  terms drop out and we obtain $\la L^a \mathbf{u},\mathbf{u}\ra =
  \phi_F''   \|D\mathbf{u}\|^2_{\ell_{\epsilon}^2}$.
\end{proof}

The local QC approximation (QCL) uses the Cauchy--Born extrapolation rule \cite{Ortiz:1995a,Shimokawa:2004},
that is, approximating $y'_{\ell}+y'_{\ell-1}$  in \eqref{AtomEnergy1D} by $2y'_{\ell}$
in our context. Thus, the QCL energy is given by
\begin{equation}\label{QCLEnergy1D}
\mathcal{E}^{qcl}(\mathbf{y})=\epsilon\sum_{\ell=-N+1}^{N}\left[\phi(y'_{\ell})+\phi(2y'_{\ell})\right].
\end{equation}
We can similarly obtain the linearized QCL equilibrium equations about the uniform deformation
\[
\left(L^{qcl}\mathbf{u}^{qcl}\right)_{\ell}=f_{\ell}\quad \text{for}\quad \ell=-N+1,\dots, N,
\]
where the expression of $\left(L^{qcl}\mathbf{v}\right)_{\ell}$ with $\mathbf{v}\in \mathcal{U}$
is
\[
\left(L^{qcl}\mathbf{v}\right)_{\ell}:=
\left(\phi''_{F}+4\phi''_{2F}\right)\frac{\left(-v_{\ell+1}+2v_{\ell}-v_{\ell-1}\right)}{\epsilon^2}.
\]
%%%

\subsection{The Blended QCF Operator}
The blended QCF (B-QCF) operator is obtained through smooth blending
of the atomistic and local QC models. Let $\beta : \mathbb{R} \to
\mathbb{R}$ be a ``smooth'' and $2$-periodic blending function, then
we define
\begin{displaymath}
  F_\ell^{bqcf}(\mathbf{y}) := \beta_\ell F_\ell^a(\mathbf{y}) +
  (1-\beta_\ell) F_\ell^{qcl}(\mathbf{y}),
\end{displaymath}
where $F_\ell^{qcl}$ is defined analogously to $F_\ell^a$ and
$\beta_\ell := \beta(F\epsilon \ell)$. Linearisation about
$\mathbf{y}_F$ yields the linearized B-QCF operator
\[
(L^{bqcf}\mathbf{v})_{\ell}:=\beta_{\ell} (L^a\mathbf{v})_{\ell}+(1-\beta_{\ell})(L^{qcl}\mathbf{v})_{\ell}.
\]
In order to obtain a {\em practical} atomistic-to-continuum coupling
scheme, we would also need to coarsen the continuum region by choosing
a coarser finite element mesh. In the present work we focus
exclusively on the stability of the B-QCF operator, which is a
necessary ingredient in any subsequent analysis of the B-QCF method.

\subsection{Positive-Definiteness of the B-QCF Operator}
We begin by writing $L^{bqcf}$ in the form
$L^{bqcf}=\phi''_{F}L_{1}^{bqcf}+\phi''_{2F}L_{2}^{bqcf}$ where
\begin{align*}
\left(L_{1}^{bqcf}\mathbf{v}\right)_{\ell}=&\epsilon^{-2}\left(-v_{\ell+1}+2v_{\ell}-v_{\ell-1}\right),\quad \text{and}\\
\left(L_{2}^{bqcf}\mathbf{v}\right)_{\ell}=&\beta_{\ell}\epsilon^{-2}\left(-v_{\ell+2}+2v_{\ell}-v_{\ell-2}\right)
+(1-\beta_{\ell})4\epsilon^{-2}\left(-v_{\ell+1}+2v_{\ell}-v_{\ell-1}\right).
\end{align*}
%%%%%%%%%%%%
\begin{lemma}\label{DivformLemma}
For any $\mathbf{u}\in \mathcal{U}$, the nearest neighbor and next-nearest neighbor interaction operator can be written in the
form
\begin{align}\label{Divform}
\begin{split}
\la L^{bqcf}_{1}\mathbf{u},\mathbf{u}\ra=&\|D\mathbf{u}\|_{\ell_{\epsilon}^2}^2,\quad \text{and}\\
\la
L^{bqcf}_{2}\mathbf{u},\mathbf{u}\ra=& \big[ 4\|D\mathbf{u}\|_{\ell_{\epsilon}^2}^2
-\epsilon^{2}\|\sqrt{\beta}D^{(2)}\mathbf{u}\|_{\ell_{\epsilon}^2}^2 \big]
+\mathbf{R}+\mathbf{S}+\mathbf{T},
\end{split}
\end{align}
where the terms $\mathbf{R}$ and $\mathbf{S}$ are given by
\begin{align}\label{SigPart}
\begin{split}
\mathbf{R}=&\sum\limits_{\ell=-N+1}^{N}2\epsilon^3D^{(2)}\beta_{\ell}\left(Du_{\ell}\right)^2 ,
\quad
\mathbf{S}=\sum\limits_{\ell=-N+1}^{N}\epsilon^4 D^{(2)}\beta_{\ell}D^{(2)}u_{\ell}Du_{\ell}\quad \\
&\qquad\qquad\text{and}\quad
\mathbf{T}=\sum\limits_{\ell=-N+1}^{N}\epsilon^3 \left(D^{(3)}\beta_{\ell+1}\right)u_{\ell}Du_{\ell+1}.
\end{split}
\end{align}
\end{lemma}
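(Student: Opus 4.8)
\medskip
\noindent\emph{Proof proposal.}
The plan is to verify \eqref{Divform} by a direct computation using only three tools: summation by parts (the $2N$-periodicity guarantees that no boundary terms ever survive), the discrete product and Leibniz rules for $D$ and $D^{(2)}$, and the elementary identity $a(b-a)=\tfrac12(b^2-a^2)-\tfrac12(b-a)^2$.

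The formula for $L^{bqcf}_1$ is immediate, since $(L^{bqcf}_1\mathbf{u})_\ell=\epsilon^{-2}(-u_{\ell+1}+2u_\ell-u_{\ell-1})$ is the standard discrete Laplacian: one summation by parts gives $\la L^{bqcf}_1\mathbf{u},\mathbf{u}\ra=\epsilon^{-1}\sum_\ell(u_\ell-u_{\ell-1})^2=\|D\mathbf{u}\|_{\ell_{\epsilon}^2}^2$. For $L^{bqcf}_2$ I would first separate the blending-independent part by writing its stencil as $4\epsilon^{-2}(-u_{\ell+1}+2u_\ell-u_{\ell-1})+\beta_\ell\epsilon^{-2}\big[(-u_{\ell+2}+2u_\ell-u_{\ell-2})-4(-u_{\ell+1}+2u_\ell-u_{\ell-1})\big]$; the first summand contributes exactly $4\|D\mathbf{u}\|_{\ell_{\epsilon}^2}^2$ as above. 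The crucial algebraic observation is that $(-u_{\ell+2}+2u_\ell-u_{\ell-2})-4(-u_{\ell+1}+2u_\ell-u_{\ell-1})=-\big(u_{\ell+2}-4u_{\ell+1}+6u_\ell-4u_{\ell-1}+u_{\ell-2}\big)=-\epsilon^2\big(D^{(2)}u_{\ell+1}-2D^{(2)}u_\ell+D^{(2)}u_{\ell-1}\big)$, i.e.\ up to the factor $-\epsilon^2$ this is the discrete Laplacian applied to $D^{(2)}\mathbf{u}$. Hence the remaining contribution is $\mathbf{A}:=-\epsilon\sum_\ell\beta_\ell\big(D^{(2)}u_{\ell+1}-2D^{(2)}u_\ell+D^{(2)}u_{\ell-1}\big)u_\ell$, and since the discrete Laplacian is self-adjoint for the unweighted periodic sum, two summations by parts give the compact form $\mathbf{A}=-\epsilon^3\sum_\ell D^{(2)}(\beta\mathbf{u})_\ell\,D^{(2)}u_\ell$.

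From here I would expand using the discrete Leibniz rule $D^{(2)}(\beta\mathbf{u})_\ell=\beta_{\ell+1}D^{(2)}u_\ell+2(D\beta_{\ell+1})(Du_\ell)+(D^{(2)}\beta_\ell)u_{\ell-1}$, which splits $\mathbf{A}$ into three sums. In the first two I would substitute $\beta_{\ell+1}=\beta_\ell+\epsilon D\beta_{\ell+1}$ and $(Du_\ell)(D^{(2)}u_\ell)=\tfrac{1}{2\epsilon}\big[(Du_{\ell+1})^2-(Du_\ell)^2\big]-\tfrac{\epsilon}{2}(D^{(2)}u_\ell)^2$; the $(D^{(2)}u_\ell)^2$-terms recombine into $-\epsilon^3\sum_\ell\beta_\ell(D^{(2)}u_\ell)^2=-\epsilon^2\|\sqrt{\beta}D^{(2)}\mathbf{u}\|_{\ell_{\epsilon}^2}^2$, while the telescoping remainder $-\epsilon^2\sum_\ell(D\beta_{\ell+1})\big[(Du_{\ell+1})^2-(Du_\ell)^2\big]$, after one more summation by parts, equals $\epsilon^3\sum_\ell(D^{(2)}\beta_\ell)(Du_\ell)^2=\tfrac12\mathbf{R}$. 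In the third sum, writing $u_{\ell-1}=u_\ell-\epsilon Du_\ell$ peels off exactly $\mathbf{S}$, and the leftover $-\epsilon^3\sum_\ell(D^{(2)}\beta_\ell)u_\ell\,D^{(2)}u_\ell$, after one summation by parts and the product rule $D\big((D^{(2)}\beta)\mathbf{u}\big)_\ell=(D^{(2)}\beta_\ell)(Du_\ell)+(D^{(3)}\beta_\ell)u_{\ell-1}$, becomes $\epsilon^3\sum_\ell(D^{(2)}\beta_\ell)(Du_\ell)^2+\epsilon^3\sum_\ell(D^{(3)}\beta_\ell)u_{\ell-1}(Du_\ell)$, which after the index shift $\ell\mapsto\ell+1$ in the last sum is $\tfrac12\mathbf{R}+\mathbf{T}$. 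Adding the two contributions $\tfrac12\mathbf{R}$ (this is where the coefficient $2$ in \eqref{SigPart} comes from) and collecting everything yields \eqref{Divform}.

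I expect the only real difficulty to be organizational rather than conceptual. In particular, the leading term first emerges weighted by $\beta_{\ell+1}$ instead of $\beta_\ell$, and one must check that it is repaired precisely by the $O(\epsilon)$ correction generated when $(Du_\ell)(D^{(2)}u_\ell)$ is rewritten; similarly one has to keep careful track of the index shifts so that the two half-copies of $\mathbf{R}$ and the $D^{(3)}\beta$-term line up exactly with the terms in \eqref{SigPart}.
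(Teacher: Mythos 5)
Your proposal is correct: every identity you invoke (the second-order discrete Leibniz rule $D^{(2)}(\beta\mathbf{u})_\ell=\beta_{\ell+1}D^{(2)}u_\ell+2(D\beta_{\ell+1})(Du_\ell)+(D^{(2)}\beta_\ell)u_{\ell-1}$, the polarization identity for $(Du_\ell)(D^{(2)}u_\ell)$, and the exact cancellation of the two $\epsilon^4\sum(D\beta_{\ell+1})(D^{(2)}u_\ell)^2$ terms that repairs the $\beta_{\ell+1}$-weight) checks out, and the two half-copies of $\mathbf{R}$ together with $\mathbf{S}$ and $\mathbf{T}$ land exactly on \eqref{SigPart}. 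The strategy is the same as the paper's --- a purely algebraic verification by repeated summation by parts with no boundary terms --- but your intermediate bookkeeping differs: the paper never symmetrizes, instead peeling off one difference at a time to reach the asymmetric form $\epsilon^3\sum_\ell D^{(3)}u_\ell\,[\beta_\ell Du_\ell+u_{\ell-1}D\beta_\ell]$ and then massaging the residual by further first-order summations by parts, whereas you pass immediately to the self-adjoint form $-\epsilon^3\sum_\ell D^{(2)}(\beta\mathbf{u})_\ell\,D^{(2)}u_\ell$ and expand with a single second-order product rule. Your route makes the origin of the coercive term $-\epsilon^2\|\sqrt{\beta}D^{(2)}\mathbf{u}\|_{\ell_{\epsilon}^2}^2$ and of the factor $2$ in $\mathbf{R}$ more transparent (two separate half-copies), at the cost of having to verify the second-order Leibniz rule; the paper's route uses only the stated first-order summation-by-parts lemma but requires more index-shifting. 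Either way the error terms obtained are identical, so the subsequent estimates of Lemma~\ref{SigEstLemma} apply unchanged.
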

\begin{proof} Since the proof of the first identity of
Lemma~\ref{DivformLemma} is not difficult, we only prove the
identity for $L^{bqcf}_{2}$. The main tool used here is the
summation by parts formula. We note that
\begin{align}
%\begin{split}
\la L^{bqcf}_{2}\mathbf{u},\mathbf{u}\ra
=&\sum\limits_{\ell=-N+1}^{N} \epsilon \beta_{\ell} \frac{\left(-u_{\ell+2}+2u_{\ell}-u_{\ell-2}\right)}{\epsilon^2}u_{\ell}
+\epsilon(1-\beta_{\ell})\frac{4\left( -u_{\ell+1}+2u_{\ell}-u_{\ell-1}\right)}{\epsilon^2}u_{\ell}\nonumber\\
=&\sum\limits_{\ell=-N+1}^{N}\epsilon\frac{4\left(-u_{\ell+1}+2u_{\ell}-u_{\ell-1}\right)}{\epsilon^2}u_{\ell}\nonumber\\
&\qquad\qquad+\sum\limits_{\ell=-N+1}^{N}\epsilon \beta_{\ell}\frac{\left(-u_{\ell+2}+4u_{\ell+1}-6u_{\ell}+4u_{\ell-1}-u_{\ell-2}\right)}{\epsilon^2}
u_{\ell}\nonumber\\
=&4\|D\mathbf{u}\|^2_{\ell_{\epsilon}^2}
+\sum\limits_{\ell=-N+1}^{N}\epsilon^2\beta_{\ell} \left(-D^{(3)}u_{\ell+1}+D^{(3)}u_{\ell}\right)u_{\ell}.\label{Divlemeq1}
%\end{split}
\end{align}
%%%%%%%%%%%%
We then apply the summation by parts formula to the second term of \eqref{Divlemeq1} to obtain
\[
\begin{split}
&\sum\limits_{\ell=-N+1}^{N}\beta_{\ell}\epsilon^2 \left(-D^{(3)}u_{\ell+1}+D^{(3)}u_{\ell}\right)u_{\ell}\\
&\qquad=\sum\limits_{\ell=-N+1}^{N}\epsilon^2 D^{(3)}u_{\ell+1}\left[\beta_{\ell+1}u_{\ell+1}-\beta_{\ell}u_{\ell}\right]
=\sum\limits_{\ell=-N+1}^{N}\epsilon^3 D^{(3)}u_{\ell}\left[\beta_{\ell}Du_{\ell}+u_{\ell-1}D\beta_{\ell}\right].
\end{split}
\]
We use the summation by parts formula again and change the index
according to the periodicity so that we get
\begin{align}
%\begin{split}
\sum\limits_{\ell=-N+1}^{N}&\epsilon^3
D^{(3)}u_{\ell}\left[\beta_{\ell}Du_{\ell}+u_{\ell-1}D\beta_{\ell}\right]\nonumber\\
&= \sum_{\ell=-N+1}^{N}\epsilon^2
\left(\beta_{\ell}Du_{\ell}\right)\left(D^{(2)}u_{\ell}-D^{(2)}u_{\ell-1}\right)
+\sum\limits_{\ell=-N+1}^{N}\epsilon^3
\left(D^{(3)}u_{\ell}\right)\,u_{\ell-1}D\beta_{\ell}\nonumber\\
&= \sum\limits_{\ell=-N+1}^{N}\epsilon^2
\left(-D^{(2)}u_{\ell}\right)\left(\beta_{\ell+1}Du_{\ell+1}-\beta_{\ell}Du_{\ell}\right)
+\sum\limits_{\ell=-N+1}^{N}\epsilon^3
\left(D^{(3)}u_{\ell}\right)\,u_{\ell-1}D\beta_{\ell}\nonumber\\
&= \sum\limits_{\ell=-N+1}^{N}\epsilon^2
\left(-D^{(2)}u_{\ell}\right)\left[\beta_{\ell+1}Du_{\ell+1}-\beta_{\ell}Du_{\ell+1}+\beta_{\ell}Du_{\ell+1}-\beta_{\ell}Du_{\ell}\right]
\nonumber\\&\qquad\qquad +\sum\limits_{\ell=-N+1}^{N}\epsilon^3
\left(D^{(3)}u_{\ell}\right)\,u_{\ell-1}D\beta_{\ell}\nonumber\\
 &= -\epsilon^2
\|\sqrt{\beta}D^{(2)}\mathbf{u}\|^2_{\ell_{\epsilon}^2} +
\sum\limits_{\ell=-N+1}^{N}\epsilon^3\left[-D^{(2)}u_{\ell-1}D\beta_{\ell}Du_{\ell}+
D^{(3)}u_{\ell}\,u_{\ell-1}D\beta_{\ell}\right].\label{Divlemeq2}
%\end{split}
\end{align}
We now focus on the second term of \eqref{Divlemeq2}. We
repeatedly use the summation by parts formula to obtain
\begin{align*}
\sum\limits_{\ell=-N+1}^{N}&\epsilon^3\left[-D^{(2)}u_{\ell-1}D\beta_{\ell}Du_{\ell}+
\left(D^{(3)}u_{\ell}\right)\,u_{\ell-1}D\beta_{\ell}\right]\\
%&=\sum\limits_{\ell=-N+1}^{N}-\epsilon^2D\beta_{\ell}Du_{\ell}\left(Du_{\ell}-Du_{\ell-1}\right)
%+\sum\limits_{\ell=-N+1}^{N}\epsilon^2D\beta_{\ell}u_{\ell-1}\left(D^{(2)}u_{\ell}-D^{(2)}u_{\ell-1}\right)\\
&=
\sum\limits_{\ell=-N+1}^{N}-\epsilon^2D\beta_{\ell}\left[\left(Du_{\ell}\right)^2-\left(Du_{\ell-1}\right)^2\right]\\
&\qquad+\sum\limits_{\ell=-N+1}^{N}\epsilon^2D\beta_{\ell}\left[\left(Du_{\ell}-Du_{\ell-1}\right)Du_{\ell-1}
+\left(D^{(2)}u_{\ell}-D^{(2)}u_{\ell-1}\right)u_{\ell-1}\right]\\
&= \sum\limits_{\ell=-N+1}^{N}\epsilon^3D^{(2)}\beta_{\ell}\left(Du_{\ell}\right)^2
+\sum\limits_{\ell=-N+1}^{N}\epsilon^2D\beta_{\ell}\left[u_{\ell-1}D^{(2)}u_{\ell}-u_{\ell-2}D^{(2)}u_{\ell-1}\right]\\
%&=\sum\limits_{\ell=-N+1}^{N}2\epsilon^3 \left(Du_{\ell}\right)^2
%D^{(2)}\beta_{\ell}
%-\sum\limits_{\ell=-N+1}^{N}\epsilon^3D^{(2)}\beta_{\ell}\left[u_{\ell-1}D^{(2)}u_{\ell}-u_{\ell}D^{(2)}u_{\ell}+(Du_{\ell})^2\right]\\
&=\sum\limits_{\ell=-N+1}^{N}2\epsilon^3 D^{(2)}\beta_{\ell}\left(Du_{\ell}\right)^2
+\sum\limits_{\ell=-N+1}^{N}\epsilon^4 D^{(2)}\beta_{\ell}D^{(2)}u_{\ell}Du_{\ell}
+\sum\limits_{\ell=-N+1}^{N}\epsilon^3 \left(D^{(3)}\beta_{\ell+1}\right)u_{\ell}Du_{\ell+1}\\
&=\mathbf{R}+\mathbf{S}+\mathbf{T},
\end{align*}
where $\mathbf{R}$, $\mathbf{S}$ and $\mathbf{T}$ are defined in \eqref{SigPart}.

Combining all of the above equalities, we obtain
\eqref{Divform}.
\end{proof}

We shall see below that the first group in \eqref{Divform} does not
negatively affect the stability of the B-QCF operator. By contrast,
the three terms ${\bf R}$, ${\bf S}$, ${\bf T}$ should be considered
``error terms''.  We estimate them in the next lemma.

In order to proceed with the analysis we define
\begin{displaymath}
  \mathcal{I}:=\big\{\ell \in \mathbb{Z} : 0 < \beta_{\ell+j} < 1
  \text{ for some } j \in \{\pm 1, \pm 2\} \big\},
\end{displaymath}
so that $D^{(j)}\beta_\ell = 0$ for all $\ell \in \{-N+1,\dots N\}
\setminus \mathcal{I}$ and $j \in \{1,2,3\}$, and
$K:=\sharp\mathcal{I}$.

\begin{lemma}
  \label{SigEstLemma}
  Let $\mathbf{R}$, $\mathbf{S}$ and $\mathbf{T}$ be defined by
  \eqref{SigPart}, then we have the following estimates:
  \begin{equation}\label{SigEst}
    \begin{split}
      |\mathbf{R}|\le~&
      \epsilon^{2}\|D^{(2)}\beta\|_{\ell_{\epsilon}^{\infty}}\|D\mathbf{u}\|^2_{\ell_{\epsilon}^2},
      \\
      |{\bf S}| \leq~& 2\epsilon^2
    \|D^{(2)}\beta\|_{\ell_{\epsilon}^{\infty}}\|D\mathbf{u}\|^2_{\ell_{\epsilon}^2},
    \quad \text{and} \\
      |\mathbf{T}| \le~&
      \epsilon^2 \sqrt{2}(K\epsilon)^{1/2} \|D^{(3)}\beta\|_{\ell_{\epsilon}^{\infty}} \,
      \|D\mathbf{u}\|^2_{\ell_{\epsilon}^2}.
    \end{split}
  \end{equation}
\end{lemma}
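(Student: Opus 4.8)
The plan is to estimate the three ``error terms'' $\mathbf{R}$, $\mathbf{S}$, $\mathbf{T}$ one at a time, bounding each by $\|D\mathbf{u}\|^2_{\ell_\epsilon^2}$ with an explicit $\epsilon$-dependent constant. The only tools needed are H\"older's and the Cauchy--Schwarz inequality, the elementary inverse estimate $\|D^{(2)}\mathbf{u}\|_{\ell_\epsilon^2}\le 2\epsilon^{-1}\|D\mathbf{u}\|_{\ell_\epsilon^2}$ (immediate from $D^{(2)}u_\ell=\epsilon^{-1}(Du_{\ell+1}-Du_\ell)$ and the triangle inequality), the identity $\epsilon\sum_\ell(Du_\ell)^2=\|D\mathbf{u}\|^2_{\ell_\epsilon^2}$, and --- for $\mathbf{T}$ only --- the discrete Poincar{\'e} inequality together with the normalisation $\epsilon=1/N$. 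For $\mathbf{R}$ one pulls $\|D^{(2)}\beta\|_{\ell_\epsilon^\infty}$ out of the (sign-definite) sum over $\ell$ and uses the identity above; this gives the estimate for $\mathbf{R}$ directly. For $\mathbf{S}$ one uses H\"older to extract $\|D^{(2)}\beta\|_{\ell_\epsilon^\infty}$, then Cauchy--Schwarz on $\sum_\ell|D^{(2)}u_\ell|\,|Du_\ell|$ (which produces $\epsilon^{-1}\|D^{(2)}\mathbf{u}\|_{\ell_\epsilon^2}\|D\mathbf{u}\|_{\ell_\epsilon^2}$), and finally the inverse estimate to trade $\|D^{(2)}\mathbf{u}\|_{\ell_\epsilon^2}$ for $2\epsilon^{-1}\|D\mathbf{u}\|_{\ell_\epsilon^2}$; collecting the powers of $\epsilon$ against the prefactor $\epsilon^4$ yields the stated $2\epsilon^2\|D^{(2)}\beta\|_{\ell_\epsilon^\infty}\|D\mathbf{u}\|^2_{\ell_\epsilon^2}$.

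The term $\mathbf{T}$ carries the real content, because here one must exploit the localisation of $D^{(3)}\beta$ to $\mathcal{I}$. After reindexing, $\mathbf{T}=\sum_\ell\epsilon^3(D^{(3)}\beta_\ell)\,u_{\ell-1}\,Du_\ell$, and the definition of $\mathcal{I}$ shows that only the $K=\sharp\mathcal{I}$ indices $\ell\in\mathcal{I}$ contribute. I would bound $|\mathbf{T}|\le\epsilon^3\|D^{(3)}\beta\|_{\ell_\epsilon^\infty}\sum_{\ell\in\mathcal{I}}|u_{\ell-1}|\,|Du_\ell|$ and then apply Cauchy--Schwarz to this restricted sum: the $u$-part is at most $\big(\sum_{\ell\in\mathcal{I}}|u_{\ell-1}|^2\big)^{1/2}\le K^{1/2}\|\mathbf{u}\|_{\ell_\epsilon^\infty}$, while the $Du$-part, after enlarging the restricted sum to a full period, is at most $\epsilon^{-1/2}\|D\mathbf{u}\|_{\ell_\epsilon^2}$. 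It then remains to control $\|\mathbf{u}\|_{\ell_\epsilon^\infty}$: the discrete Poincar{\'e} inequality gives $\|\mathbf{u}\|_{\ell_\epsilon^\infty}\le\|D\mathbf{u}\|_{\ell_\epsilon^1}$, and one further Cauchy--Schwarz over the $2N=2/\epsilon$ lattice points yields $\|D\mathbf{u}\|_{\ell_\epsilon^1}\le\sqrt{2}\,\|D\mathbf{u}\|_{\ell_\epsilon^2}$. Assembling the factors, $\epsilon^3\cdot K^{1/2}\cdot\epsilon^{-1/2}\cdot\sqrt{2}=\sqrt{2}\,\epsilon^2(K\epsilon)^{1/2}$, which is exactly the asserted bound.

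The main obstacle --- indeed the only non-routine point --- is the handling of $\mathbf{T}$. One must not estimate it by the cruder route that discards the information $D^{(3)}\beta_\ell=0$ for $\ell\notin\mathcal{I}$: that would replace $K$ by the full lattice size $2N=2/\epsilon$ and destroy the $(K\epsilon)^{1/2}$ scaling, on which the sharp blending-width condition of the subsequent analysis rests. Moreover, unlike $\mathbf{R}$ and $\mathbf{S}$ (which involve only first and second differences of $\mathbf{u}$), the term $\mathbf{T}$ contains an undifferentiated factor $u_{\ell-1}$; this forces the use of the Poincar{\'e} inequality rather than absorption into a discrete derivative, and getting the interplay of the $\ell_\epsilon^\infty$-bound on $\mathbf{u}$, the $\ell_\epsilon^1$-to-$\ell_\epsilon^2$ conversion, and the cardinality $K$ to combine into precisely $\epsilon^2(K\epsilon)^{1/2}$ is where care with the powers of $\epsilon$ (all traceable to $\epsilon=1/N$) is required. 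Everything else is bookkeeping.
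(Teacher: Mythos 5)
Your proposal is correct and follows essentially the same route as the paper's proof: H\"older for $\mathbf{R}$, H\"older plus Cauchy--Schwarz plus the inverse estimate $\|D^{(2)}\mathbf{u}\|_{\ell_\epsilon^2}\le 2\epsilon^{-1}\|D\mathbf{u}\|_{\ell_\epsilon^2}$ for $\mathbf{S}$, and for $\mathbf{T}$ the localisation to $\mathcal{I}$ followed by Cauchy--Schwarz, the discrete Poincar\'e inequality, and the $\ell_\epsilon^1$-to-$\ell_\epsilon^2$ conversion using $2N\epsilon=2$, yielding exactly the paper's chain $\|\mathbf{u}\|^2_{\ell_\epsilon^2(\mathcal{I})}\le K\epsilon\,\|\mathbf{u}\|^2_{\ell_\epsilon^\infty}\le 2K\epsilon\,\|D\mathbf{u}\|^2_{\ell_\epsilon^2}$. (You also inherit the paper's own harmless looseness on $\mathbf{R}$, where the definition's prefactor $2\epsilon^3$ really gives $2\epsilon^2$ rather than $\epsilon^2$ in \eqref{SigEst}; this affects only an absolute constant.)
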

\begin{proof}
  The estimate for ${\bf R}$ follows directly from H{\"o}lder's
  inequality.

  To estimate ${\bf S}$ recall that
  $D^{(2)}u_{\ell}:=\frac{Du_{\ell+1}-Du_{\ell}}{\epsilon}$, which
  implies
  \[
  \|D^{(2)}\mathbf{u}\|^2_{\ell_{\epsilon}^2}\le
  \frac{4}{\epsilon^2}\|D\mathbf{u}\|^2_{\ell_{\epsilon}^2}.
  \]
  Therefore, ${\bf S}$ is bounded by
  \begin{align*}
    |{\bf S}| = \left| \sum\limits_{\ell=-N+1}^{N} \epsilon^4
      D^{(2)}\beta_{\ell}D^{(2)}u_{\ell}Du_{\ell}\right| \le\epsilon^3
    \|D^{(2)}\beta\|_{\ell_{\epsilon}^{\infty}}\|D^{(2)}\mathbf{u}\|_{\ell_{\epsilon}^2}
    \|D\mathbf{u}\|_{\ell_{\epsilon}^2}\le 2\epsilon^2
    \|D^{(2)}\beta\|_{\ell_{\epsilon}^{\infty}}\|D\mathbf{u}\|^2_{\ell_{\epsilon}^2}.
  \end{align*}

  Finally, we estimate ${\bf T}$ by
\[
|{\bf T}| = \left|\sum\limits_{\ell=-N+1}^{N}\epsilon^3D^{(3)}\beta_{\ell+1}Du_{\ell+1}\,u_{\ell}\right|
\le \epsilon^2
\|D^{(3)}\beta\|_{\ell_{\epsilon}^{\infty}}\|\mathbf{u}\|_{\ell_{\epsilon}^2(\mathcal{I})}\|D\mathbf{u}\|_{\ell_{\epsilon}^2},
\]
We then apply the H{\"o}lder inequality, the Poincar{\'e} inequality
and Jensen's inequality successively to
$\|\mathbf{u}\|_{\ell_{\epsilon}^2(\mathcal{I})}$ to get
\begin{equation*}
\|\mathbf{u}\|^2_{\ell_{\epsilon}^2(\mathcal{I})} \le
(K\epsilon)\|\mathbf{u}\|^2_{\ell_{\epsilon}^{\infty}}
\le K\epsilon \|D\mathbf{u}\|^2_{\ell_{\epsilon}^{1}} \le 2K\epsilon
\|D\mathbf{u}\|^2_{\ell_{\epsilon}^{2}}.
\end{equation*}
Therefore, we have
\[
\left|{\bf T}\right|
\le \epsilon^2
\|D^{(3)}\beta\|_{\ell_{\epsilon}^{\infty}}\|\mathbf{u}\|_{\ell_{\epsilon}^2(\mathcal{I})}\|D\mathbf{u}\|_{\ell_{\epsilon}^2}
\le\sqrt{2} \epsilon^2
\|D^{(3)}\beta\|_{\ell_{\epsilon}^{\infty}}\left(K\epsilon\right)^{1/2}\|D\mathbf{u}\|^2_{\ell_{\epsilon}^2}.
\]
Combining the above estimates, we have proven the second
inequality in \eqref{SigEst}.
\end{proof}
%%%%%%%%%%%%%%%%%

We see from the previous result that smoothness of $\beta$ crucially
enters the estimates on the error terms ${\bf R}$, ${\bf S}$, ${\bf
  T}$. Before we state our main result in 1D we show how quasi-optimal
blending functions can be constructed to minimize these terms, which
will require us to introduce the {\em blending width} into the
analysis. The estimate \eqref{eq:BlendFunEst_upper} is stated for a
single connected interface region, however, an analogous result holds
if the interface has connected components with comparable width. A
similar result can also be found in \cite{bqce11}.

\begin{lemma}\label{BlendFunEstLemma}
  \begin{itemize}
  \item[(i)] Suppose that the blending region is connected, that is
    $\mathcal{I} = \{ 1, \dots, K\}$ without loss of generality, then
    $\beta$ can be chosen such that
    \begin{equation}
      \label{eq:BlendFunEst_upper}
      \| D^{(j)} \beta \|_{\ell^\infty} \leq C_\beta (K \epsilon)^{-j},
      \quad \text{for } j = 1, 2, 3,
    \end{equation}
    where $C_\beta$ is independent of $K$ and $\epsilon$.

  \item[(ii)] This estimate is sharp in sense that, if $\beta_\ell$
    attains both the values $0$ and $1$, then
    \begin{equation}\label{BlendFunEst}
      \|D^{(j)}\beta\|_{\ell^{\infty}}\ge (K\epsilon)^{-j},\quad
      \text{for } j=1,2,3.
    \end{equation}

  \item[(iii)] Suppose that $\mathcal{J} = \{1, \dots, n\} \subset
    \mathcal{I}$ such that $\beta(1) = 0$, $\beta(n) = 1$ (or
    vice-versa), and $0, n+1 \notin \mathcal{I}$, and suppose moreover
    that \eqref{eq:BlendFunEst_upper} is satisfied, then
    \begin{equation}
      \label{eq:blend_interval}
      \# \big\{ \ell \in \mathcal{J} : D^{(3)}\beta_\ell \leq
      -\smfrac12 (\epsilon K)^{-3} \big\}
      \geq \smfrac{1}{2C_\beta} K.
    \end{equation}
 \end{itemize}
\end{lemma}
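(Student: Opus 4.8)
The plan is to treat the three parts in turn. For (i), I would exhibit a good $\beta$ directly: fix once and for all a profile $\eta\in C^{3}(\Real)$ with $\eta\equiv 0$ on $(-\infty,0]$, $\eta\equiv 1$ on $[1,\infty)$ and $0\le\eta\le1$, and set $\beta_{\ell}:=\eta\big((\ell-a)/b\big)$, periodically extended, where $a$ and $b$ are chosen as $O(1)$ modifications of $0$ and $K$ so that $\beta$ is fractional precisely on the interior of $\mathcal I=\{1,\dots,K\}$ and is locally constant ($0$, resp.\ $1$) at and beyond its two endpoints. Writing $D^{(j)}\beta_{\ell}=\eps^{-j}\Delta^{(j)}\beta_{\ell}$ with $\Delta^{(j)}$ the $j$-th unit-spacing finite difference, the mean-value form of the divided difference (equivalently, Taylor expansion) gives $|\Delta^{(j)}\beta_{\ell}|\le b^{-j}\,\|\eta^{(j)}\|_{L^{\infty}}$, hence $\|D^{(j)}\beta\|_{\ell^{\infty}}\le \|\eta^{(j)}\|_{L^{\infty}}(b\eps)^{-j}\le C_{\beta}(K\eps)^{-j}$ since $b$ and $K$ differ by a fixed additive constant. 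Thus \eqref{eq:BlendFunEst_upper} holds with $C_{\beta}$ depending only on $\eta$.

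For (ii), assume $\beta$ attains both $0$ and $1$; being continuous it also attains a value in $(0,1)$, so $\mathcal I\neq\emptyset$. Walking forward from a site where $\beta=0$ to the first site where $\beta=1$ (restarting whenever another $0$ is met) isolates a transition block $\{\ell_{*},\dots,\ell_{*}+m\}$ with $\beta_{\ell_{*}}=0$, $\beta_{\ell_{*}+m}=1$ and $\beta_{j}\in(0,1)$ for $\ell_{*}<j<\ell_{*}+m$. Since $\mathcal I$ contains the $\pm1$-neighbours of every fractional site, $\mathcal I\supseteq\{\ell_{*},\dots,\ell_{*}+m\}$, so $K\ge m+1>m$. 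Telescoping gives $\eps\sum_{\ell=\ell_{*}+1}^{\ell_{*}+m}|D\beta_{\ell}|\ge|\beta_{\ell_{*}+m}-\beta_{\ell_{*}}|=1$, hence $\|D\beta\|_{\ell^{\infty}}\ge(m\eps)^{-1}>(K\eps)^{-1}$. For $j=2$: $D\beta_{\ell_{*}}\le0$ (since $\beta\ge0$, $\beta_{\ell_{*}}=0$) while $D\beta\ge(m\eps)^{-1}$ somewhere in the block; these two sites are $\le m$ apart and $D^{(2)}\beta$ is the per-$\eps$ increment of $D\beta$, so $\max D^{(2)}\beta\ge(m\eps)^{-2}$, and symmetrically, using $\beta\le1$ near $\beta=1$, $\min D^{(2)}\beta\le-(m\eps)^{-2}$; hence $\|D^{(2)}\beta\|_{\ell^{\infty}}>(K\eps)^{-2}$. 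Repeating the same argument one level up, with $D^{(2)}\beta$ in place of $D\beta$ (it changes by $\ge 2(m\eps)^{-2}$ over $\le m$ sites), yields $\|D^{(3)}\beta\|_{\ell^{\infty}}\ge 2(m\eps)^{-3}>(K\eps)^{-3}$.

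For (iii), assume $\beta(1)=0$, $\beta(n)=1$ (the reversed case is identical, as the argument uses only $|\beta(n)-\beta(1)|=1$ together with flatness at the ends of $\mathcal J$); note $n\le K$. From $0,n+1\notin\mathcal I$ and the vanishing of $D^{(j)}\beta$ off $\mathcal I$ one deduces $\beta_{0}=\beta_{1}=0$ and $\beta_{n}=\beta_{n+1}=1$, so the rescaled differences $\Delta^{(j)}\beta:=\eps^{j}D^{(j)}\beta$ are supported in $\{1,\dots,n\}$, $\{1,\dots,n\}$, $\{1,\dots,n+1\}$ for $j=1,2,3$, with telescoped sums $\sum\Delta^{(1)}\beta=1$, $\sum\Delta^{(2)}\beta=0$, $\sum\Delta^{(3)}\beta=0$. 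A pigeonhole on the first identity gives $\max|\Delta^{(1)}\beta|\ge1/n$; since $\Delta^{(1)}\beta$ vanishes at the ends, its total variation $\sum|\Delta^{(2)}\beta|\ge2/n$, and with $\sum\Delta^{(2)}\beta=0$ this forces $\max\Delta^{(2)}\beta\ge1/n^{2}$ and $\min\Delta^{(2)}\beta\le-1/n^{2}$; as $\Delta^{(2)}\beta$ also vanishes at the ends, its total variation $\sum|\Delta^{(3)}\beta|\ge4/n^{2}$, whence $\sum_{\ell}(\Delta^{(3)}\beta_{\ell})_{-}\ge2/n^{2}\ge2/K^{2}$, where $(x)_{-}:=\max(-x,0)$; this negative part is carried by $\mathcal J$, since the two extra sites $1$ and $n+1$ have $\Delta^{(3)}\beta\ge0$. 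Finally split $\mathcal J$: with $\mathcal N:=\{\ell\in\mathcal J:\Delta^{(3)}\beta_{\ell}\le-\smfrac12 K^{-3}\}$ — exactly the set in \eqref{eq:blend_interval}, since $\Delta^{(3)}\beta_{\ell}=\eps^{3}D^{(3)}\beta_{\ell}$ — the complement contributes $<n\cdot\smfrac12 K^{-3}\le\smfrac12 K^{-2}$ to $\sum_{\ell}(\Delta^{(3)}\beta_{\ell})_{-}$, while $\mathcal N$ contributes $\le\#\mathcal N\cdot\|\Delta^{(3)}\beta\|_{\ell^{\infty}}\le\#\mathcal N\,C_{\beta}K^{-3}$ by part (i). Hence $2K^{-2}\le \#\mathcal N\,C_{\beta}K^{-3}+\smfrac12 K^{-2}$, i.e.\ $\#\mathcal N\ge\smfrac{3}{2C_{\beta}}K\ge\smfrac{1}{2C_{\beta}}K$, which is \eqref{eq:blend_interval}.

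The step I expect to be most delicate is the support/flatness bookkeeping underpinning (iii) (and the inclusion $\mathcal I\supseteq\{\ell_{*},\dots,\ell_{*}+m\}$, hence $K\ge m+1$, in (ii)): converting the purely set-theoretic hypotheses ``$0,n+1\notin\mathcal I$'' and ``$\mathcal J\subset\mathcal I$'' into the precise facts that $\beta$ is locally constant just outside $\mathcal J$, that the rescaled differences have the stated supports and telescoped sums, and that the negative part of $\Delta^{(3)}\beta$ genuinely lives on $\mathcal J$. Once these are in place, everything else is iterated pigeonhole-plus-total-variation estimation. A minor additional point is keeping $C_{\beta}$ in (i) independent of $K$ when $K$ is small, which is the reason for fixing $\eta$ before scaling.
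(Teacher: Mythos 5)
Your proposal is correct and follows essentially the same route as the paper: part (i) by rescaling a fixed smooth reference profile, part (ii) by iterated telescoping-plus-pigeonhole on successive differences, and part (iii) by splitting $\mathcal{J}$ according to whether $D^{(3)}\beta_\ell \leq -\smfrac{1}{2}(\epsilon K)^{-3}$ and using the upper bound \eqref{eq:BlendFunEst_upper} to control the contribution of the ``very negative'' set. The only cosmetic difference is in (iii), where you derive the lower bound $\sum_\ell \big(\epsilon^3 D^{(3)}\beta_\ell\big)_- \gtrsim K^{-2}$ from a total-variation argument on the rescaled differences, whereas the paper telescopes $\beta'''$ up to the first index at which $\beta'' \leq -(\epsilon K)^{-2}$ (whose existence is supplied by part (ii)); both give the same conclusion, yours with a marginally better constant, and your flagged bookkeeping concerns (local constancy of $\beta$ just outside $\mathcal{J}$ and the supports of the rescaled differences) are glossed over to a comparable degree in the paper's own proof.
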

\begin{proof}
  {\it (i) } The upper bound follows by fixing a reference blending
  function $B \in C^3(\mathbb{R})$, $B = 0$ in $(-\infty, 0]$ and $B =
  1$ in $[1, +\infty)$, and defining $\beta(x) = B((x-2\epsilon) /
  (\epsilon K'))$ for $K' = K-4$. Then $\mathcal{I} = \{1, \dots,
  K\}$, and a scaling argument immediately gives
  \eqref{eq:BlendFunEst_upper}.

  {\it (ii) } To prove the lower bound, suppose $0 < \beta_\ell < 1$
  for $\ell = 1, \dots, K_0-1$, and $\beta_0 = 0$ and $\beta_{K_0} =
  1$. Then $\epsilon \sum_{\ell = 1}^{K_0} \beta_\ell' = 1$, from
  which infer the existence of $K_1 \in \{1, \dots, K_0\}$ such that
  $\beta_{K_1}' \geq 1 / (\epsilon K_0)$. This establishes the lower
  bound for $j = 1$. To prove it for $j = 2$ we note that, since
  $\beta_{K_0} = 1$, $\beta_{K_0+1}' \leq 0$, and hence we obtain
  \begin{displaymath}
    \epsilon \sum_{\ell = K_1+1}^{K_0} \beta_\ell'' = \beta_{K_0+1}' -
    \beta_{K_1}' \leq - 1 / (\epsilon K_0).
  \end{displaymath}
  We deduce that there exists $K_2$
  % such that $\beta_{K_2}'' \leq - 1
  % / (\epsilon^2 K_0 K_1)$
  such that $\beta_{K_2}'' \leq - 1 / (\epsilon^2 K_0 (K_0-K_1))\le
  -1/(\epsilon K)^2$.  This implies \eqref{BlendFunEst} for $j =
  2$. We can argue similarly to obtain the result for $j = 3$.

  {\it (iii) } Finally, to establish \eqref{eq:blend_interval}, let $m
  \in \mathbb{N}$ be chosen minimally such that $\beta_m'' \leq -
  (\epsilon K)^{-2}$ and $\beta_0'' = 0$; then $m \leq n$ and we have
  \begin{displaymath}
    - \frac{1}{(\epsilon K)^2} \geq \beta_m'' - \beta_0'' = \epsilon
    \sum_{\ell = 1}^{m} \beta_\ell''' \geq - \frac{\epsilon k
      C_\beta}{(\epsilon K)^3} - \frac{\epsilon(m - k)}{2 (\epsilon K)^3},
  \end{displaymath}
  where $k := \#\{ \ell \in \mathcal{J} : \beta_\ell''' \leq - \frac12
  (\epsilon K)^{-3} \}$. Rearranging the inequality, we obtain
  \begin{displaymath}
    - \frac{1}{2 (\epsilon K)^2} \geq
    - \frac{1}{(\epsilon K)^2} +  \frac{\epsilon(m - k)}{2 (\epsilon
      K)^3}
    \geq - \frac{\epsilon k
      C_\beta}{(\epsilon K)^3} \geq - \frac{k C_\beta}{K (\epsilon K)^2},
  \end{displaymath}
  and we immediately deduce that $k / K \geq 1 / (2C_\beta)$, which
  concludes the proof of item (iii).
\end{proof}

We can summarize the previous estimates and get the following optimal
condition for the size $K$ of the blending region provided that
$\beta$ is chosen in a quasi-optimal way. Formally, the result states
that $L^{bqcf}$ is positive definite if and only if $K \gg
\epsilon^{-1/5}$. In particular, we conclude that the B-QCF operator
is positive definite for fairly moderate blending widths.

\begin{theorem}\label{BlendSizeThm}
  Let $\mathcal{I}$ and $K$ be defined as in
  Lemma~\ref{BlendFunEstLemma}, and suppose that $\beta$ is chosen to
  satisfy the upper bound \eqref{eq:BlendFunEst_upper}. Then there
  exists a constant $C_1 = C_1(C_\beta)$, %$j = 1, 2$,
  such that
  \begin{equation}
    \label{eq:1d_coerc_lower}
    \langle L^{bqcf} {\bf u}, {\bf u} \rangle \geq \big(c_0 - C_1
    |\phi_{2F}''| \big[ K^{-5/2} \epsilon^{-1/2}\big]\big) \|D {\bf
      u}\|_{\ell^2_\epsilon}^2
    \qquad \forall {\bf u} \in \mathcal{U},
  \end{equation}
  where $c_0 = \min(\phi_F'', \phi_F'' + 4 \phi_{2F}'')$ is the
  atomistic stability constant %; cf. Lemma \ref{th:stab_atm}.
of Lemma \ref{th:stab_atm}.

  Moreover, if $\beta_\ell$ takes both the values $0$ and $1$, then
  there exist constants $C_2, C_3 > 0$, independent of $\mathcal{I}$,
  $N$, $\phi_F''$ and $\phi_{2F}''$, such that
  \begin{equation}
    \label{eq:1d_coerc_upper}
    \inf_{\substack{{\bf u} \in \mathcal{U} \\ \|D{\bf
          u}\|_{\ell^2_\epsilon} = 1}} \langle L^{bqcf} {\bf u}, {\bf
      u} \rangle \leq \phi_F'' + C_2 |\phi_{2F}''| - C_3 |\phi_{2F}''|
    \big[ K^{-5/2} \epsilon^{-1/2} \big].
  \end{equation}
\end{theorem}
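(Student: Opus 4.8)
\emph{The lower bound \eqref{eq:1d_coerc_lower}.} The plan is to combine the three preceding lemmas directly. Writing $L^{bqcf}=\phi_F''L_1^{bqcf}+\phi_{2F}''L_2^{bqcf}$ and inserting Lemma~\ref{DivformLemma},
\begin{equation*}
\langle L^{bqcf}\mathbf u,\mathbf u\rangle
=\phi_F''\|D\mathbf u\|^2_{\ell^2_\epsilon}
+\phi_{2F}''\Big(\big[4\|D\mathbf u\|^2_{\ell^2_\epsilon}-\epsilon^2\|\sqrt\beta\,D^{(2)}\mathbf u\|^2_{\ell^2_\epsilon}\big]+\mathbf R+\mathbf S+\mathbf T\Big).
\end{equation*}
Since $0\le\beta_\ell\le1$ and $\|D^{(2)}\mathbf u\|^2_{\ell^2_\epsilon}\le4\epsilon^{-2}\|D\mathbf u\|^2_{\ell^2_\epsilon}$, the square bracket lies in $[0,4\|D\mathbf u\|^2_{\ell^2_\epsilon}]$; distinguishing the cases $\phi_{2F}''\ge0$ and $\phi_{2F}''<0$ and using $c_0=\min(\phi_F'',\phi_F''+4\phi_{2F}'')$, the first two summands together are $\ge c_0\|D\mathbf u\|^2_{\ell^2_\epsilon}$. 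For the error terms I would bound $\mathbf R,\mathbf S,\mathbf T$ via Lemma~\ref{SigEstLemma} and then insert \eqref{eq:BlendFunEst_upper}; this gives $|\mathbf R|,|\mathbf S|\le CK^{-2}\|D\mathbf u\|^2_{\ell^2_\epsilon}$ and $|\mathbf T|\le C(K\epsilon)^{1/2}(K\epsilon)^{-3}\epsilon^2\|D\mathbf u\|^2_{\ell^2_\epsilon}=CK^{-5/2}\epsilon^{-1/2}\|D\mathbf u\|^2_{\ell^2_\epsilon}$ with $C=C(C_\beta)$. Because $K\epsilon\le2$ we have $K^{-2}\le\sqrt2\,K^{-5/2}\epsilon^{-1/2}$, so the three terms collapse into one of the form $C_1|\phi_{2F}''|K^{-5/2}\epsilon^{-1/2}\|D\mathbf u\|^2_{\ell^2_\epsilon}$, which is \eqref{eq:1d_coerc_lower}.

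\emph{The upper bound \eqref{eq:1d_coerc_upper}: choice of test function.} It suffices to evaluate the Rayleigh quotient at one well-chosen $\mathbf u\in\mathcal U$, since the infimum is $\le$ that value. The decomposition above shows that the only quantity capable of pushing the quotient below $\phi_F''+C|\phi_{2F}''|$ is $\mathbf T=\sum_\ell\epsilon^3D^{(3)}\beta_{\ell+1}\,u_\ell\,Du_{\ell+1}$, and the natural way to activate it is to let $Du$ pick up a slowly varying piece proportional to $D^{(3)}\beta$. I would therefore take
\begin{equation*}
\mathbf u=\mathbf w+\lambda\,(D^{(2)}\boldsymbol\beta),\qquad (D^{(2)}\boldsymbol\beta)_\ell:=D^{(2)}\beta_\ell,
\end{equation*}
where $\mathbf w\in\mathcal U$ is a $2N$-periodic triangle wave of slope $\pm b$ (so $\|D\mathbf w\|^2_{\ell^2_\epsilon}=2b^2$ and $\|\mathbf w\|_{\ell^\infty_\epsilon}=b/2$), positioned so that $\mathcal I$ lies in the interior of an increasing ramp of $\mathbf w$ (hence $D^{(j)}\mathbf w\equiv0$ on $\mathcal I$ for $j\ge1$), $w_\ell$ equals a constant $\mu$ on $\mathcal I$ up to $O(b\epsilon K)$ with $|\mu|\asymp b$ and with $\mu$ of sign opposite to $\phi_{2F}''$, and both corners of $\mathbf w$ fall outside $\mathcal I$; $\lambda>0$ is fixed below, and one may also take $\beta$ symmetric. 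Note that $\mathbf u\in\mathcal U$ because $\sum_\ell D^{(2)}\beta_\ell=0$, and $\langle D\mathbf w,D^{(3)}\boldsymbol\beta\rangle=0$ because $\sum_{\mathcal I}D^{(3)}\beta_\ell=0$, so $\|D\mathbf u\|^2_{\ell^2_\epsilon}=2b^2+\lambda^2\Sigma$ with $\Sigma:=\|D^{(3)}\boldsymbol\beta\|^2_{\ell^2_\epsilon}$.

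\emph{The upper bound: the estimate.} Inserting this $\mathbf u$ into the decomposition, the first two summands are $\le(\phi_F''+4|\phi_{2F}''|)\|D\mathbf u\|^2_{\ell^2_\epsilon}$ exactly as above. Expanding $\mathbf T$, using that $\mathbf w$ is affine on $\mathcal I$ together with the summation-by-parts identities $\sum_\ell D^{(3)}\beta_{\ell+1}=\sum_\ell(\ell-\ell_\ast)D^{(3)}\beta_{\ell+1}=0$ and $\sum_\ell D^{(2)}\beta_\ell D^{(3)}\beta_{\ell+1}=-\tfrac12\sum_\ell(D^{(3)}\beta_\ell)^2$ (and the analogous identities for $\mathbf R,\mathbf S$), one finds $\mathbf T=\mu\lambda\epsilon^2\Sigma+(\text{remainder})$, where the remainder — together with $\mathbf R$ and $\mathbf S$ — is of size $O\big(\epsilon+(K\epsilon)^{1/2}\big)\,|\mu|\lambda\epsilon^2\Sigma$. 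Hence, once $\epsilon$ and $K\epsilon$ are below an absolute threshold,
\begin{equation*}
\langle L^{bqcf}\mathbf u,\mathbf u\rangle
\le(\phi_F''+4|\phi_{2F}''|)\big(2b^2+\lambda^2\Sigma\big)-\tfrac14|\phi_{2F}''|\,|\mu|\,\lambda\epsilon^2\Sigma,
\end{equation*}
the sign of the last term being correct by the choice of $\mathrm{sign}\,\mu$. Choosing $\lambda=b/\sqrt\Sigma$ (so $\lambda^2\Sigma=b^2$, $\|D\mathbf u\|^2_{\ell^2_\epsilon}=3b^2$, $|\mu|\lambda\Sigma\asymp b^2\sqrt\Sigma$) and dividing by $\|D\mathbf u\|^2_{\ell^2_\epsilon}$ gives $\langle L^{bqcf}\mathbf u,\mathbf u\rangle/\|D\mathbf u\|^2_{\ell^2_\epsilon}\le\phi_F''+C_2|\phi_{2F}''|-c\,|\phi_{2F}''|\epsilon^2\sqrt\Sigma$. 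Finally, Lemma~\ref{BlendFunEstLemma}(iii) yields $\Sigma=\epsilon\sum_\ell(D^{(3)}\beta_\ell)^2\ge\epsilon\cdot\tfrac{K}{2C_\beta}\cdot\tfrac14(\epsilon K)^{-6}=\tfrac1{8C_\beta}(\epsilon K)^{-5}$, hence $\epsilon^2\sqrt\Sigma\ge c'K^{-5/2}\epsilon^{-1/2}$, which is \eqref{eq:1d_coerc_upper} with $C_3=cc'$.

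\emph{Main obstacle.} The delicate point is the bookkeeping that shows every term of $\mathbf T$ other than $\mu\lambda\epsilon^2\Sigma$ (and all of $\mathbf R,\mathbf S$) carries an extra factor $\epsilon$ or $(K\epsilon)^{1/2}$; this is precisely where the affineness of $\mathbf w$ on $\mathcal I$, the cancellation identities, the symmetry of $\beta$, and the near-optimal lower bound on $\Sigma$ from Lemma~\ref{BlendFunEstLemma}(iii) are used, and it also fixes the optimal scaling $\lambda\asymp b(K\epsilon)^{5/2}$. The complementary regime $K\epsilon\gtrsim1$ (i.e.\ $K\gtrsim N$), where the threshold above fails, is separate but easy: there $K^{-5/2}\epsilon^{-1/2}=O(N^{-2})$ is negligible and \eqref{eq:1d_coerc_upper} already follows from testing with an oscillatory displacement as in the proof of Lemma~\ref{th:stab_atm}, at the cost of slightly enlarging $C_2$.
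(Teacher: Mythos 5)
Your proof of the lower bound \eqref{eq:1d_coerc_lower} is essentially the paper's own argument: the same decomposition from Lemma~\ref{DivformLemma}, the same case split on the sign of $\phi_{2F}''$ to extract $c_0$, and the same combination of Lemma~\ref{SigEstLemma} with \eqref{eq:BlendFunEst_upper} plus the observation $K^{-2}\lesssim K^{-5/2}\epsilon^{-1/2}$ (valid since $K\le 2N$). Nothing further to say there.

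For the upper bound \eqref{eq:1d_coerc_upper} you take a genuinely different route, and it works. The paper builds its test function directly from Lemma~\ref{BlendFunEstLemma}(iii): it sets $v_\ell' = L^{-1/2}$ on $\mathcal{J}' = \{\ell : D^{(3)}\beta_\ell \le -\tfrac12(\epsilon K)^{-3}\}$ (with $L=\epsilon\#\mathcal{J}'=\alpha\epsilon K$, $\alpha\ge 1/(2C_\beta)$), $v_\ell'=0$ on the rest of $\mathcal{I}$, and $v_0=\tfrac12$, so that every summand of ${\bf T}$ over $\mathcal{J}'$ has one sign and the \emph{pointwise} lower bound on $-D^{(3)}\beta_\ell$ immediately gives ${\bf T}\le -\tfrac{\alpha^{1/2}}{4}K^{-5/2}\epsilon^{-1/2}$, with $|{\bf R}+{\bf S}|\le C K^{-2}$ absorbed. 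You instead align $Du$ with $D^{(3)}\beta$ exactly, perturbing a triangle wave by $\lambda D^{(2)}\boldsymbol\beta$, so that the main term of ${\bf T}$ becomes $\mu\lambda\epsilon^2\Sigma$ with $\Sigma=\|D^{(3)}\boldsymbol\beta\|_{\ell^2_\epsilon}^2$, and only then invoke Lemma~\ref{BlendFunEstLemma}(iii) to get $\Sigma\ge\tfrac{1}{8C_\beta}(\epsilon K)^{-5}$. Both give the same scaling; the paper's piecewise-constant-derivative choice buys a one-line signed estimate of ${\bf T}$ with no expansion, while yours buys an explicit identification of the extremal direction at the cost of the bookkeeping you flag as the main obstacle. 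I checked that bookkeeping: the $w_\ell Dw_{\ell+1}$ block of ${\bf T}$ vanishes exactly by the two telescoping identities, and the remaining off-diagonal pieces are smaller than $|\mu|\lambda\epsilon^2\Sigma$ by relative factors $\epsilon$, $K\epsilon$ or $(K\epsilon)^{1/2}$, as you claim. Two minor corrections: the identity $\sum_\ell D^{(2)}\beta_\ell D^{(3)}\beta_{\ell+1} = -\tfrac12\sum_\ell (D^{(3)}\beta_\ell)^2$ should carry a factor $\epsilon$ on the right-hand side, i.e.\ $-\tfrac{\epsilon}{2}\sum_\ell (D^{(3)}\beta_\ell)^2$ (harmless, since that term is then $O(\epsilon)$ relative to the main term, which is in fact what makes it a remainder); and the symmetry of $\beta$ is never actually used. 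Your explicit handling of the regime $K\epsilon\gtrsim 1$ by enlarging $C_2$ is a detail the paper silently omits.
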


\begin{remark}
  Estimates \eqref{eq:1d_coerc_lower} and \eqref{eq:1d_coerc_upper}
  establish the asymptotic optimality of the blending width $K \eqsim
  \epsilon^{-1/5}$ in the limit as $\epsilon \to 0$:
  \eqref{eq:1d_coerc_lower} implies that, if $c_0 > 0$ and $K \gg
  \epsilon^{-1/5}$, then $L^{bqcf}$ is coercive, while
  \eqref{eq:1d_coerc_upper} shows that, if $K \ll \epsilon^{-1/5}$
  then $L^{bqcf}$ is necessarily indefinite. \hfill \qed
\end{remark}

\begin{proof}
  We first prove the lower bound. The blended force-based operator
  satisfies $L^{bqcf}$
  \[
  \la L^{bqcf}\mathbf{u},\mathbf{u}\ra
  =A_F\|D\mathbf{u}\|^2_{\ell_{\epsilon}^2}-\epsilon^2
  \phi''_{2F}\|\sqrt{\beta}D^{(2)}\mathbf{u}\|^2_{\ell_{\epsilon}^2}
  +\phi''_{2F}(\mathbf{R}+\mathbf{S}+\mathbf{T})
  \]
  where $A_F:=\phi''_{F}+4\phi''_{2F}.$ From Lemma~\ref{SigEstLemma},
  we have \begin{equation*}
    |\mathbf{R}+\mathbf{S}+\mathbf{T}|%\le 2|R|+
%\epsilon^2\left[ 2\|D^{(2)}\beta\|_{\ell_{\epsilon}^{\infty}}
%+(K\epsilon)^{1/2}\|D^{(2)}\beta\|_{\ell_{\epsilon}^{\infty}}\right]\|D\mathbf{u}\|^2_{\ell_{\epsilon}^2}\\
\le \epsilon^2\left[
4\|D^{(2)}\beta\|_{\ell_{\epsilon}^{\infty}}
+(K\epsilon)^{1/2}\|D^{(3)}\beta\|_{\ell_{\epsilon}^{\infty}}\right]\|D\mathbf{u}\|^2_{\ell_{\epsilon}^2}.
\end{equation*}
Since
$\|D^{(j)}\beta\|_{\ell_{\epsilon}^{\infty}}\le
C_{\beta}(K\epsilon)^{-j}$, so we have
\begin{align*}
|\mathbf{R}+\mathbf{S}+\mathbf{T}| \le C \epsilon^2\left[ 4(K\epsilon)^{-2}
+(K\epsilon)^{1/2}(K\epsilon)^{-3}\right]\|D\mathbf{u}\|^2_{\ell_{\epsilon}^2}
\le C_3 \left[K^{-5/2}\epsilon^{-1/2}\right]\|D\mathbf{u}\|^2_{\ell_{\epsilon}^2},
\end{align*}
where we used the fact that $K^{-2} \leq K^{-5/2} \epsilon^{-1/2}$.

If $\phi''_{2F}\le 0$, then we obtain
\[
\la L^{bqcf}\mathbf{u},\mathbf{u}\ra \ge \left(A_F-C_1|\phi''_{2F}|
\left[ K^{-5/2}\epsilon^{-1/2}\right]\right)\|D\mathbf{u}\|^2_{\ell_{\epsilon}^2}.
\]
If $\phi''_{2F} > 0$, then
\begin{align*}
  \la L^{bqcf}_{2}\mathbf{u},\mathbf{u}\ra
  =~& A_F\|D\mathbf{u}\|^2_{\ell_{\epsilon}^2}-\epsilon^2
\phi''_{2F}\|\sqrt{\beta}D^{(2)}\mathbf{u}\|^2_{\ell_{\epsilon}^2}+\phi''_{2F}(\mathbf{R}+\mathbf{S}+\mathbf{T})
\\
%=~& \la L^a \mathbf{u},\mathbf{u}\ra + \epsilon^2
%\phi''_{2F}\|(1-\sqrt{\beta})D^{(2)}\mathbf{u}\|^2_{\ell_{\epsilon}^2}
%+\phi''_{2F}\mathbf{R}+\phi''_{2F}\mathbf{S} \\
\geq~& \left(\phi''_{F}-C_3|\phi''_{2F}|
  \left[K^{-5/2}\epsilon^{-1/2}\right]\right)\|D\mathbf{u}\|^2_{\ell_{\epsilon}^2},
\end{align*}
which is the corresponding result.

To prove the opposite bound, let $\mathcal{J}$ be defined as in Lemma
\ref{BlendFunEstLemma} (iii). We can assume this without loss of
generality upon possibly shifting and inverting the blending function.
We define $\mathcal{J}' := \{ \ell \in \mathcal{J} : D^{(3)}
\beta_\ell \leq - \smfrac12 (\epsilon K)^{-3} \}$ and $L := \epsilon
\#\mathcal{J}' = \alpha \epsilon K$ for some $\alpha \geq 1 /
(2C_\beta)$, and a test function $\mathbf{v} \in \mathcal{U}$ through
$v_0 = \smfrac12$ and
\begin{equation}\label{DvDef1}
\begin{split}
v_{\ell}'=\begin{cases} L^{-1/2}, &\quad \ell\in
\mathcal{J}'\\
0, & \quad \ell\in \mathcal{I}\setminus \mathcal{J}',
\end{cases}
\end{split}
\end{equation}
and extending $v_\ell'$ outside of $\mathcal{I}$ in such a way that
$\|D{\bf v}\|_{\ell^2_\epsilon}$ is bounded uniformly in $\mathcal{I}$
and $N$, and such that ${\bf v}$ is $2N$-periodic (see
\cite{qcf.stab} for details of this construction).

With these definitions we obtain
\begin{align*}
  {\bf T} =~&
  \epsilon^3\sum\limits_{\ell=-N+1}^{N}D^{(3)}\beta_{\ell+1}Dv_{\ell+1}\,v_{\ell}
  = \epsilon^3 \sum_{\ell \in \mathcal{J}'} D^{(3)}\beta_{\ell-1}
  v_{\ell}' v_{\ell-1} \\
  \leq~& - \frac{\epsilon^2 L L^{-1/2}}{4 (\epsilon K)^3} = -
  \frac{(\alpha \epsilon K)^{1/2}}{4 \epsilon K^3} = - \smfrac{\alpha^{1/2}}{4}
  K^{-5/2} \epsilon^{-1/2}.
\end{align*}
Recall that, by contrast, we have
\begin{displaymath}
  |{\bf R} + {\bf S}| \leq C_2 K^{-2} \| D{\bf v} \|_{\ell^2_\epsilon}^2.
\end{displaymath}
Combining these estimates, and using the fact that $\|D{\bf
  v}\|_{\ell^2_\epsilon}$ is bounded independently of $\mathcal{I}$
and $N$, yields \eqref{eq:1d_coerc_upper}.
\end{proof}

\section{Positive-Definiteness of the B-QCF Operator in
  $2$D}\label{2DBQCFsection}
\subsection{The triangular lattice}
For some integer $N\in\mathbb{N}$ and $\epsilon:=1/N$, we define the
scaled 2D triangular lattice
\[
\mathbb{L}:=\mathtt{A}_{6}\mathbb{Z}^2,\quad\text{where}\quad
\mathtt{A}_{6}:=\left[{a}_{1},{a}_{2}\right]
:= \epsilon\left[\begin{array}{cc}
1 & 1/2\\
0 & \sqrt{3}/2
\end{array}\right],
\]
where ${a}_{i},\,i=1,2$ are the scaled lattice vectors.
Throughout our analysis, we use the following definition of the periodic reference cell
\[
\Omega:=\mathtt{A}_{6}(-N, N]^2\quad \text{and}\quad \mathcal{L}:=\mathbb{L}\cap\Omega.
\]
We furthermore set ${a}_3=(-1/2\epsilon,
\sqrt{3}/2\epsilon)^{\mathtt{T}}$, $a_4 := - a_1, a_5 := -a_2$ and $a_6
:= -a_3$; then the set of \emph{nearest-neighbor directions} is given
by
\[
\mathcal{N}_{1}:=\{\pm{a}_1,\pm{a}_{2},\pm{a}_3\}.
\]
The set of \emph{next nearest-neighbor directions} is given by
\[
\mathcal{N}_{2}:=\{\pm {b}_1,\pm {b}_{2},\pm {b}_3\},
\quad \text{where} \quad b_1:=a_1+a_2,\quad b_2:=a_2+a_3\quad\text{and}\quad b_3=a_3-a_1.
\]
We use the notation $\mathcal{N}:=\mathcal{N}_1\cup\mathcal{N}_2$ to
denote the directions of the neighboring bonds in the interaction
range of each atom (see Figure~\ref{AtomDomainFig}).

We identify all lattice functions $\mathbf{v} : \mathbb{L} \to
\mathbb{R}^2$ with their continuous, piece affine interpolants with
respect to the canonical triangulation $\mathcal{T}$ of $\mathbb{R}^2$
with nodes $\mathbb{L}$.

\begin{figure}
\begin{center}
\subfigure[Neighbor Set]{\includegraphics[height = 5 cm ]{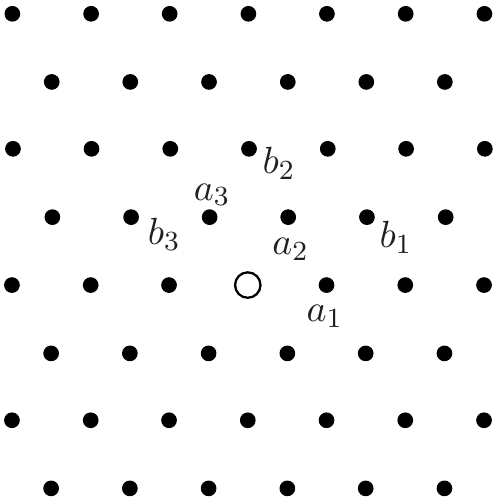}}\quad
\subfigure[Domain Decomposition]{\includegraphics [height =5 cm]{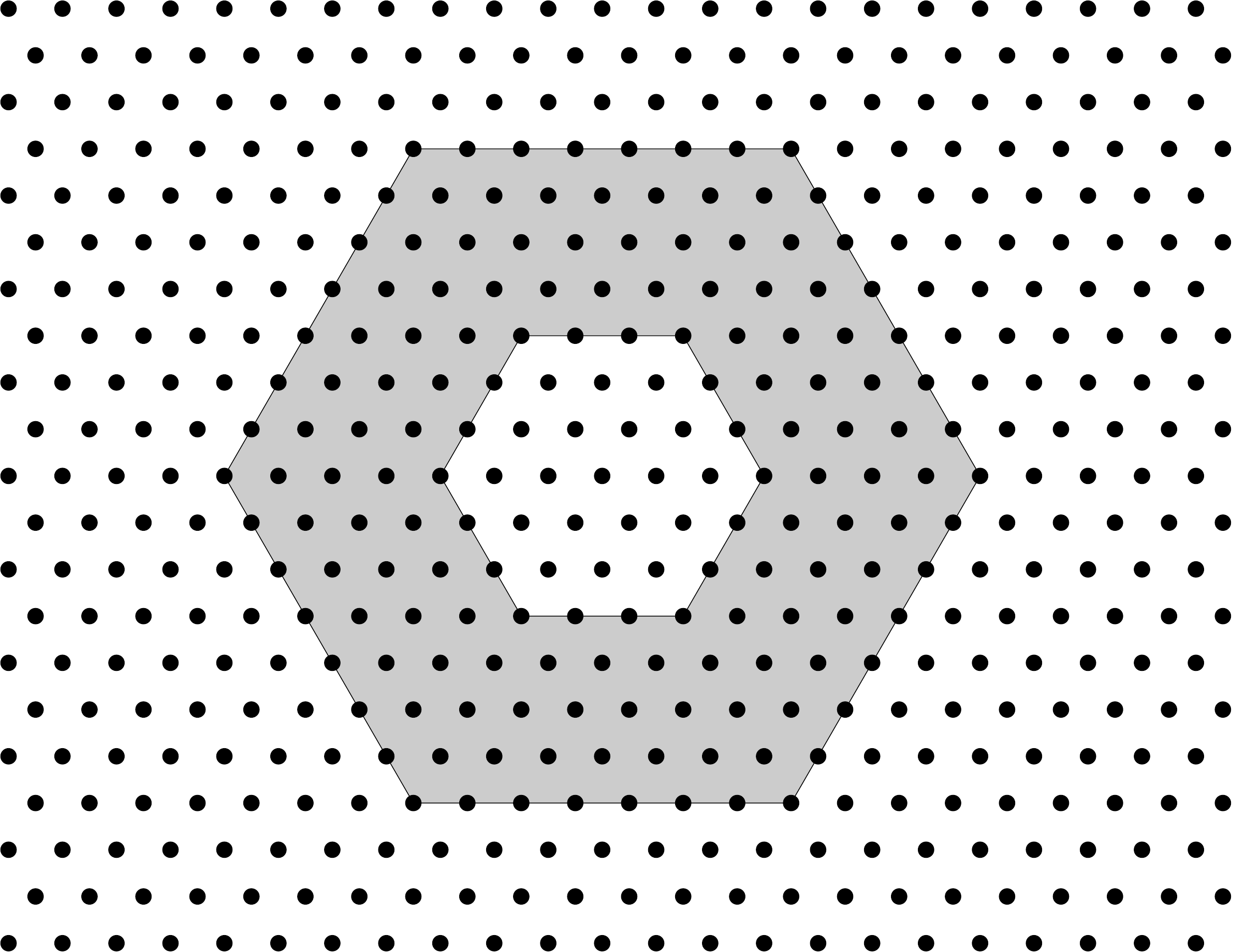}}
\end{center}
\caption{(a) The 12 neighboring bonds of each atom. (b) The atomistic
  region is $\Omega_a=\mathtt{Hex}(\epsilon R_a)$. The blending region
  is $\Omega_b=\mathtt{Hex}(\epsilon R_b)\setminus \Omega_a$. Here,
  $R_a = 3$, $R_b = 7$ and $K = 4$.}\label{AtomDomainFig}
\end{figure}
\subsection{The atomistic, continuum and blending regions}
Let $\mathtt{Hex}(r)$ denote the closed hexagon centered at the
origin, with sides aligned with the lattice directions $a_1, a_2, a_3$,
and diameter $2r$.

For $R_a< R_b \in \mathbb{N}$, we define the atomistic, blending and
continuum regions, respectively, as
\begin{displaymath}
  \Omega_a := \mathtt{Hex}(\epsilon R_a), \quad
  \Omega_b := \mathtt{Hex}(\epsilon R_b) \setminus \Omega_a,
  \quad \text{and} \quad
  \Omega_c:={\rm clos}\left(\Omega\setminus\left(\Omega_a\cup\Omega_b\right)\right).
\end{displaymath}
We denote the blending width by $K := R_b - R_a$.  Moreover, we define
the corresponding lattice sites
\begin{displaymath}
  \mathcal{L}^a := \mathcal{L}\cap \Omega_a, \qquad
  \mathcal{L}^b := \mathcal{L}\cap \Omega_b, \qquad \text{and} \qquad
  \mathcal{L}^c := \mathcal{L}\cap \Omega_c.
\end{displaymath}
For simplicity, we will again use $\mathcal{L}$ as the finite element
nodes, that is, every atom is a repatom.

% We will sometimes also write this radius as $R_a = \epsilon^{-\alpha}$
% for some $0 < \alpha < 1$.  Thus, the macroscopic circumradius is
% $\epsilon R_a=\epsilon^{1-\alpha}$ and the atomistic region is
% $\Omega_a:=\overline{\mathtt{Hex}}(\epsilon^{1-\alpha},x_0)$.   So the blending region is an annulus around the atomistic
% region.  The macroscopic width of $\Omega_b$ is $K\epsilon$ and its
% corners also belong to $\mathcal{L}$(see figure~\ref{AtomDomainFig}).

%%%%%%%%%%%%%%%%%%%%%%%%%%%%%%%%%
For a map $\mathbf{v}:\mathbb{L}\rightarrow \mathbb{R}^2$ and bond
directions $r,s\in \mathcal{N}$, we define the finite difference
operators
\begin{displaymath}
D_{r}v(x):=\frac{v(x+r)-v(x)}{\epsilon}\quad\text{and}\quad
D_{r}D_{s}v(x):=\frac{D_{s}v(x+r)-D_{s}v(x)}{\epsilon}.
\end{displaymath}
% Let  $D^{(2)}_{r}v(x)$ denote $D_{r}D_{r}v(x)$ and
% $D^{(3)}_{r}v(x)$ is defined in a similar way.
% For $j\in \mathbb{N}$, we define $D^{(j)}v(x)$ as
% the set of $6$ nearest neighboring directional
% differences of v(x):
% \[
% D^{(j)}v(x):=\{D^{(j)}_{r}v(x): r\in \mathcal{N}_1 \}.
% \]
%%%%%%%%%%%%%%%%%%%%%%%%%%%%%%%%%%%
%DEFINE THE SPACE $\mathcal{U}$ AND THE FINITE ELEMENT SPACE $\mathcal{Y}$.

We define the space of all admissible displacements, $\mathcal{U}$, as
all discrete functions $\mathbb{L}\rightarrow \mathbb{R}^2$ which are
$\Omega$-periodic and satisfy the mean zero condition on the computational domain:
\[
\mathcal{U}:=\Big\{\mathbf{u}:\mathbb{L}\rightarrow
\mathbb{R}^2 : \text{$u(x)$ is $\Omega$-periodic and
}{\textstyle \sum_{x\in\mathcal{L}}} u(x)=0 \Big\}.
\]
For a given matrix $B \in \mathbb{R}^{2 \times 2}$,
$\mathrm{det}(B)>0$, we admit deformations $\mathbf{y}$ from the space
\[
\mathcal{Y}_{B}:=\big\{ \mathbf{y}
:\mathbb{L}\rightarrow
\mathbb{R}^2: y(x)=Bx+u(x), \,\forall x\in \mathbb{L}\,\text{for some $\mathbf{u}\in\mathcal{U}$}
\big\}.
\]

% Then the space of B-QCF deformations are defined by $P_{1}$ finite element space corresponding to the triangulation $\mathcal{T}_h$:
% \[
% \mathcal{Y}_h:=\{
% \mathbf{y}_h:\overline{\Omega_c\cup\Omega_b}\cup\mathcal{L}^a\rightarrow \mathbb{R}^2,\,
% \text{$\mathbf{y}_h$ is continuous on $\Omega_c\cup\Omega_b$ and is affine on each $T\in\mathcal{T}_h$}
% \}.
% \]
For a displacement $\mathbf{u}\in \mathcal{U}$ and its discrete directional derivatives, we employ the weighted
discrete $\ell_{\epsilon}^{2}$  and $\ell_{\epsilon}^{\infty}$ norms given by
\begin{align*}
&\|\mathbf{u}\|_{\ell_{\epsilon}^{2}}:= \left( \epsilon^2
\sum_{x\in\mathcal{L}}|u(x)|^{2}\right)^{1/2},\quad
\|\mathbf{u}\|_{\ell_{\epsilon}^{\infty}}:=\max\limits_{x\in\mathcal{L}}|u(x)|,\quad\text{and}\\
&\qquad\quad\|D\mathbf{u}\|_{\ell_{\epsilon}^{2}}:=
\left(\epsilon^2\sum_{x\in\mathcal{L}}\sum_{i=1}^{3}|D_{a_i}u(x)|^2\right)^{1/2}.
\end{align*}
The inner product associated with $\ell^2_\epsilon$ is
\[
\la \mathbf{u},\mathbf{w}\ra :=\epsilon^2\sum\limits_{x\in\mathcal{L}}u(x)\cdot w(x).
\]
%%%%%%%%%%%%%%%%%%%%%%%%%%%%%%%%%%%%
\subsection{The B-QCF operator}
The total scaled atomistic energy for a periodic computational cell
$\Omega$ is
\begin{align}
\mathcal{E}^{a}(\mathbf{y})=&\frac{\epsilon^2}{2}\sum_{x\in\mathcal{L}}\sum_{r\in \mathcal{N}}
\phi(D_{r}y({x}))\label{AtomEnergy2D}
= {\epsilon^2}\sum_{x\in\mathcal{L}}\sum_{i=1}^{3}\big[\phi(D_{a_{i}}y({x}))
+\phi(D_{b_{i}}y({x}))\big],
\end{align}
where $\phi \in C^2(\mathbb{R}^2)$, for the sake of simplicity.
Typically, one assumes $\phi(r) = \varphi(|r|)$; the more general form
we use gives rise to a simplified notation; see also
\cite{OrtnerShapeev:2010}. We define $\phi'(r) \in \mathbb{R}^2$ and
$\phi''(r) \in \mathbb{R}^{2 \times 2}$ to be, respectively, the
gradient and hessian of $\phi$.

The equilibrium equations are given by the force balance at each atom,
\begin{equation}\label{AtomEquil2D}
F^{a}(x;y)+f(x;y)=0,\quad \text{for}\quad x\in\mathcal{L},
\end{equation}
where $f(x;y)$ are the external forces and $F^{a}(x;y)$ are the
atomistic forces (per unit volume $\epsilon^2$)
\begin{align*}
F^{a}(x;y):=&-\frac{1}{\epsilon^2}\frac{\partial \mathcal{E}^{a}(\mathbf{y})}{\partial y(x)}\\
=% &\frac{1}{2\epsilon} \sum_{i=1}^{3}\Big[-\phi'\left(D_{a_i}y(x)\right) -\phi'\left(D_{-a_i}y(x)\right)
%           +\phi'\left(D_{-a_i}y(x+a_i)\right) +\phi'\left(D_{a_i}y(x-a_i)\right) \Big]\\
 &- \frac{1}{\epsilon} \sum_{i=1}^{3}\Big[\phi'\left(D_{a_i}y(x)\right) +\phi'\left(D_{-a_i}y(x)\right)
           \Big]
%&\quad +\frac{1}{2\epsilon} \sum_{i=1}^{3}\Big[-\phi'\left(D_{b_i}y(x)\right) -\phi'\left(D_{-b_i}y(x)\right)
%           +\phi'\left(D_{-b_i}y(x+b_i)\right) +\phi'\left(D_{b_i}y(x-b_i)\right) \Big].
 -\frac{1}{\epsilon} \sum_{i=1}^{3}\Big[\phi'\left(D_{b_i}y(x)\right) +\phi'\left(D_{-b_i}y(x)\right)
           \Big].
\end{align*}
Again, since $\mathbf{u} = \mathbf{y} - \mathbf{y}_B$, where $y_B(x) =
B x$, is assumed to be small we can linearize the atomistic
equilibrium equation \eqref{AtomEquil2D} about $\mathbf{y}_B$:
\[
\left(L^a\mathbf{u}^a\right)(x)=f(x),\quad \text{for}\quad x\in \mathcal{L},
\]
where $\left(L^a\mathbf{v}\right)(x)$, for a displacement $\mathbf{v}$, is given by
\[
\left(L^a\mathbf{v}\right)(x)=-\sum_{i=1}^{3}\phi''(Ba_{i})D_{a_{i}}D_{a_{i}}v(x-a_{i})
-\sum_{i=1}^{3}\phi''(Bb_{i})D_{b_{i}}D_{b_{i}}v(x-b_{i}),\quad\text{for}\quad x\in\mathcal{L}.
\]

The QCL approximation uses the Cauchy--Born extrapolation rule to
approximate the nonlocal atomistic model by a local continuum model
\cite{Ortiz:1995a,Shenoy:1999a,Miller:2003a}.  According to the bond
density lemma \cite[Lemma 3.2]{OrtnerShapeev:2010} (see also
\cite{Shapeev2D:2011}), we can write the total QCL energy as a sum of
the bond density integrals
\begin{equation}
  \label{QCLEnergy2D}
  \mathcal{E}^{c}(\mathbf{y})= \int_\Omega \sum_{r \in \mathcal{N}}
  \phi(\partial_r y) \, dx
  = \sum_{x\in \mathcal{L}}\sum_{r\in \mathcal{N}}
  \int_{0}^{1}\phi\big(\partial_{r}y(x+tr)\big)dt,
\end{equation}
where $\partial_{r} y(x) = \frac{d}{dt} y(x+t r)|_{t = 0}$ denotes the
directional derivative. We compute the continuum force $F^{c}(x;y) =
-\frac{1}{\epsilon^2} \frac{\partial\mathcal{E}^c}{\partial y(x)}$,
and linearize the force equation about the uniform deformation
$\mathbf{y}_{B}$ to obtain
\[
\left(L^{c}\mathbf{u}^{c}\right)(x)=f(x),\quad \text{for}\quad x\in \mathcal{L}.
\]

To formulate the B-QCF method, let the blending function
$\beta(s):\mathbb{R}^2\rightarrow [0, 1]$ be a "smooth",
$\Omega$-periodic function. We shall suppose throughout that
$R_a, R_b$ are chosen in such a way that
\begin{equation}
  \label{eq:supp_beta}
  {\rm supp}(D_{a_{i_1}} D_{a_{i_2}} D_{a_{i_3}} \beta) \subset
  \Omega_b \qquad \forall i \in \{1,\dots, 6\}^3.
\end{equation}
Then, the (nonlinear) B-QCF forces are given by
\begin{displaymath}
  F^{bqcf}(x; y) := \beta(x) F^a(x; y) + (1-\beta(x)) F^c(x; y),
\end{displaymath}
and linearizing the equilibrium equation $F^{bqcf} + f = 0$ about
$y_B$ yields
\begin{equation}
  \label{eq:2}
  \begin{split}
    & (L^{bqcf} \mathbf{u}^{bqcf})(x) = f(x), \quad \text{for } x \in
    \mathcal{L},\\
    &\text{where} \quad  (L^{bqcf} \mathbf{v})(x) = \beta(x) (L^a
    \mathbf{v})(x) + (1-\beta(x)) (L^c\mathbf{v})(x).
  \end{split}
\end{equation}

Since the nearest neighbor terms in the atomistic and the QCL models
are the same, we will focus on the second-neighbor interactions. We
rewrite the operator $L^{bqcf}$ in the form
\begin{align*}
  (L^{bqcf}\mathbf{v})(x) =~& \sum_{r \in \mathcal{N}}
  (L^{bqcf}_r \mathbf{v})(x), \\
  \text{where} \quad L^{bqcf}_r \mathbf{v}(x) =~& \beta(x) (L^a_r
  \mathbf{v})(x) + (1-\beta(x)) (L^c_r \mathbf{v})(x),
\end{align*}
where the nearest-neighbor operators are given by
\begin{displaymath}
  L^a_{a_j} \mathbf{v}(x) = L^c_{a_j} \mathbf{v}(x) = - \phi''(Ba_j) D_{a_j}
D_{a_j} v(x-a_j),
\end{displaymath}
and the second-neighbor operators, stated for convenience only for
$b_1 = a_1 + a_2$, by
\begin{align*}
\left(L^a_{b_1}\mathbf{u}\right)(x)=&
- \phi''(B b_1) D_{b_{1}}D_{b_{1}}v(x-b_{1}),\quad\text{while}\\
%%%%%%%%%%%%%%%%%
\left(L^{c}_{b_1}\mathbf{u}\right)(x)=&
- \phi''(Bb_1) \big[D_{a_1}D_{a_1}u(x-a_1)+
D_{a_2}D_{a_2}u(x-a_2)\big.\\
&\qquad\qquad\qquad \big.+D_{a_1}D_{a_2}u(x-a_1)+D_{a_1}D_{a_2}u(x-a_2)\big].
\end{align*}

\subsection{Auxiliary results}
The following is the 2D counterpart of the summation by parts
formula. The proof is straightforward.
\begin{lemma}[Summation by parts]
  For any $\mathbf{u} \in \mathcal{U}$ and any direction $r\in
  \mathbb{Z}^2$, we have
\begin{equation}\label{SumByPart2D}
\sum_{x\in\mathcal{L}}D_rD_r u(x-r)\cdot u(x)=-\sum_{x\in\mathcal{L}}D_{r}u(x-r)\cdot D_{r}u(x-r).
\end{equation}
\end{lemma}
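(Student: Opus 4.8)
The identity \eqref{SumByPart2D} is a purely discrete integration-by-parts formula, and I would prove it by directly manipulating the finite-difference operators together with two translations of the summation index, each justified by the $\Omega$-periodicity of $\mathbf{u}$. The single structural fact needed is this: since $\mathbf{u}$, and hence every finite difference $D_r\mathbf{u}$ and $D_rD_r\mathbf{u}$, is $\Omega$-periodic, and since translation by the lattice vector $r$ permutes $\mathcal{L}$ modulo $\Omega$, any sum $\sum_{x\in\mathcal{L}}g(x)$ with $\mathbf{g}$ $\Omega$-periodic is invariant under the substitution $x\mapsto x+r$. Once this is granted, everything else is algebra with no estimates.

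The plan is as follows. First I would rewrite the left-hand side using $D_rD_r u(x-r)=\epsilon^{-1}\bigl(D_r u(x)-D_r u(x-r)\bigr)$, which gives
\[
\sum_{x\in\mathcal{L}} D_rD_r u(x-r)\cdot u(x)
= \frac1\epsilon \sum_{x\in\mathcal{L}} D_r u(x)\cdot u(x)
- \frac1\epsilon \sum_{x\in\mathcal{L}} D_r u(x-r)\cdot u(x).
\]
Then, starting from the right-hand side, I would shift $x\mapsto x+r$ to replace $D_r u(x-r)\cdot D_r u(x-r)$ by $D_r u(x)\cdot D_r u(x)$, expand one factor via $\epsilon D_r u(x)=u(x+r)-u(x)$, and split into two sums: one is precisely $\epsilon^{-1}\sum_x D_r u(x)\cdot u(x)$, and in the other, which carries $u(x+r)$, a further shift $x\mapsto x-r$ produces $-\epsilon^{-1}\sum_x D_r u(x-r)\cdot u(x)$. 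Collecting the two terms reproduces exactly the expression displayed above for the left-hand side, completing the proof. An alternative, equally short route is to invoke the one-dimensional summation-by-parts lemma of Section~\ref{1DBQCFsection} applied line by line along the family of lattice lines in direction $r$, with $\Omega$-periodicity supplying the cancellation of the boundary terms; I prefer the index-shift computation because it sidesteps any bookkeeping over how $r$ decomposes into primitive lattice vectors.

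There is essentially no serious obstacle here — as the text already notes, the proof is straightforward. The only point deserving a line of justification is the admissibility of the two index shifts, i.e.\ that $\Omega$-periodicity of the summand makes $\sum_{x\in\mathcal{L}}$ translation-invariant under $x\mapsto x\pm r$; with that in hand the sums generated by expanding the two sides match term by term, and no further argument is required.
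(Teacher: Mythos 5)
Your argument is correct: the paper itself gives no proof (it simply declares the identity straightforward), and your index-shift computation — expanding $D_rD_ru(x-r)=\epsilon^{-1}(D_ru(x)-D_ru(x-r))$ and using translation invariance of $\sum_{x\in\mathcal{L}}$ under $x\mapsto x\pm r$ for $\Omega$-periodic summands — is exactly the verification the authors intend. The one point worth a clarifying word is that $r$ must be a lattice vector (the statement's ``$r\in\mathbb{Z}^2$'' should be read as a direction in $\mathbb{L}$, e.g.\ $r\in\mathcal{N}$) so that the shifts permute $\mathcal{L}$ modulo $\Omega$; you note this implicitly and your proof is complete.
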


The second auxiliary result we require is a trace- or Poincar\'e-type
inequality to bound %$\|\mathbf{u}\|_{\ell^2_\epsilon(\mathcal{L}^b)}$
$\|\mathbf{u}\|_{\ell^2_\epsilon(\Omega_b)}$
in terms of global norms. As a first step we establish a continuous
version of the inequality we are seeking. The key technical ingredient
in its proof is a sharp trace inequality, which is stated in Section
\ref{sec:app_trace}.

\begin{lemma}
  \label{th:blend_poincare_cts}
  Let $r_a < r_b \in (0, 1/2]$, and let $H := {\tt Hex}(r_b) \setminus
  {\tt Hex}(r_a)$; then there exists a constant $C$ that is
  independent of $r_a, r_b$ such that
  \begin{equation}
    \label{eq:blend_poincare_cts}
    \| u \|_{L^2(H)}^2 \leq C \big[(r_b - r_a) r_b |\log r_b |\big] \| \partial u
    \|_{L^2(\Omega)}^2
    \qquad \forall u \in H^1(\Omega), \int_\Omega u dx = 0.
  \end{equation}
\end{lemma}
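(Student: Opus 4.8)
The plan is to reduce the discrete-looking statement to a purely continuous estimate on the annular hexagonal region $H = {\tt Hex}(r_b) \setminus {\tt Hex}(r_a)$, and then prove that continuous estimate by a scaling/covering argument anchored on a sharp trace inequality. First I would observe that, since $H$ is a thin annulus of width $w := r_b - r_a$ wrapped around a hexagon of diameter $\eqsim r_b$, the $L^2(H)$ mass of $u$ can be controlled by the $L^2$ mass of $u$ on the outer (or inner) hexagonal boundary $\partial {\tt Hex}(r_b)$ together with the radial derivative of $u$ across the annulus: writing $u(x) = u(\pi(x)) - \int_0^{d(x)} \partial_\nu u \, ds$, where $\pi$ is the nearest-point projection onto the outer boundary and $d(x)$ the distance travelled, Minkowski's inequality in the annular (boundary $\times$ radius) coordinates gives
\[
\| u \|_{L^2(H)}^2 \leq 2 w \, \| u \|_{L^2(\partial {\tt Hex}(r_b))}^2 + 2 w^2 \| \partial u \|_{L^2(H)}^2 .
\]
The second term is already of the right form since $w^2 \leq w\, r_b |\log r_b|$ for $r_b$ bounded away from $1$ (here one uses $r_b \le 1/2$, so $|\log r_b| \ge \log 2 > 0$), so the whole game is to bound $\| u \|_{L^2(\partial {\tt Hex}(r_b))}^2$.

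The boundary term is exactly where the trace inequality enters, and this is the step I expect to be the main obstacle. A naive trace inequality $\| u \|_{L^2(\partial D)}^2 \leq C \| u \|_{H^1(D)}^2$ on a domain $D$ of size $\eqsim r_b$ would, after a scaling argument and the mean-zero Poincar\'e inequality, produce a factor $\eqsim r_b \| \partial u \|_{L^2(\Omega)}^2$ — but it would {\em not} capture the extra $|\log r_b|$ refinement, and, more importantly, a scale-invariant trace estimate on a disk of radius $r_b$ loses control because $H^1$ functions in 2D are not bounded. The correct tool is the sharp (scale-aware, logarithmically-weighted) trace inequality advertised in Section~\ref{sec:app_trace}: on an annular domain of inner/outer radius $\eqsim r_a, r_b$ one has $\| u \|_{L^2(\partial {\tt Hex}(r_b))}^2 \leq C r_b |\log r_b| \, \| \partial u \|_{L^2(\Omega)}^2$ for mean-zero $u$, the $|\log r_b|$ being the familiar price of the failure of the $H^1 \hookrightarrow L^\infty$ embedding in two dimensions (it is essentially the logarithmic capacity of a small hexagon). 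I would invoke that inequality as a black box here, combine it with the Minkowski estimate above, and collect the constants; the factor $(r_b - r_a) r_b |\log r_b|$ then falls out immediately.

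There is one bookkeeping point worth flagging: the mean-zero hypothesis is imposed over all of $\Omega$, not over $H$ or over ${\tt Hex}(r_b)$, so before applying any trace inequality one must either (a) have a trace inequality already phrased for $\Omega$-mean-zero functions, or (b) subtract the mean of $u$ over the relevant subdomain, bound that correction using the global Poincar\'e inequality on $\Omega$ (whose constant is an absolute constant since $\Omega$ is fixed), and absorb it. Option (b) is cleanest: write $u = (u - \bar u_{\tt Hex}) + \bar u_{\tt Hex}$, apply the sharp trace estimate to the first piece, and bound $|\bar u_{\tt Hex}| \cdot |\partial {\tt Hex}(r_b)|^{1/2}$ by observing that $|\bar u_{\tt Hex}| \le |\Omega|^{-1}\|u\|_{L^1(\Omega)} + (\text{diff of means}) \lesssim \|\partial u\|_{L^2(\Omega)}$ via Poincar\'e, and that $|\partial {\tt Hex}(r_b)| \eqsim r_b$. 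Both correction terms are then dominated by $r_b \|\partial u\|_{L^2(\Omega)}^2 \le r_b |\log r_b| \|\partial u\|_{L^2(\Omega)}^2$, which is within the claimed bound. Modulo invoking the sharp trace inequality of Section~\ref{sec:app_trace}, the rest is the routine Minkowski/Poincar\'e combination sketched above.
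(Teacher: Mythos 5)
Your argument is correct and follows essentially the same route as the paper: both reduce the claim to the sharp trace inequality of Section~\ref{sec:app_trace} on hexagonal shells, combined with the global Poincar\'e inequality on $\Omega$, the only cosmetic difference being that the paper applies the trace inequality to every shell $r\Sigma$, $r\in(r_a,r_b)$, and integrates in $r$, whereas you apply it once on $\partial\,{\tt Hex}(r_b)$ and fill in the annulus with a radial fundamental-theorem-of-calculus estimate. Your mean-zero bookkeeping in option (b) is more elaborate than needed: the appendix inequality holds for all of $H^1$ and carries an $L^2(A)$ term on the right, so one simply applies the $\Omega$-mean-zero Poincar\'e inequality to that term at the very end, as the paper does.
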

\begin{proof}
  Let $\Sigma := \partial {\tt Hex}(1)$, and let $dS$ denote the
  surface measure, then
  \begin{displaymath}
    \| u \|_{L^2(H)}^2 = \int_{r = r_a}^{r_b} \int_{\Sigma} |u|^2 dS\,dr.
  \end{displaymath}
  Applying \eqref{eq:general_trace} with $r_0 = r$ and $r_1 = 1$ to
  each surface integral, we obtain
  \begin{displaymath}
    \| u \|_{L^2(H)}^2 \leq (r_b-r_a) \big( C_0 \| u \|_{L^2(\Omega)}^2 + C_1
    \| \partial u \|_{L^2(\Omega)}^2 \big),
  \end{displaymath}
  where $C_0 \leq 8 r_b$ and $C_1 = 2 r_b |\log r_b|$. An
  application of Poincar\'{e}'s inequality yields
  \eqref{eq:blend_poincare_cts}.
\end{proof}

In our analysis, we require a result as \eqref{eq:blend_poincare_cts}
for discrete norms. We establish this next, using straightforward
norm-equivalence arguments.

\def\CPab{C_P^{a,b}}
\begin{lemma}\label{DiscrBlenPoin:lemma}
  Suppose that $R_b \leq N/2$, then
  \begin{equation}
    \label{eq:blend_poincare}
    \| \mathbf{u} \|_{\ell^2_\epsilon(\mathcal{L}^b)}^2 \leq C\, (\CPab)^2 \| D \mathbf{u}
    \|_{\ell^2_\epsilon}^2
    \qquad \forall \mathbf{u} \in \mathcal{U}.
  \end{equation}
  where $C$ is a generic constant, and $\CPab := \big[ (\epsilon K)
  (\epsilon R_b) |\log (\epsilon R_b)| \big]^{1/2}$.
\end{lemma}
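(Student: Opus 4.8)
The plan is to transfer the continuous inequality \eqref{eq:blend_poincare_cts} of Lemma~\ref{th:blend_poincare_cts} to the discrete setting by means of standard finite-element norm-equivalence estimates applied to the piecewise-affine interpolant. Recall that we have already agreed to identify every $\mathbf{u} \in \mathcal{U}$ with its continuous piecewise-affine interpolant on the canonical triangulation $\mathcal{T}$ of $\mathbb{R}^2$ with nodes $\mathbb{L}$; call this interpolant $u \in H^1(\Omega)$. I would carry out the argument in three steps.

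First, I would establish the norm equivalences on a single reference triangle and scale. On each triangle $T$ of $\mathcal{T}$ (all of which are congruent to a fixed reference triangle up to the scaling $\epsilon$ and the lattice rotations), the $L^2(T)$ norm of the affine interpolant is comparable, with absolute constants, to $\epsilon \cdot (\epsilon^2 \sum_{x \in T} |u(x)|^2)^{1/2}$, and the $L^2(T)$ norm of $\partial u$ (the constant gradient on $T$) is comparable to $\epsilon \cdot (\sum_{\text{edges } a_i \text{ of } T} |D_{a_i} u|^2)^{1/2}$ evaluated at the appropriate node. Summing over all $T \subset \Omega$ gives
\[
  c_1 \| \mathbf{u} \|_{\ell^2_\epsilon} \leq \| u \|_{L^2(\Omega)} \leq c_2 \| \mathbf{u} \|_{\ell^2_\epsilon},
  \qquad
  c_1 \| D\mathbf{u} \|_{\ell^2_\epsilon} \leq \| \partial u \|_{L^2(\Omega)} \leq c_2 \| D\mathbf{u} \|_{\ell^2_\epsilon},
\]
with $c_1, c_2$ absolute. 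One must be slightly careful that the mean-zero condition $\sum_{x\in\mathcal{L}} u(x) = 0$ from the definition of $\mathcal{U}$ and the condition $\int_\Omega u \,dx = 0$ required in Lemma~\ref{th:blend_poincare_cts} are compatible up to a harmless constant: the interpolant of a mean-zero nodal function has integral $O(\epsilon)\|\mathbf{u}\|$ in general, not exactly zero, so I would instead apply Lemma~\ref{th:blend_poincare_cts} to $u - \fint_\Omega u$ and absorb the correction using the discrete Poincar\'e inequality, noting $|\fint_\Omega u| \leq C \|\partial u\|_{L^2(\Omega)} \leq C\|D\mathbf{u}\|_{\ell^2_\epsilon}$.

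Second, I would relate $\|\mathbf{u}\|_{\ell^2_\epsilon(\mathcal{L}^b)}$ to $\|u\|_{L^2(H)}$ for a suitable continuous hexagonal annulus $H$. Since $\Omega_b = \mathtt{Hex}(\epsilon R_b) \setminus \mathtt{Hex}(\epsilon R_a)$ and the condition $R_b \leq N/2$ guarantees $\epsilon R_b \leq 1/2$ so that Lemma~\ref{th:blend_poincare_cts} applies with $r_a = \epsilon R_a$, $r_b = \epsilon R_b$, I would enlarge the annulus slightly to $H' := \mathtt{Hex}(\epsilon R_b + C\epsilon) \setminus \mathtt{Hex}(\epsilon R_a - C\epsilon)$ so that every node $x \in \mathcal{L}^b$ has its full triangle star contained in $H'$; then the nodal-value-to-$L^2$ equivalence on those triangles gives $\|\mathbf{u}\|_{\ell^2_\epsilon(\mathcal{L}^b)} \leq c \|u\|_{L^2(H')}$. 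Applying Lemma~\ref{th:blend_poincare_cts} to $H'$ yields the bound with the factor $(r_b' - r_a')\, r_b' |\log r_b'|$ where $r_b' = \epsilon R_b + C\epsilon$ and $r_a' = \epsilon R_a - C\epsilon$; since $r_b' - r_a' = \epsilon(K + 2C) \eqsim \epsilon K$ (using $K \geq 1$), $r_b' \eqsim \epsilon R_b$, and $|\log r_b'| \eqsim |\log(\epsilon R_b)|$, this factor is comparable to $(\epsilon K)(\epsilon R_b)|\log(\epsilon R_b)| = (\CPab)^2$, and combining with the gradient equivalence from Step~1 gives exactly \eqref{eq:blend_poincare}.

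The main obstacle — really the only nontrivial point — is the bookkeeping in Step~2: ensuring the widened annulus $H'$ genuinely contains the supports of all basis functions attached to $\mathcal{L}^b$ while its width and outer radius stay comparable (not merely bounded) to $\epsilon K$ and $\epsilon R_b$ respectively. This requires $K \geq 1$ and the regime $\epsilon R_b \leq 1/2$ (hence $|\log(\epsilon R_b)|$ bounded below by a positive constant, so the logarithmic factor cannot degenerate), both of which are available from the hypothesis $R_b \leq N/2$ and the standing assumption $R_a < R_b \in \mathbb{N}$. Everything else is routine: the triangle-wise norm equivalences are standard finite-element estimates with explicit constants depending only on the shape-regularity of $\mathcal{T}$, which here is a single fixed triangle shape.
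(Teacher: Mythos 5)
Your proposal is correct and follows essentially the same route as the paper: both transfer the continuous annulus estimate of Lemma~\ref{th:blend_poincare_cts} to the discrete setting via elementwise norm equivalence for the piecewise-affine interpolant (nodal values versus $L^2$ on each triangle, and $\sum_{j}|(\partial u|_T)a_j|^2$ versus $|\partial u|_T|^2$ for the gradients), using $R_b \le N/2$ to keep $\epsilon R_b$ in the admissible range. The one harmless difference is that your mean-zero correction is unnecessary: since $\int_T u\,dx = \frac{|T|}{3}\sum_{j=1}^3 u(x_j)$ for an affine function on a triangle, the discrete zero-mean condition already forces $\int_\Omega u\,dx = 0$ exactly, which is how the paper disposes of this point.
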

\begin{proof}
  Recall the identification of $\mathbf{u}$ with its corresponding
  $P_1$-interpolant.  Let $T \in \mathcal{T}$ with corners $x_j$, $j =
  1, 2, 3$, then
  \begin{displaymath}
    \int_T u \,dx = \frac{|T|}{3} \sum_{j = 1}^3 u(x_j), \quad
    \text{and hence} \quad
    \int_\Omega u\, dx = 0 \quad \forall \mathbf{u} \in \mathcal{U}.
  \end{displaymath}

  Let $r_a := \epsilon R_a$ and $r_b := \epsilon R_b$, then $H$
  defined in Lemma \ref{th:blend_poincare_cts} is identical to
  $\Omega_b$. For any element $T \subset \Omega_b$ it is
  straightforward to show that
  \begin{displaymath}
    \| \mathbf{u} \|_{\ell^2_\epsilon(T)} \leq C \| u \|_{L^2(T)}.
  \end{displaymath}
  This immediately implies
  \begin{equation}
    \label{eq:blend_ineq:10}
    \| \mathbf{u} \|_{\ell^2_\epsilon(\mathcal{L}^b)} \leq C \| u \|_{L^2(H)},
  \end{equation}
  for a constant $C$ that is independent of $\epsilon$, $R_a$, $K$ and
  $\mathbf{u}$.  Applying \eqref{eq:blend_poincare_cts} yields
  \begin{displaymath}
    \| \mathbf{u} \|_{\ell^2_\epsilon(\mathcal{L}^b)}^2 \leq C \big[
    (r_b-r_a)
    r_b |\log r_b| \big] \|\partial u \|_{L^2(\Omega)}^2.
  \end{displaymath}

  Fix $T \in \mathcal{T}$ and let $x_j \in T$ such that $x_j + a_j \in
  T$ as well. Employing \cite[Eq. (2.1)]{OrtnerShapeev:2010} we obtain
  \begin{displaymath}
    \sum_{j = 1}^3 \big| D_{a_j} u(x_j) \big|^2 = \sum_{j = 1}^3
    \big| (\partial u|_T) a_j \big|^2 = \smfrac{3}{2} \big| \partial u|_T \big|^2,
  \end{displaymath}
  and summing over $T \in \mathcal{T}, T \subset \bar{\Omega}$ we
  obtain that $\| \partial u \|_{L^2(\Omega)} \leq C \| D \mathbf{u}
  \|_{\ell^2_\epsilon}$. This concludes the proof.
\end{proof}

\subsection{Bounds on $L^{bqcf}_{b_1}$}
We focus only on the $b_1$-bonds, however, by symmetry analogous
results hold for all second-neighbor bonds. As in the 1D case, we
begin by converting the quadratic form $\la
L^{bqcf}_{b_1}\mathbf{u},\mathbf{u}\ra$ into divergence form. To that
end it is convenient to define the bond-dependent symmetric bilinear
forms and quadratic forms (although we write them like a norm they are
typically indefinite)
\begin{align*}
  \la r, s \ra_{b} := r^{\rm T} \phi''(Bb) s, \quad \text{and} \quad
  |r|_{b}^2 := \la r, r \ra_{b}, \qquad \text{for } r, s, b \in \mathbb{R}^2.
\end{align*}

\begin{lemma}\label{DivformLemma2D}
For any displacement $\mathbf{u}\in \mathcal{U}$, we have
\begin{equation}\label{Divform2D}
\la L^{bqcf}_{b_1}\mathbf{u},\mathbf{u}\ra
= \la L^{c}_{b_1}\mathbf{u},\mathbf{u}\ra-\epsilon^4\sum_{x\in\mathcal{L}}\beta(x-a_2)|D_{a_1}D_{a_2}u(x-a_1-a_2)|_{b_1}^2
+\mathbf{R}_{b_1}+\mathbf{S}_{b_1},
\end{equation}
where
\begin{align}\label{SigPart2D}
\begin{split}
\mathbf{R}_{b_1}:=&
-\epsilon^4\sum_{x\in\mathcal{L}}\big\{ D_{a_1}\beta(x-2a_1)
 \big\la D_{a_1}u(x-2a_1),
 D_{a_2}D_{a_2}u(x-a_1-a_2) \big\ra_{b_1}\\
&\qquad\qquad\quad
+ D_{a_2}\beta(x-a_2) \big\la D_{a_1}u(x-a_1),
D_{a_1}D_{a_2}u(x-a_1-a_2) \big\ra_{b_1}
\big\},\quad\text{and}\\
\mathbf{S}_{b_1}:=&-\epsilon^4\sum_{x\in\mathcal{L}}
D_{a_1}D_{a_1}\beta(x-2a_1)
\big\la u(x-a_1), D_{a_2}D_{a_2}u(x-a_1-a_2) \big\ra_{b_1}.
\end{split}
\end{align}
\end{lemma}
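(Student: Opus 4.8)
The plan is to derive \eqref{Divform2D} by the same strategy as the 1D computation in Lemma~\ref{DivformLemma}: start from the explicit formula for $\la L^{bqcf}_{b_1}\mathbf{u},\mathbf{u}\ra$, split it into the ``continuum part'' $\la L^c_{b_1}\mathbf{u},\mathbf{u}\ra$ plus a $\beta$-weighted remainder, and then repeatedly apply the 2D summation-by-parts identity \eqref{SumByPart2D} to move discrete derivatives off of $u$ and onto $\beta$, collecting the error terms $\mathbf{R}_{b_1}$ and $\mathbf{S}_{b_1}$ as we go. First I would write $\la L^{bqcf}_{b_1}\mathbf{u},\mathbf{u}\ra = \la \beta L^a_{b_1}\mathbf{u} + (1-\beta)L^c_{b_1}\mathbf{u}, \mathbf{u}\ra = \la L^c_{b_1}\mathbf{u},\mathbf{u}\ra + \la \beta(L^a_{b_1} - L^c_{b_1})\mathbf{u},\mathbf{u}\ra$, so that everything reduces to analyzing the $\beta$-weighted difference term. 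Using the explicit expressions for $L^a_{b_1}$ and $L^c_{b_1}$ given just before Lemma~\ref{DivformLemma2D}, the difference $L^a_{b_1}\mathbf{u} - L^c_{b_1}\mathbf{u}$ is, up to the factor $-\phi''(Bb_1)$, a second-difference operator that (via the identity $D_{b_1}D_{b_1} = D_{a_1}D_{a_1} + 2 D_{a_1}D_{a_2} + D_{a_2}D_{a_2}$ applied with appropriate shifts, since $b_1 = a_1 + a_2$) telescopes to a combination of fourth-order difference quantities $D_{a_1}D_{a_2}D_{a_2}D_{a_1}$-type terms paired against $u$.

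Next I would perform the summation by parts. Each application of \eqref{SumByPart2D} in direction $a_1$ or $a_2$ either produces a clean ``squared'' term $\la D_r D_r u(x-r), u(x)\ra \mapsto -\la D_r u(x-r), D_r u(x-r)\ra$ — which, applied to the $\beta$-free pieces that were absorbed into $L^c$, would simply reconstruct $\la L^c_{b_1}\mathbf{u},\mathbf{u}\ra$ minus the term $-\epsilon^4\sum_x \beta(x-a_2)|D_{a_1}D_{a_2}u(x-a_1-a_2)|_{b_1}^2$ — or, when $\beta$ is present, produces a term in which a single discrete derivative has landed on $\beta$ via the discrete Leibniz rule $D_r(\beta v)(x) = \beta(x+r) D_r v(x) + (D_r\beta(x)) v(x)$. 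The terms where exactly one derivative hits $\beta$ (with a first difference $D_{a_i}\beta$ still multiplying products of first and second differences of $u$) are precisely $\mathbf{R}_{b_1}$; the terms where two derivatives hit $\beta$ (producing $D_{a_1}D_{a_1}\beta$ against $u$ and a second difference of $u$) are precisely $\mathbf{S}_{b_1}$. The bookkeeping of shifts — tracking which translate of $x$ each factor is evaluated at as derivatives are moved around, and invoking $\Omega$-periodicity to re-index sums so that the arguments line up as in \eqref{SigPart2D} — is the bulk of the work, but it is mechanical once the order of summation-by-parts steps is fixed; the support condition \eqref{eq:supp_beta} guarantees no boundary terms survive (all sums are over the periodic cell).

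The main obstacle I expect is purely organizational rather than conceptual: in 1D the derivatives $D^{(2)}, D^{(3)}$ commute trivially and there is only one bond direction, whereas here $D_{a_1}$ and $D_{a_2}$ must be handled as genuinely different operators, the second-neighbor bond $b_1$ decomposes into a sum $a_1 + a_2$ (so $D_{b_1}D_{b_1}$ expands into \emph{three} distinct second-difference operators, not one), and the asymmetry between how $a_1$ and $a_2$ enter forces one to choose an order in which to peel off derivatives — leading to the slightly asymmetric-looking pair of terms inside $\mathbf{R}_{b_1}$. The cleanest way to keep this manageable is to first handle the $L^a_{b_1} - L^c_{b_1}$ difference at the level of difference operators (before any summation by parts), writing it as $-\phi''(Bb_1)$ times $[D_{b_1}D_{b_1} - D_{a_1}D_{a_1} - D_{a_2}D_{a_2} - 2D_{a_1}D_{a_2}]$ acting on suitable shifts of $u$ — which vanishes identically as an operator, so the whole difference $\la\beta(L^a_{b_1}-L^c_{b_1})\mathbf{u},\mathbf{u}\ra$ is a sum of commutator-type expressions in which $\beta$ fails to commute with the shifts — and then summation by parts produces exactly the stated remainders. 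I would carry out one representative chain of steps in detail and invoke symmetry/analogous manipulation for the rest, exactly as the paper does for the other second-neighbor bonds $b_2, b_3$.
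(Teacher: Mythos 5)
Your plan is correct and follows essentially the same route as the paper: split off $\la L^c_{b_1}\mathbf{u},\mathbf{u}\ra$ and analyze $\la L^a_{b_1}\mathbf{u}-L^c_{b_1}\mathbf{u},\beta\mathbf{u}\ra$, expand $D_{b_1}D_{b_1}u(x-b_1)$ via $b_1=a_1+a_2$ into shifted $D_{a_i}D_{a_j}$ terms so the difference reduces to $-\epsilon^4\sum_x\beta(x)u(x)\cdot D_{a_1}D_{a_1}D_{a_2}D_{a_2}u(x-a_1-a_2)$, and then apply \eqref{SumByPart2D} twice with the discrete Leibniz rule, sorting the single-$D\beta$ terms into $\mathbf{R}_{b_1}$ and the $D_{a_1}D_{a_1}\beta$ term into $\mathbf{S}_{b_1}$. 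The only caveat is cosmetic: the residual fourth-order term is not purely a ``$\beta$ fails to commute with shifts'' commutator (it survives even for $\beta\equiv 1$, producing the $-\epsilon^4\sum_x\beta(x-a_2)|D_{a_1}D_{a_2}u(x-a_1-a_2)|_{b_1}^2$ term), and periodicity alone (not \eqref{eq:supp_beta}) is what kills the boundary terms — but neither point affects the validity of the argument.
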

\begin{proof}
  For this purely algebraic proof we may assume without loss of
  generality that $\phi''(Bb_1) = {\rm I}$. In general, one may simply
  replace all Euclidean inner products with $\la \cdot, \cdot
  \ra_{b_1}$.

Starting from \eqref{eq:2}, we have
\begin{align*}
\la L^{bqcf}_{b_1}\mathbf{u},\mathbf{u}\ra
=& \la L^{c}_{b_1}\mathbf{u},\mathbf{u}\ra+\la L^{a}_{b_1}\mathbf{u}- L^{c}_{b_1}\mathbf{u},\beta\mathbf{u}\ra \\
=& \la L^{c}_{b_1}\mathbf{u},\mathbf{u}\ra
-\epsilon^2\sum_{x\in\mathcal{L}}\beta(x)u(x)\cdot\left[D_{b_1}D_{b_1}u(x-b_1)-D_{a_1}D_{a_1}u(x-a_1)
\right.\\
&\qquad\qquad\left. -D_{a_2}D_{a_2}u(x-a_2) -D_{a_1}D_{a_2}u(x-a_1)-D_{a_1}D_{a_2}u(x-a_2)\right].
\end{align*}
We will focus our analysis on $\la L^{a}_{b_1}\mathbf{u}- L^{c}_{b_1}\mathbf{u},\beta\mathbf{u}\ra$.

Noting that $b_1=a_1+a_2$, one can recast $D_{b_1}D_{b_1}u(x-b_1)$ as
\begin{align*}
D_{b_1}&D_{b_1}u(x-b_1)\\
=&\frac{1}{\epsilon^2}\left[u(x+b_1)-2u(x)+u(x-b_{1})\right]\\
=&D_{a_1}D_{a_2}u(x)+ D_{a_1}D_{a_1}u(x-a_1)
+D_{a_2}D_{a_2}u(x-a_2)+D_{a_1}D_{a_2}u(x-a_1-a_2).
\end{align*}
Applying the summation by parts formula \eqref{SumByPart2D}
to $\la L^{a}_{b_1}\mathbf{u}- L^{c}_{b_1}\mathbf{u},\beta\mathbf{u}\ra$, we
get
\begin{align*}
\la L^{a}_{b_1}\mathbf{u}- L^{c}_{b_1}\mathbf{u},\beta\mathbf{u}\ra
=& -\epsilon^3 \sum_{x\in\mathcal{L}}\beta(x)u(x)\cdot \Big[D_{a_{1}}D_{a_{1}}D_{a_{2}}u(x-a_{1})-D_{a_{1}}D_{a_{1}}D_{a_{2}}u(x-a_1-a_2)\Big]\\
=&  -\epsilon^4 \sum_{x\in\mathcal{L}}\beta(x)u(x)\cdot D_{a_{1}}D_{a_{1}}D_{a_{2}}D_{a_2}u(x-a_1-a_2)\\
=& \epsilon^4\sum_{x\in\mathcal{L}}D_{a_1}D_{a_2}D_{a_2}u(x-a_1-a_2)\cdot D_{a_1}\Big(\beta(x-a_1)u(x-a_1)\Big)\\
=&  \epsilon^4\sum_{x\in\mathcal{L}}D_{a_1}D_{a_2}D_{a_2}u(x-a_1-a_2)\cdot \Big[\beta(x)D_{a_1}u(x-a_1)+u(x-a_1)D_{a_1}\beta(x-a_1)\Big].
\end{align*}
Another application of the summation by parts formula
\eqref{SumByPart2D} converts $\la L^{a}_{b_1}\mathbf{u}-
L^{c}_{b_1}\mathbf{u},\beta\mathbf{u}\ra$ into
\begin{align*}
\la L^{a}_{b_1}\mathbf{u}- L^{c}_{b_1}\mathbf{u},\beta\mathbf{u}\ra
=&  \epsilon^4\sum_{x\in\mathcal{L}}D_{a_1}D_{a_2}D_{a_2}u(x-a_1-a_2)\cdot \big(u(x-a_1)D_{a_1}\beta(x-a_1)\big)\\
&\quad -\epsilon^4\sum_{x\in\mathcal{L}}D_{a_1}D_{a_2}u(x-a_1-a_2)\cdot \big(D_{a_2}\beta(x-a_2)D_{a_1}u(x-a_1)\big)\\
&\quad\quad-  \epsilon^4\sum_{x\in\mathcal{L}}D_{a_1}D_{a_2}u(x-a_1-a_2)\cdot\big( \beta(x-a_2)D_{a_1}D_{a_2}u(x-a_1-a_2)\big).
\end{align*}
The first two terms on the right-hand side can be rewritten as
\begin{align*}
\epsilon^4&\sum_{x\in\mathcal{L}}\big\{D_{a_1}D_{a_2}D_{a_2}u(x-a_1-a_2)\cdot \big(u(x-a_1)D_{a_1}\beta(x-a_1)\big)\big.\\
&\qquad\Big.-D_{a_1}D_{a_2}u(x-a_1-a_2)\cdot \big(D_{a_2}\beta(x-a_2)D_{a_1}u(x-a_1)\big)\big\}\\
&=-\epsilon^4\sum_{x\in\mathcal{L}}\Big(u(x-a_1)D_{a_1}D_{a_1}\beta(x-2a_1)\Big)\cdot D_{a_2}D_{a_2}u(x-a_1-a_2) \\
&\qquad-\epsilon^4\sum_{x\in\mathcal{L}}\big\{
D_{a_1}\beta(x-2a_1)D_{a_1}u(x-2a_1)\cdot D_{a_2}D_{a_2}u(x-a_1-a_2)\big.\\
&\qquad\qquad\qquad \big.
+D_{a_2}\beta(x-a_2)D_{a_1}u(x-a_1)\cdot D_{a_1}D_{a_2}u(x-a_1-a_2)
\big\}\\
&=\mathbf{S}_{b_1}+\mathbf{R}_{b_1}.
\end{align*}
Thus, we obtain \eqref{Divform2D} and \eqref{SigPart2D}.
\end{proof}

Next, we will bound the singular terms $\mathbf{R}_{b_1}$ and
$\mathbf{S}_{b_1}$, for which we introduce the notation
\begin{displaymath}
  \| D^{(2)} \beta \|_{\ell^\infty_\epsilon} := \max_{1 \leq i, j \leq
    6}  \| D_{a_i} D_{a_j} \beta \|_{\ell^\infty_\epsilon}, \quad
  \text{and} \quad
  \| D^{(3)} \beta \|_{\ell^\infty_\epsilon} := \max_{1 \leq i, j, k \leq
    6}  \| D_{a_i} D_{a_j} D_{a_k} \beta \|_{\ell^\infty_\epsilon}.
\end{displaymath}

% \begin{displaymath}
% \|\triangle \beta\|_{\ell_{\epsilon}^{\infty}}:=\max_{1\le i\le 3}\|D_{a_i}D_{a_i} \beta\|_{\ell_{\epsilon}^{\infty}}
% \quad \text{and}\quad
%  \|D\triangle \beta\|_{\ell_{\epsilon}^{\infty}}:=\max_{1\le i,j\le 3}\|D_{a_j}D_{a_i}D_{a_i} \beta\|_{\ell_{\epsilon}^{\infty}}.
% \end{displaymath}

\begin{lemma}\label{SigEstLemma2D}
  The terms $\mathbf{R}_{b_1}$ and $\mathbf{S}_{b_1}$ defined in
  \eqref{SigPart2D} are bounded by
\begin{align}
%\begin{split}
\left|\mathbf{R}_{b_1}\right|
\le & 4\epsilon^2|\phi''(Bb_1)| \, \|D\beta\|_{\ell_\epsilon^{\infty}}\|D\mathbf{u}\|_{\ell_\epsilon^{2}}^2, \quad\text{and}\label{SigEst2D}\\
%%%%%
\left|\mathbf{S}_{b_1}\right|
\le&
C\epsilon^{2} |\phi''(Bb_1)| \Big[ \|D^{(2)} \beta\|_{\ell_{\epsilon}^{\infty}}
+\|D^{(3)} \beta\|_{\ell_{\epsilon}^{\infty}}\, \CPab \Big]\|D\mathbf{u}\|_{\ell_{\epsilon}^2}^2,
\end{align}
where $C$ is a generic constant and $\CPab$ is defined in Lemma \ref{eq:blend_poincare}.
\end{lemma}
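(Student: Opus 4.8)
The plan is to bound the two sums $\mathbf{R}_{b_1}$ and $\mathbf{S}_{b_1}$ from \eqref{SigPart2D} term-by-term, using the same toolbox as in the 1D estimate (Lemma~\ref{SigEstLemma}): H\"older's inequality to pull out $L^\infty$-norms of the discrete derivatives of $\beta$, the elementary bound $\|D^{(2)}\mathbf{u}\|_{\ell^2_\epsilon}\le (2/\epsilon)\|D\mathbf{u}\|_{\ell^2_\epsilon}$ and its analogue for $D_{a_i}D_{a_j}$, and finally the discrete blended Poincar\'e inequality of Lemma~\ref{DiscrBlenPoin:lemma} to absorb the bare factor $u(x-a_1)$ in $\mathbf{S}_{b_1}$. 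Throughout, $|\phi''(Bb_1)|$ denotes the operator norm, which controls $|\la r,s\ra_{b_1}|\le |\phi''(Bb_1)|\,|r|\,|s|$.

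\emph{Estimate of $\mathbf{R}_{b_1}$.} The sum has two groups of terms. For the first, $\big|\la D_{a_1}u(x-2a_1), D_{a_2}D_{a_2}u(x-a_1-a_2)\ra_{b_1}\big|\le |\phi''(Bb_1)|\,|D_{a_1}u(x-2a_1)|\,|D_{a_2}D_{a_2}u(x-a_1-a_2)|$; pulling out $\|D_{a_1}\beta\|_{\ell^\infty_\epsilon}$ and applying Cauchy--Schwarz in $x$, we get a bound $\epsilon^4\|D\beta\|_{\ell^\infty_\epsilon}|\phi''(Bb_1)|\,\|D\mathbf{u}\|_{\ell^2_\epsilon}\|D^{(2)}\mathbf{u}\|_{\ell^2_\epsilon}$, and since $\|D^{(2)}\mathbf{u}\|_{\ell^2_\epsilon}\le (2/\epsilon)\|D\mathbf{u}\|_{\ell^2_\epsilon}$ this is $\le 2\epsilon^3\|D\beta\|_{\ell^\infty_\epsilon}|\phi''(Bb_1)|\,\|D\mathbf{u}\|_{\ell^2_\epsilon}^2$. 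The second group, involving $D_{a_1}D_{a_2}u(x-a_1-a_2)$, is handled identically. Being slightly generous with the $\epsilon$-powers (replacing $\epsilon^3$ by $\epsilon^2$, which only weakens the bound) and summing the two contributions gives $|\mathbf{R}_{b_1}|\le 4\epsilon^2|\phi''(Bb_1)|\,\|D\beta\|_{\ell^\infty_\epsilon}\|D\mathbf{u}\|_{\ell^2_\epsilon}^2$, which is \eqref{SigEst2D}.

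\emph{Estimate of $\mathbf{S}_{b_1}$.} Here the delicate point is that $\mathbf{S}_{b_1}$ contains a plain factor $u(x-a_1)$, not a difference. The idea is to split the sum using the support condition \eqref{eq:supp_beta}: since $D_{a_1}D_{a_1}\beta$ is supported in $\Omega_b$, the $x$-sum is effectively over (a bounded neighborhood of) $\mathcal{L}^b$. One route is direct H\"older plus the discrete blended Poincar\'e inequality: $|\mathbf{S}_{b_1}|\le \epsilon^4\|D^{(2)}\beta\|_{\ell^\infty_\epsilon}|\phi''(Bb_1)|\,\|\mathbf{u}\|_{\ell^2_\epsilon(\mathcal{L}^b)}\|D^{(2)}\mathbf{u}\|_{\ell^2_\epsilon}\le C\epsilon^2\|D^{(2)}\beta\|_{\ell^\infty_\epsilon}|\phi''(Bb_1)|\,\CPab\,\|D\mathbf{u}\|_{\ell^2_\epsilon}^2$. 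To obtain the sharper mixed bound with both $\|D^{(2)}\beta\|$ and $\|D^{(3)}\beta\|\,\CPab$ — mirroring the way $\mathbf{T}$ was split off in 1D — one should instead apply one more summation by parts in the $a_1$-direction to move a difference off $D_{a_2}D_{a_2}u$, writing $u(x-a_1)D^{(2)}_{a_1}\beta(x-2a_1)$ paired with a second-difference of $u$; summation by parts trades this for a term with $D^{(3)}\beta$ times a lower-order difference of $u$ (controlled by $\CPab\|D\mathbf{u}\|_{\ell^2_\epsilon}$) plus boundary-type terms with $D^{(2)}\beta$ and $\|D\mathbf{u}\|^2_{\ell^2_\epsilon}$. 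Collecting these yields the claimed estimate.

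\emph{Main obstacle.} The routine part is the H\"older/Cauchy--Schwarz bookkeeping; the genuine difficulty is the $\mathbf{S}_{b_1}$ term, where a naive estimate produces $\|D^{(2)}\beta\|_{\ell^\infty_\epsilon}\CPab$ but the optimal blending-width scaling requires separating the ``volume'' contribution ($\|D^{(2)}\beta\|$, no Poincar\'e loss) from the ``interface'' contribution ($\|D^{(3)}\beta\|\,\CPab$, with the weaker $\CPab$ factor). Getting this cleanly is exactly the 2D analogue of the 1D split $\mathbf{R}+\mathbf{S}$ versus $\mathbf{T}$, and it hinges on one well-chosen summation by parts together with careful use of \eqref{eq:supp_beta} to confine all the $\beta$-derivative terms to $\Omega_b$ so that Lemma~\ref{DiscrBlenPoin:lemma} applies.
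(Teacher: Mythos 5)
Your proposal follows essentially the same route as the paper's proof: H\"older/Cauchy--Schwarz plus the inverse estimate $\|D_{a_i}D_{a_j}\mathbf{u}\|_{\ell^2_\epsilon}\le (2/\epsilon)\|D\mathbf{u}\|_{\ell^2_\epsilon}$ for $\mathbf{R}_{b_1}$, and, for $\mathbf{S}_{b_1}$, one further discrete product rule and summation by parts that splits it into a $D^{(2)}\beta\,|D_{a_2}u|^2$ piece (no Poincar\'e loss) and a $D^{(3)}\beta\,\la D_{a_2}u, u\ra$ piece absorbed via \eqref{eq:supp_beta} and Lemma~\ref{DiscrBlenPoin:lemma} --- exactly the paper's decomposition. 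Two small imprecisions: the extra summation by parts is taken in the $a_2$-direction (the direction of the second difference $D_{a_2}D_{a_2}u$ paired with the bare $u$), not the $a_1$-direction; and with the $\epsilon^2$-weighted norms the Cauchy--Schwarz step reads $\epsilon^4\sum_x|f||g|\le\epsilon^2\|f\|_{\ell^2_\epsilon}\|g\|_{\ell^2_\epsilon}$ rather than $\epsilon^4\|f\|_{\ell^2_\epsilon}\|g\|_{\ell^2_\epsilon}$, so that carrying the normalization through honestly gives $4\epsilon$ (not $4\epsilon^2$) in \eqref{SigEst2D} --- a discrepancy already present in the paper's own statement, so your two slips happen to cancel to the displayed constant.
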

\begin{proof}
According to the expression of $\mathbf{R}_{b_1}$ given in \eqref{SigPart2D}
and noting that
\[
\|D_{a_2}D_{a_2}\mathbf{u}\|_{\ell_\epsilon^2}^2\le
\frac{4}{\epsilon^2}\|D\mathbf{u}\|_{\ell_\epsilon^2}^2
\quad \text{and} \quad  \|D_{a_1}D_{a_2}\mathbf{u}\|_{\ell_\epsilon^2}^2\le
\frac{4}{\epsilon^2}\|D\mathbf{u}\|_{\ell_\epsilon^2}^2,
\]
we immediately obtain the first inequality of \eqref{SigEst2D}.

We first rewrite $\mathbf{S}_{b_1}$ as
\begin{align}
%\begin{split}
\mathbf{S}_{b_1}=&
-\epsilon^4\sum_{x\in\mathcal{L}}D_{a_1}D_{a_1}\beta(x-2a_1) \big\la
D_{a_2}D_{a_2}u(x-a_1-a_2),  u(x-a_1) \big\ra_{b_1} \nonumber\\
=&-\epsilon^4\sum_{x\in\mathcal{L}}D_{a_1}D_{a_1}\beta(x-2a_1)
D_{a_2}\big\la
D_{a_2}u(x-a_1-a_2), u(x-a_1-a_2)\big\ra_{b_1} \nonumber\\
&\qquad+ \epsilon^4\sum_{x\in\mathcal{L}}D_{a_1}D_{a_1}\beta(x-2a_1)
\big\la D_{a_2}u(x-a_1-a_2), D_{a_2}u(x-a_1-a_2)\big\ra_{b_1}\nonumber\\
=&
\epsilon^4\sum_{x\in\mathcal{L}}D_{a_2}D_{a_1}D_{a_1}\beta(x-2a_1-a_2)
\big\la D_{a_2}u(x-a_1-a_2)\cdot u(x-a_1-a_2) \big\ra_{b_1} \nonumber\\
&\qquad+
\epsilon^4\sum_{x\in\mathcal{L}}D_{a_1}D_{a_1}\beta(x-2a_1)\big|D_{a_2}u(x-a_1-a_2)\big|_{b_1}^2.\label{SigEst2DEq1}
%\end{split}
\end{align}
For the second term in \eqref{SigEst2DEq1}, we
have
\[
\Big| \epsilon^4\sum_{x\in\mathcal{L}}D_{a_1}D_{a_1}\beta(x-2a_1)\big|D_{a_2}u(x-a_1-a_2)\big|_{b_1}^2 \Big|
\le
\epsilon^2 | \phi''(Bb_1)| \|D^{(2)} \beta\|_{\ell_{\epsilon}^{\infty}}\|D\mathbf{u}\|_{\ell_{\epsilon}^2}^2.
\]
For the first term, we have
\begin{align*}
&\Big|\epsilon^4\sum_{x\in\mathcal{L}}D_{a_2}D_{a_1}D_{a_1}\beta(x-2a_1-a_2)
\big\la D_{a_2}u(x-a_1-a_2), u(x-a_1-a_2)\big\ra_{b_1} \Big|\\
&\qquad\le \epsilon^2 \,|\phi''(Bb_1)|\,\|\mathbf{u}\,D_{a_2}D_{a_1}D_{a_1}\beta\|_{\ell_{\epsilon}^2} \|D\mathbf{u}\|_{\ell_{\epsilon}^2}
\le \epsilon^2 \,|\phi''(Bb_1)| \,\|D^{(3)} \beta\|_{\ell_{\epsilon}^{\infty}}
\,\|\mathbf{u}\|_{\ell_{\epsilon}^2(\mathcal{L}^b)} \,\|D\mathbf{u}\|_{\ell_{\epsilon}^2}.
\end{align*}
The last inequality comes from the assumption \eqref{eq:supp_beta},
which ensures that ${\rm supp}(D_{a_2}D_{a_1}D_{a_1}\beta) \subset
\Omega_b$.

Applying Lemma~\ref{DiscrBlenPoin:lemma} yields the bound for ${\bf
  S}_{b_1}$.
\end{proof}

To summarize the estimates of this section we define a self-adjoint
operator $\tilde{L}$ by
\begin{equation}
  \label{eq:defn_Ltil}
  \la \tilde{L} \mathbf{u}, \mathbf{u} \ra :=
  \la L^c \mathbf{u}, \mathbf{u} \ra - \epsilon^4 \sum_{j = 1}^3
  \sum_{x \in \mathcal{L}} \beta(x-a_2) \big| D_{a_j}
  D_{a_{j+1}} u(x - a_1 - a_2) \big|_{b_1}^2;
\end{equation}
then, Lemma \ref{DivformLemma2D} and Lemma \ref{SigEstLemma2D}
immediately yield the following result.

\begin{corollary}
  \label{th:lbqcf_Ltil_bound}
  Suppose that $R_a$ and $R_b$ are defined such that
  \eqref{eq:supp_beta} holds; then, for all $\mathbf{u} \in
  \mathcal{U}$,
  \begin{equation}
    \label{eq:Lbqcf_Ltil_bound}
    \la L^{bqcf} \mathbf{u}, \mathbf{u} \ra \geq \la \tilde{L}
    \mathbf{u}, \mathbf{u} \ra - C \, C'' \big[ \epsilon^2 \| D \beta
    \|_{\ell^\infty} + \epsilon^2 \|D^{(2)}\beta\|_{\ell^\infty}  +
    \epsilon^2 \CPab \| D^{(3)} \beta \|_{\ell^\infty} \big]
    \|D\mathbf{u}\|_{\ell^2_\epsilon}^2,
  \end{equation}
  where $C$ is a generic constant, $C'' := \max_{j = 1,2,3} |\phi''(B
  b_j)|$ and $\CPab$ is defined in Lemma \ref{eq:blend_poincare}.
\end{corollary}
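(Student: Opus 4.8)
The plan is to decompose $L^{bqcf}$ over bond directions and treat the nearest- and second-neighbour contributions separately. First I would use the decomposition $(L^{bqcf}\mathbf{u})(x) = \sum_{r \in \mathcal{N}} (L^{bqcf}_r \mathbf{u})(x)$ recorded above, together with the fact that $L^a_{a_j} = L^c_{a_j}$ for every nearest-neighbour direction $a_j$. This gives $(L^{bqcf}_{a_j}\mathbf{u})(x) = \beta(x)(L^a_{a_j}\mathbf{u})(x) + (1-\beta(x))(L^c_{a_j}\mathbf{u})(x) = (L^c_{a_j}\mathbf{u})(x)$, so the nearest-neighbour part of $L^{bqcf}$ coincides with that of $L^c$; since the correction term in \eqref{eq:defn_Ltil} involves only second-neighbour bonds, this part of $\tilde{L}$ is also exactly the nearest-neighbour part of $L^c$. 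Hence these contributions cancel between the two sides of \eqref{eq:Lbqcf_Ltil_bound}, and only the second-neighbour bonds $\pm b_1, \pm b_2, \pm b_3$ remain to be analysed.

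For the second-neighbour bonds I would apply Lemma \ref{DivformLemma2D}. It is stated only for $b_1 = a_1 + a_2$, but by the rotational symmetry of the triangular lattice (relabelling $(a_1,a_2,a_3)\mapsto(a_2,a_3,a_1)$, which cyclically permutes $b_1 \mapsto b_2 \mapsto b_3$) the same divergence-form identity holds for $b_2$ and $b_3$ with correspondingly relabelled error terms $\mathbf{R}_{b_j}$ and $\mathbf{S}_{b_j}$. Summing the three identities yields
\begin{displaymath}
  \la L^{bqcf}\mathbf{u},\mathbf{u}\ra
  = \la \tilde{L}\mathbf{u},\mathbf{u}\ra + \sum_{j=1}^3 \big( \mathbf{R}_{b_j} + \mathbf{S}_{b_j} \big),
\end{displaymath}
because $\sum_{j=1}^3 \la L^c_{b_j}\mathbf{u},\mathbf{u}\ra$ together with the already-cancelled nearest-neighbour terms reconstitutes $\la L^c \mathbf{u},\mathbf{u}\ra$, while the diagonal negative terms $-\epsilon^4\sum_x \beta(x-a_2)|D_{a_1}D_{a_2}u(\cdot)|_{b_1}^2$ and their rotated analogues assemble into the correction appearing in \eqref{eq:defn_Ltil}.

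It then remains to estimate $\sum_j (\mathbf{R}_{b_j} + \mathbf{S}_{b_j})$, for which I would invoke Lemma \ref{SigEstLemma2D}, once more transferring the $b_1$-bounds to $b_2, b_3$ by the same relabelling. Since the norms $\|D^{(2)}\beta\|_{\ell^\infty_\epsilon}$, $\|D^{(3)}\beta\|_{\ell^\infty_\epsilon}$ (and $\|D\beta\|_{\ell^\infty_\epsilon}$) are defined as maxima over all lattice directions, the estimates for all three second-neighbour directions are controlled by the same quantities; summing them and bounding $|\phi''(Bb_j)| \leq C''$ gives
\begin{displaymath}
  \Big| \sum_{j=1}^3 \big( \mathbf{R}_{b_j} + \mathbf{S}_{b_j} \big) \Big|
  \leq C\,C'' \big[ \epsilon^2 \|D\beta\|_{\ell^\infty} + \epsilon^2 \|D^{(2)}\beta\|_{\ell^\infty} + \epsilon^2 \CPab \|D^{(3)}\beta\|_{\ell^\infty} \big] \|D\mathbf{u}\|_{\ell^2_\epsilon}^2,
\end{displaymath}
and substituting this into the displayed identity yields \eqref{eq:Lbqcf_Ltil_bound}. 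The only genuinely delicate point is the bookkeeping in the symmetry reduction: one must check that the cyclic relabelling of $(a_1,a_2,a_3)$ really does carry Lemmas \ref{DivformLemma2D} and \ref{SigEstLemma2D} over to the $b_2$- and $b_3$-bonds verbatim, and that the hypothesis \eqref{eq:supp_beta}, being imposed for all triples in $\{1,\dots,6\}^3$, applies uniformly to the third-difference terms of $\beta$ arising for every second-neighbour direction (this is what legitimises the use of Lemma \ref{DiscrBlenPoin:lemma} inside the $\mathbf{S}_{b_j}$ estimates). Everything else is a direct assembly of results already established.
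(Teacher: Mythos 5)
Your proposal is correct and follows exactly the route the paper intends: the paper gives no explicit proof, stating only that Lemma~\ref{DivformLemma2D} and Lemma~\ref{SigEstLemma2D} ``immediately yield'' the corollary, and your assembly -- cancellation of the nearest-neighbour parts since $L^a_{a_j}=L^c_{a_j}$, symmetry transfer of the $b_1$-identity and estimates to $b_2,b_3$, and summation into the $\tilde L$ correction -- is precisely the omitted bookkeeping.
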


Based on the analysis and numerical experiments in
\cite{OrtnerShapeev:2010} for a similar linearized operator we expect
that the region of stability for $\tilde{L}$ is the same as for $L^a$;
that is, $\tilde{L}$ is positive definite for a macroscopic strain $B$
if and only if $L^a$ is positive definite. However, we are at this
point unable to prove this result. Instead, we have the following
weaker result. The proof is elementary.

\begin{proposition}
  \label{th:stab_Ltil}
  Suppose that $B \in \mathbb{R}^{2 \times 2}$ is such that $L^c$ is
  positive definite,
  \begin{displaymath}
    \la L^c \mathbf{u}, \mathbf{u} \ra \geq \gamma_c \| D \mathbf{u}
    \|_{\ell^2_\epsilon}^2 \qquad \forall \mathbf{u} \in \mathcal{U},
  \end{displaymath}
  and suppose that $\phi''(Bb_j) \leq \delta {\rm I}$ where $\delta <
  \gamma_c / 4$, then $\tilde{L}$ is positive definite,
  \begin{equation}
    \label{eq:stab_Ltil}
    \la \tilde{L} \mathbf{u}, \mathbf{u} \ra \geq \tilde{\gamma} \| D \mathbf{u}
    \|_{\ell^2_\epsilon}^2 \qquad \forall \mathbf{u} \in \mathcal{U},
  \end{equation}
  with $\tilde{\gamma} = \gamma_c - 4\delta$.
\end{proposition}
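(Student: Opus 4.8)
The plan is to separate $\la\tilde L\mathbf{u},\mathbf{u}\ra$ into the coercive continuum part $\la L^c\mathbf{u},\mathbf{u}\ra$ and the $\beta$-weighted second-difference correction, and then to control the latter crudely in terms of $\|D\mathbf{u}\|_{\ell^2_\epsilon}^2$. Writing $S$ for the sum subtracted in \eqref{eq:defn_Ltil}, so that $\la\tilde L\mathbf{u},\mathbf{u}\ra=\la L^c\mathbf{u},\mathbf{u}\ra-S$, it suffices to show $S\le 4\delta\|D\mathbf{u}\|_{\ell^2_\epsilon}^2$; combined with the hypothesis $\la L^c\mathbf{u},\mathbf{u}\ra\ge\gamma_c\|D\mathbf{u}\|_{\ell^2_\epsilon}^2$ this gives \eqref{eq:stab_Ltil}, and positive-definiteness then follows since $\gamma_c-4\delta>0$ by assumption.

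To estimate $S$, I would first use $\phi''(Bb_j)\le\delta{\rm I}$: for every $r\in\mathbb{R}^2$ one has $|r|_{b_j}^2=r^{\rm T}\phi''(Bb_j)r\le\delta|r|^2$, so since $0\le\beta\le 1$ each summand of $S$ is bounded by $\delta\,|D_{a_j}D_{a_{j+1}}u(\cdot)|^2$, giving
\[
S\le \delta\,\epsilon^4\sum_{j=1}^3\sum_{x\in\mathcal{L}}\big|D_{a_j}D_{a_{j+1}}u(x-a_1-a_2)\big|^2 .
\]
(I read the second subscript in \eqref{eq:defn_Ltil} as $b_j$, matching the $b_j$-bond from which the term originates.) Next, exactly as in the proof of Lemma~\ref{SigEstLemma2D}, the elementary bound $\|D_{a_j}D_{a_{j+1}}\mathbf{u}\|_{\ell^2_\epsilon}^2\le 4\epsilon^{-2}\|D_{a_{j+1}}\mathbf{u}\|_{\ell^2_\epsilon}^2$ together with $\Omega$-periodicity — which removes the shift $x-a_1-a_2$ and identifies $\|D_{a_4}\mathbf{u}\|_{\ell^2_\epsilon}=\|D_{a_1}\mathbf{u}\|_{\ell^2_\epsilon}$ — yields
\[
\epsilon^4\sum_{j=1}^3\sum_{x\in\mathcal{L}}\big|D_{a_j}D_{a_{j+1}}u(x-a_1-a_2)\big|^2\le 4\sum_{i=1}^3\|D_{a_i}\mathbf{u}\|_{\ell^2_\epsilon}^2=4\|D\mathbf{u}\|_{\ell^2_\epsilon}^2 .
\]
Combining the two displays gives $S\le 4\delta\|D\mathbf{u}\|_{\ell^2_\epsilon}^2$ and hence \eqref{eq:stab_Ltil} with $\tilde\gamma=\gamma_c-4\delta$.

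The argument is entirely elementary — as the statement already signals — and presents no genuine obstacle. The one point that requires a moment's care is obtaining the constant $4$ rather than $12$ in the second displayed estimate: this is precisely where one uses that the three cross-directions $a_{j+1}$, $j=1,2,3$, exhaust the three nearest-neighbor directions $a_1,a_2,a_3$, so that the three contributions reassemble into a single copy of $\|D\mathbf{u}\|_{\ell^2_\epsilon}^2$ instead of being estimated one at a time. (The inequality $|r|_{b_j}^2\le\delta|r|^2$ combined with $0\le\beta\le1$ is used in the relevant regime $\delta\ge0$; if $\delta<0$ the correction only adds to $\la L^c\mathbf{u},\mathbf{u}\ra$ and positive-definiteness, with constant $\gamma_c$, is immediate.)
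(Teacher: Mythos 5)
Your argument is correct, and it is precisely the elementary computation the paper alludes to but omits (the paper simply states ``The proof is elementary'' and gives nothing further). The two points you single out --- reading the subscript in \eqref{eq:defn_Ltil} as $b_j$, and pairing each mixed difference $D_{a_j}D_{a_{j+1}}$ with the single first difference $D_{a_{j+1}}$ so that the three contributions reassemble into one copy of $\|D\mathbf{u}\|_{\ell^2_\epsilon}^2$ and give the constant $4$ rather than $12$ --- are exactly the right ones, and your caveat that the stated constant $\tilde\gamma=\gamma_c-4\delta$ should only be asserted for $\delta\geq 0$ is also apt.
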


\subsection{Positivity of the B-QCF operator in 2D}
The \textit{blending width} $K$ is again a crucial ingredient in the
stability analysis for $L^{bqcf}$. Due to the simple geometry we have
chosen it straightforward to generalize Lemma~\ref{BlendFunEstLemma}
to the two-dimensional case, using the same arguments as in 1D.

\begin{lemma}
  \label{2DBlendFunEstLemma}
  It is possible to choose $\beta$ such that
  \begin{equation}
    \label{2Deq:BlendFunEst_upper}
    \| D^{(j)} \beta \|_{\ell^\infty} \leq C_\beta (K \epsilon)^{-j}.
    \quad \text{for } j = 1, 2, 3,
  \end{equation}
\end{lemma}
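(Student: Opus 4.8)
The plan is to mirror the one-dimensional construction of Lemma~\ref{BlendFunEstLemma}(i), replacing the affine rescaling of a $1$D reference profile by the composition of such a profile with a \emph{smoothed hexagonal gauge}. Let $h:\mathbb{R}^2\to[0,\infty)$ be the gauge (Minkowski functional) of $\mathtt{Hex}(1)$, so that $\mathtt{Hex}(r)=\{h\le r\}$; it is convex, positively $1$-homogeneous and Lipschitz with constant $L:=\max_{|v|=1}h(v)$, but only piecewise affine, with creases along the directions $\pm b_1,\pm b_2,\pm b_3$. Composing a $1$D profile directly with $h$ fails: at lattice sites whose finite-difference stencils straddle a crease, $D^{(2)}\beta$ and $D^{(3)}\beta$ are too large by a factor of $K$, and since we control $\ell^\infty$-norms a single such site is fatal. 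We therefore first mollify, $h_\mu:=h*\rho_\mu$ with $\rho_\mu$ a standard mollifier at scale $\mu:=c\,\epsilon K$, where $c>0$ is a sufficiently small absolute constant. (For the finitely many values of $K$ with $c\epsilon K<\epsilon$ the claimed bound $(\epsilon K)^{-j}$ is, up to a constant, just $\epsilon^{-j}$, and any fixed admissible $\beta$ works; so we may assume $K$ exceeds an absolute constant.) Standard estimates give $\|h_\mu-h\|_{L^\infty}\le L\mu$ and $\|\nabla^j h_\mu\|_{L^\infty}\le C_h(\epsilon K)^{1-j}$ for $j=1,2,3$, with $C_h$ depending only on $c$, $L$ and $\rho$.

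Now fix $\eta\in C^3(\mathbb{R})$ with $\eta\equiv 1$ on $(-\infty,0]$, $\eta\equiv 0$ on $[1,\infty)$, $0\le\eta\le 1$, and set
\[
  \beta(x):=\eta\!\left(\frac{h_\mu(x)-\epsilon R_a-\delta}{\epsilon K-2\delta}\right),
  \qquad \delta:=L\mu+C_0 L\epsilon,
\]
where $C_0$ bounds the diameter of a third finite-difference stencil divided by $\epsilon$; extend $\beta$ by the constant $0$ outside $\mathtt{Hex}(\epsilon R_b)$ (which, since $R_b\le N/2$, lies well inside one period) and then $\Omega$-periodically. Choosing $c$ small (and using that $K$ exceeds an absolute constant) gives $\delta\le\tfrac14\epsilon K$, hence $\epsilon K-2\delta\ge\tfrac12\epsilon K$. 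Since $\|h_\mu-h\|_\infty\le L\mu\le\delta$, one checks directly that $\beta\equiv 1$ on $\Omega_a$ and $\beta\equiv 0$ on $\Omega_c$; moreover, if $h(x)\le\epsilon R_a$ then every point $y$ of a third-difference stencil at $x$ has $h_\mu(y)\le\epsilon R_a+C_0L\epsilon+L\mu\le\epsilon R_a+\delta$, so $\beta$ is constant on that stencil, and similarly if $h(x)\ge\epsilon R_b$. Thus $D_{a_{i_1}}D_{a_{i_2}}D_{a_{i_3}}\beta(x)\ne 0$ forces $\epsilon R_a<h(x)<\epsilon R_b$, i.e.\ $x\in\Omega_b$, which is precisely the support condition \eqref{eq:supp_beta}.

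Finally, the bound \eqref{2Deq:BlendFunEst_upper} follows from the higher-order chain rule: by scaling, $\|\eta^{(j)}\|_\infty\le C(\epsilon K-2\delta)^{-j}\le C'(\epsilon K)^{-j}$, and combined with $\|\nabla^j h_\mu\|_\infty\le C_h(\epsilon K)^{1-j}$ every term in the expansion of $\nabla^j(\eta\circ h_\mu)$ is $\le C_\beta(\epsilon K)^{-j}$ for $j=1,2,3$; since each lattice vector $a_i$ has length exactly $\epsilon$, every discrete difference $D_{a_i}$ is bounded by the corresponding continuous derivative, so $\|D^{(j)}\beta\|_{\ell^\infty}\le C\|\nabla^j\beta\|_{L^\infty}\le C_\beta(\epsilon K)^{-j}$. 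The only genuinely new ingredient over the $1$D argument is smoothing the hexagonal gauge at the blending scale $\eqsim\epsilon K$ (equivalently, replacing the exact maximum of the six half-space functionals by a log-sum-exp at temperature $\eqsim(\epsilon K)^{-1}$); everything else is the $1$D scaling argument repeated in two dimensions. Accordingly, the main obstacle is calibrating this smoothing scale so that it is simultaneously coarse enough to make $D^{(2)}\beta, D^{(3)}\beta$ of order $(\epsilon K)^{-2},(\epsilon K)^{-3}$ and fine enough that the transition layer still sits strictly between $\mathtt{Hex}(\epsilon R_a)$ and $\mathtt{Hex}(\epsilon R_b)$ — which is exactly what the choice $\mu=c\,\epsilon K$ with $\delta=L\mu+C_0L\epsilon$ accomplishes.
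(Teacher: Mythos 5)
Your proof is correct, and it is essentially the route the paper has in mind: the paper offers no proof of this lemma at all, asserting only that the 1D scaling argument of Lemma~\ref{BlendFunEstLemma}(i) generalizes ``due to the simple geometry,'' so the fair comparison is with that intended one-line generalization. What you add is a genuine technical point that the paper's assertion glosses over: the hexagonal gauge $h$ is only piecewise affine, so composing a rescaled 1D profile with $h$ directly produces second and third differences that are too large by factors of $K$ and $K^2$ at the lattice sites whose stencils straddle the crease rays --- fatal for an $\ell^\infty$ bound, as you rightly observe. Your fix (mollify at scale $\mu\eqsim\epsilon K$, insert the buffer $\delta$ so the transition layer stays inside $\Omega_b$) is sound, the bookkeeping $\|\nabla^j h_\mu\|_\infty\lesssim(\epsilon K)^{1-j}$ plus Fa\`a di Bruno plus domination of discrete differences by continuous derivatives (using $|a_i|=\epsilon$) is all correct, and you even verify the support condition \eqref{eq:supp_beta}, which the lemma does not request. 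The one remark worth making is that the ``simple geometry'' admits an even more elementary construction that sidesteps mollification entirely: writing $\mathtt{Hex}(r)=\{x:|\ell_i(x)|\le r,\ i=1,2,3\}$ as an intersection of three slabs and setting $\beta(x)=\prod_{i=1}^3\eta\big((|\ell_i(x)|-\epsilon R_a)/(\epsilon K)\big)$ (with the same $O(\epsilon)$ buffers you already introduce) gives a $C^3$ function --- each factor is constant near the non-smooth set $\{\ell_i=0\}$ --- equal to $1$ on $\Omega_a$ and $0$ on $\Omega_c$, and the product and chain rules yield \eqref{2Deq:BlendFunEst_upper} immediately. Your construction is more robust (it works for any convex blending geometry), while the product form buys brevity for this particular hexagon; either is a legitimate proof.
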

% \begin{proof}
% Since $\Omega_b$ and $\Omega_{\mathcal{J}}$ are annulus, so we can repeat the proof of Lemma~\ref{BlendFunEstLemma}
% by considering $\beta$ as a radial function: $\beta(x):=\hat{\beta}(r)$, where $r=|x|$.
% \end{proof}

Since we cannot fully characterize the stability of $\tilde{L}$ in
terms of the stability of $L^a$ or $L^c$ we will only prove stability
of $L^{bqcf}$ subject to the assumption that $\tilde{L}$ is
stable. Proposition \ref{th:stab_Ltil} gives sufficient conditions.

\begin{theorem}\label{BlendSizeThm2D}
  Suppose that $\beta$ is chosen quasi-optimally so that
  \eqref{2Deq:BlendFunEst_upper} is attained; then,
  \begin{displaymath}
    \la L^{bqcf} \mathbf{u}, \mathbf{u} \ra \geq \gamma_{bqcf}
    \| D\mathbf{u} \|_{\ell^2_\epsilon}^2,
  \end{displaymath}
  where
  \begin{displaymath}
    \gamma_{bqcf} := \tilde{\gamma} -
    C\, C''\, \big[ \epsilon^{-1/2} K^{-5/2} |\epsilon R_b
    \log(\epsilon R_b)|^{1/2} \big],
  \end{displaymath}
  where $C$ is a generic constant and $C''$ is defined in Corollary
  \ref{th:lbqcf_Ltil_bound}.

  In particular, if $\tilde{L}$ is positive definite
  \eqref{eq:stab_Ltil} and if $K$ is sufficiently large, then
  $L^{bqcf}$ is positive definite.
\end{theorem}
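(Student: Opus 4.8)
The plan is to combine the two structural results already in hand. Corollary~\ref{th:lbqcf_Ltil_bound} bounds $\la L^{bqcf}\mathbf{u},\mathbf{u}\ra$ from below by $\la\tilde L\mathbf{u},\mathbf{u}\ra$ minus an error term built from the discrete derivatives $\|D^{(j)}\beta\|_{\ell^\infty}$, $j=1,2,3$, the last of which is amplified by the blended Poincar\'e constant $\CPab$ from Lemma~\ref{DiscrBlenPoin:lemma}; Lemma~\ref{2DBlendFunEstLemma} then provides the quasi-optimal bounds $\|D^{(j)}\beta\|_{\ell^\infty}\le C_\beta(K\epsilon)^{-j}$. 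First I would substitute these bounds into \eqref{eq:Lbqcf_Ltil_bound}. The three error contributions become, respectively, $\epsilon^2(K\epsilon)^{-1}=\epsilon K^{-1}$, $\epsilon^2(K\epsilon)^{-2}=K^{-2}$, and, after inserting $\CPab=[(\epsilon K)(\epsilon R_b)|\log(\epsilon R_b)|]^{1/2}$, $\epsilon^2\CPab(K\epsilon)^{-3}=\epsilon^{-1/2}K^{-5/2}|\epsilon R_b\log(\epsilon R_b)|^{1/2}$.

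The second step is the bookkeeping that the third contribution dominates the other two up to an absolute constant under the standing hypotheses $R_a<R_b\le N/2$ (hence $1\le K\le R_b\le\tfrac12\epsilon^{-1}$ and $|\log(\epsilon R_b)|\ge\log 2$). Indeed $K^{-2}\lesssim\epsilon^{-1/2}K^{-5/2}(\epsilon R_b)^{1/2}$ reduces to $K\le R_b$, and $\epsilon K^{-1}\lesssim\epsilon^{-1/2}K^{-5/2}(\epsilon R_b)^{1/2}$ reduces, using $K\le R_b$, to $\epsilon R_b\lesssim 1$; absorbing all three into one term gives
\[
\la L^{bqcf}\mathbf{u},\mathbf{u}\ra \ge \la\tilde L\mathbf{u},\mathbf{u}\ra - C\,C''\big[\epsilon^{-1/2}K^{-5/2}|\epsilon R_b\log(\epsilon R_b)|^{1/2}\big]\,\|D\mathbf{u}\|_{\ell^2_\epsilon}^2 .
\]
Finally I would invoke the hypothesis that $\tilde L$ is positive definite, i.e.\ \eqref{eq:stab_Ltil} with constant $\tilde\gamma$ (a sufficient condition being $\phi''(Bb_j)\le\delta\,\mathrm{I}$ with $\delta<\gamma_c/4$, by Proposition~\ref{th:stab_Ltil}), to replace $\la\tilde L\mathbf{u},\mathbf{u}\ra$ by $\tilde\gamma\|D\mathbf{u}\|_{\ell^2_\epsilon}^2$; this yields the asserted lower bound with $\gamma_{bqcf}=\tilde\gamma-C\,C''[\epsilon^{-1/2}K^{-5/2}|\epsilon R_b\log(\epsilon R_b)|^{1/2}]$. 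For the ``in particular'' clause, note that with $R_a$ fixed and $R_b=R_a+K$ the error term behaves like $K^{-2}|\log(\epsilon K)|^{1/2}$, which tends to $0$ as the blending width $K$ grows in the relevant asymptotic regime, so $\gamma_{bqcf}>0$ once $K$ is large enough.

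There is no genuine obstacle specific to this theorem: all the substantive work---the divergence-form identity (Lemma~\ref{DivformLemma2D}), the estimates on the singular terms $\mathbf{R}_{b_1},\mathbf{S}_{b_1}$ via the discrete blended Poincar\'e inequality (Lemmas~\ref{SigEstLemma2D} and \ref{DiscrBlenPoin:lemma}), and the construction of quasi-optimal $\beta$---is already done. The only care needed is the comparison in the second step, which is where the blending width enters with the exponent $-5/2$ and where the extra factor $|\epsilon R_b\log(\epsilon R_b)|^{1/2}$, absent in 1D, appears, reflecting the logarithmic loss in the $2$D trace inequality underlying Lemma~\ref{th:blend_poincare_cts}. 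The one caveat worth stating explicitly is that, unlike the $1$D Theorem~\ref{BlendSizeThm}, we cannot here establish sharpness of the exponent, because the stability of $\tilde L$ is itself only sufficiently, not fully, characterized.
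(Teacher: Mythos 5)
Your proposal is correct and follows exactly the paper's route: substitute the quasi-optimal bounds of Lemma~\ref{2DBlendFunEstLemma} into Corollary~\ref{th:lbqcf_Ltil_bound}, observe that under $K \le R_b \le N/2$ the third contribution $\epsilon^{-1/2}K^{-5/2}|\epsilon R_b\log(\epsilon R_b)|^{1/2}$ dominates the other two, and then use the coercivity constant $\tilde\gamma$ of $\tilde L$. The only difference is that you spell out the absorption of the first two error terms, which the paper leaves implicit in its final inequality.
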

\begin{proof}
  From Corollary \ref{th:lbqcf_Ltil_bound} and
  \eqref{2Deq:BlendFunEst_upper} we obtain
  \begin{align*}
    \la L^{bqcf} \mathbf{u}, \mathbf{u} \ra  \geq~&
    \big\{ \tilde{\gamma} - C\,C''\,\big[ \epsilon^2 (\epsilon K)^{-1}
    + \epsilon^2 (\epsilon K)^{-2} + \epsilon^2 (\epsilon K)^{-5/2}
     |\epsilon R_b\log(\epsilon R_b)|^{1/2} \big]\big\}
    \| D \mathbf{u} \|_{\ell^2_\epsilon}^2 \\
    \geq~& \big\{ \tilde{\gamma} - C\,C''\,\big[ \epsilon^{-1/2} K^{-5/2}
   |\epsilon R_b \log(\epsilon R_b)|^{1/2} \big] \big\}
    \| D \mathbf{u} \|_{\ell^2_\epsilon}^2. \qedhere
  \end{align*}
\end{proof}

\begin{remark}
  Suppose that $\tilde{\gamma} > 0$, uniformly as $N \to \infty$ (or,
  $\epsilon \to 0$). In this limit, we would like to understand how to
  optimally scale $K$ with $R_a$. (Note that $R_a$ controls the
  modeling error; cf. Remark \ref{2DB-QCFremark2}.) We consider three
  different scalings of $R_a$.

 {\it Case 1: } Suppose that $R_a$ is bounded as $\epsilon \to 0$. In
  that case, we obtain
  \begin{align}
    \notag
    \gamma_{bqcf} - \tilde{\gamma} =~& - C \,C''\,
    \epsilon^{-1/2} K^{-5/2} | \epsilon (R_a+K) \log (\epsilon
    (R_a+K)) |^{1/2} \\
    \notag
    =~& - C \,C''\, K^{-2} \big| \big(1 + \smfrac{R_a}{K}\big)
    \big(\log (\epsilon K) + \log(1+\smfrac{R_a}{K}) \big)\big|^{1/2}
    \\
    \label{eq:scaling_case2}
    \eqsim~& - C\, C''\, K^{-2} |\log(\epsilon K)|^{1/2}.
  \end{align}
  From this it is easy to see that $L^{bqcf}$ will be positive
  definite provided we select $K \gg |\log \epsilon|^{1/4}$.

  {\it Case 2: } Suppose that $1 \ll R_a \ll \epsilon^{-1}$; to
  precise, let $R_a \sim \epsilon^{-\alpha}$ for some $\alpha \in (0,
  1)$. Then, a similar computation as \eqref{eq:scaling_case2} yields
  \begin{displaymath}
    \gamma_{bqcf} - \tilde{\gamma} \eqsim K^{-5/2} \big| (K +
    \epsilon^{-\alpha}) (\log\epsilon + \log(K+\epsilon^{-\alpha}))\big|^{1/2},
  \end{displaymath}
  and we deduce that, in this case, $L^{bqcf}$ will positive definite
  provided we select $K \gg \epsilon^{-\alpha/5} |\log
  \epsilon|^{1/5}$.

  {\it Case 3: } Finally, the case when the atomistic region is
  macroscopic, i.e., $R_a = O(\epsilon^{-1})$, can be treated
  precisely as the 1D case and hence we obtain that, if we select $K
  \gg \epsilon^{-1/5}$, then $L^{bqcf}$ is positive.

  In summary, we have shown that, in the limit as $\epsilon \to 0$, if
  $\tilde{L}$ is positive definite, $R_a = O(\epsilon^{-\alpha})$ and
  if we choose
  \begin{align}\label{BlendSize2D}
    K\gg
    \begin{cases}
      |\log{\epsilon}|^{1/4}, & \quad \alpha = 0,  \\
      |\log\epsilon|^{1/5}\epsilon^{-\alpha/5},& \quad 0<\alpha<1,\\
      \epsilon^{-1/5}, & \quad \alpha = 1,
    \end{cases}
  \end{align}
  then the B-QCF operator $L^{bqcf}$ is positive definite and
  $\gamma^{bqcf} \sim \tilde{\gamma}$ as $\epsilon \to 0$. We
  emphasize that these are very mild restrictions on the blending
  width. \hfill \qed
\end{remark}

It remains to show that the sufficient conditions we derived to
guarantee positivity of $L^{bqcf}$ are sharp. A result as general as
\eqref{eq:1d_coerc_upper} in 1D would be very technical to obtain;
instead, we offer a brief formal discussion for a special case.

\begin{figure}
  \includegraphics[width=7cm]{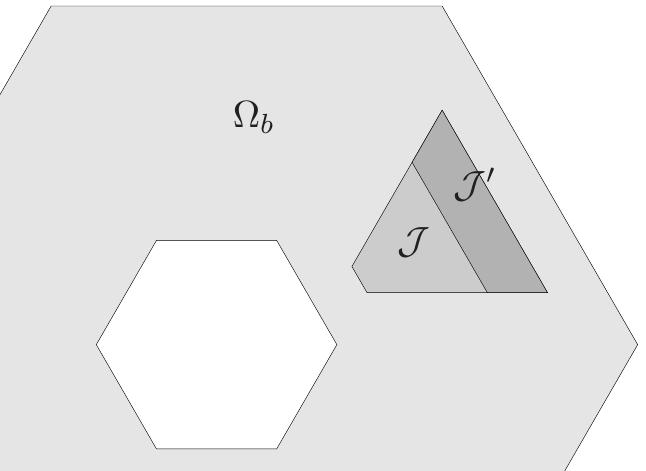}
  \caption{\label{fig:sharp_2d} Visualization of the construction
    discussed in \ref{rem:sharp_2d}: the white region is the atomistic
    domain, the light gray region the blending region, the medium gray
    region and dark gray regions together are the set $\mathcal{J}$
    and the dark gray region is the set $\mathcal{J}'$.}
\end{figure}

\begin{remark} \label{rem:sharp_2d} We consider again the limit as
  $\epsilon \to 0$, and for simplicity restrict ourselves to the case
  where $0 \ll K \eqsim \epsilon^{-\theta}$ and $0 \ll R_a \eqsim
  \epsilon^{-\alpha}$, for $0 < \theta \leq \alpha \leq 1$. In
  particular, $R_b \eqsim \epsilon^{-\alpha}$ as well.

  We assume that $D_{a_3} \beta(x) = 0$ for all $x \in \mathcal{J}
  \subset \mathcal{L}^b$, as depicted in Figure
  \ref{fig:sharp_2d}. The set $\mathcal{J}$ should be chosen so that
  its size is comparable with that of $\mathcal{L}^b$, but
  sufficiently small to still allow $\beta$ to satisfy the bound
  \eqref{2Deq:BlendFunEst_upper}. We can now repeat the 1D argument
  along atomic layers to obtain that
  \begin{displaymath}
    D_{a_2} D_{a_1} D_{a_1} \beta(x) \leq - \smfrac{1}{2} (\epsilon
    K)^{-3}
    \eqsim - \epsilon^{-3 + 3 \theta}
  \end{displaymath}
  for all $x$ in a subset $\mathcal{J}' \subset \mathcal{J}$
  containing entire atomic planes, that has comparable size to
  $\mathcal{J}$; that is, $\# \mathcal{J}' \eqsim K R_b \eqsim
  \epsilon^{-\theta-\alpha}$.

  Suppose now that $\phi''(Bb_1)$ has a negative eigenvalue $\lambda$
  with corresponding normalized eigenvector $\hat{u} \in
  \mathbb{R}^2$, then we seek test functions of the form $u(x) =
  \mu(x) \hat{u}$. It is now relatively straightforward, applying the
  1D argument in normal direction and using a smooth cut-off in the
  tangential direction, to construct $\mu$ supported in $\mathcal{J}'$
  with $D_{a_2} \mu(x) \eqsim (\epsilon^2 \# \mathcal{J}')^{-1/2}$ so
  that $\| D \mathbf{u} \|_{\ell^2_\epsilon} \eqsim 1$, and
  \begin{align*}
    &
    \epsilon^4\sum_{x\in\mathcal{L}}D_{a_2}D_{a_1}D_{a_1}\beta(x-2a_1-a_2)
    \big\la D_{a_2}u(x-a_1-a_2),  u(x-a_1-a_2) \big\ra_{b_1}   \\
    & \hspace{1cm} = \epsilon^4 \lambda_1
    \sum_{x\in\mathcal{L}}D_{a_2}D_{a_1}D_{a_1}\beta(x-2a_1-a_2)
    D_{a_2}\mu(x-a_1-a_2) \mu(x-a_1-a_2)  \\
    & \hspace{1cm} \lesssim - \epsilon^4 \lambda_1 (\# \mathcal{J}')
    (K \epsilon)^{-3} (\epsilon^2 \# \mathcal{J}')^{-1/2} \eqsim -
    \epsilon^{(5\theta-\alpha)/2}.
  \end{align*}
  This shows that, if $K \ll \epsilon^{-\alpha/5}$, then $L^{bqcf}$ is
  necessarily indefinite.

  In summary, for the specific interface geometry and a particular
  choice of $\beta$ (which does, however, lead to the quasi-optimal
  bound \eqref{2Deq:BlendFunEst_upper}) we have shown that Theorem
  \ref{BlendSizeThm2D} is sharp up to logarithmic terms. \hfill \qed
\end{remark}

\begin{remark}\label{2DB-QCFremark2}
  In practise, for the computation of different types of defects, we
  would first choose an appropriate scaling $R_a=\epsilon^{-\alpha}$
  for the atomistic region, considering the accuracy of the B-QCF
  method, and then choose the blending width $K$ in order to ensure
  stability.

  For instance, for a point defect in 2D with zero Burger's vector it
  is expected that the displacement field satisfies $u_a(x) = y_a(x) -
  Bx \eqsim \epsilon / r$, where $r$ is the distance from the defect
  \cite{PointDefect, OrtnerShapeev:2010}. Without coarse-graining, the
  local continuum (QCL) model has a modeling error of order
  $O(\epsilon^2 |\partial^3 u_a|)$ (see \cite{ortner:qnl1d,
    LuskinXingjie.qnl1d, Dobson:2008b} for proofs in 1D and
  \cite{VKOr:blend2} for a proof in arbitrary dimensions); and
  although we have not established it rigorously, we expect that
  modeling error for the B-QCF method outside the atomistic region is
  also of second order; see also \cite{qcf.stab}.

  From $u(x) \eqsim \epsilon / r$ we can make the reasonable
  assumption that $|\partial^3 y_a| \eqsim \epsilon / r^{4}$, from
  which we obtain (assuming also stability) that the total error is of
  the order
  \begin{displaymath}
    \| \partial (y_a - y_{bqcf}) \|_{L^2} \eqsim \epsilon^2
    \| \partial^3 y_a \|_{L^2(\Omega \setminus\Omega_a)} \eqsim
    \epsilon^3 \Big( \int_{\epsilon R_a}^1 r |r^{-4}|^2 dr \Big)^{1/2}
    \eqsim R_a^{-3}.
  \end{displaymath}
  Hence, if we wish to obtain $\| \partial (y_a - y_{bqcf}) \|_{L^2}
  \eqsim \epsilon^k$, $0 < k < 3$, then we need to choose
  \begin{displaymath}
    R_a \eqsim \epsilon^{-k/3}, \quad \text{and
      consequently} \quad K \gg \epsilon^{-k/15} |\log
    \epsilon|^{1/5}.
  \end{displaymath}
  With this choice we can ensure both the stability and
  $O(\epsilon^k)$ accuracy of the B-QCF method; provided that our
  assumption that the B-QCF method has indeed a second-order modelling
  error is correct. \hfill \qed
\end{remark}

\section{Conclusion}
We have studied the stability a blended force-based quasicontinuum
(B-QCF) method. In 1D we were able to identify an asymptotically
optimal condition on the width of the blending region to ensure that
the linearized B-QCF operator is coercive if and only if the atomistic
operator is coercive as well.  In the $2$D B-QCF model, we have
obtained rigorous sufficient conditions and have presented a heuristic
argument suggesting that they are sharp up to logarithmic terms. In 2D
our proof of coercivity of $L^{bqcf}$ relies on the coercivity of the
auxiliary operator $\tilde{L}$ defined in \eqref{eq:defn_Ltil}, for
which we cannot give sharp conditions at this point.

The main conclusion of this work is that the required blending width
to ensure coercivity of the linearized B-QCF operator is surprisingly
small.

Our analysis in this paper is the first step towards a complete a
priori error analysis of the B-QCF method, which will require a
coercivity analysis of the B-QCF operator linearized about arbitrary
states, as well as a consistency analysis in negative Sobolev norms.

\section{Appendix: A Trace Inequality}
\label{sec:app_trace}
In the following trace theorem, $S(1)$ denotes the unit sphere in
$\mathbb{R}^d$, $r := |x|$ and $\theta := x / |x|$. Upon taking $\psi
\equiv 1$ and employing standard orthogonal decompositions it is easy
to check that the result is sharp. In particular, for $d = 2$,
consider the case $u(x) = \log |x|$.

\begin{lemma}
  Let $d \geq 2$, $\psi : S(1) \to (0, 1]$ be Lipschitz continuous,
  and $\Sigma := \{ \psi(\sigma) \sigma : \sigma \in S(1)
  \}$. Moreover, let $0 < r_0 < r_1 \leq 1$, and $A := \bigcup_{r_0 <
    r < r_1} (r\Sigma)$, then
  \begin{align}
    \label{eq:general_trace}
    & \| u \|_{L^2(r_0 \Sigma)}^2 \leq C_0\| u \|_{L^2(A)}^2 +
    C_1 \| \partial u \|_{L^2(A)}^2, \quad \forall u \in H^1(A), \\
    & \text{where} \quad
    C_0 = \frac{2 d}{r_1 - r_0} \Big(\frac{r_0}{r_1}\Big)^{d-1},
    \quad \text{and}   \quad
    C_1 =
    \begin{cases}
      2 r_0 |\log r_0|, & d = 2 \\
      2 r_0 / (d-2), & d \geq 3.
    \end{cases}
  \end{align}
\end{lemma}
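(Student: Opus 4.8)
The plan is to run the classical trace-inequality argument in polar coordinates: integrate the fundamental theorem of calculus along rays from the origin out into the shell $A$, apply a Cauchy--Schwarz inequality weighted by the Euclidean volume density $\tau^{d-1}$, and then integrate over directions. The weight is chosen precisely so that the right-hand side reassembles the volume norms over $A$.

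Concretely, I would parametrize by $\sigma \in S(1)$ and the Euclidean radius $t$. In standard polar coordinates $x = t\sigma$ one has $A = \{\, t\sigma : \sigma \in S(1),\ r_0\psi(\sigma) < t < r_1\psi(\sigma)\,\}$, so $\|u\|_{L^2(A)}^2 = \int_{S(1)} \int_{r_0\psi(\sigma)}^{r_1\psi(\sigma)} |u(t\sigma)|^2 t^{d-1}\,dt\,d\sigma$ and likewise for $\|\partial u\|_{L^2(A)}^2$, while the surface measure on the radial graph $r_0\Sigma = \{\, r_0\psi(\sigma)\sigma : \sigma\in S(1)\,\}$ has density $(r_0\psi)^{d-2}\sqrt{(r_0\psi)^2 + r_0^2 |\nabla_S \psi|^2}$ with respect to $d\sigma$. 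Since $\psi$ is merely Lipschitz, all of this is valid for almost every $\sigma$ (a.e.\ differentiability of $\psi$, and Fubini in polar coordinates so that $t \mapsto u(t\sigma)$ is $H^1$ along a.e.\ ray).

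Then, for a fixed admissible $\sigma$ and every $t \in (r_0\psi(\sigma), r_1\psi(\sigma))$, the fundamental theorem of calculus along the ray gives
\[
  u(r_0\psi(\sigma)\sigma) = u(t\sigma) - \int_{r_0\psi(\sigma)}^{t} \sigma \cdot \nabla u(\tau\sigma)\, d\tau ,
\]
whence $|u(r_0\psi(\sigma)\sigma)| \le |u(t\sigma)| + \int_{r_0\psi(\sigma)}^{r_1\psi(\sigma)} |\nabla u(\tau\sigma)|\, d\tau$ because $|\sigma| = 1$. Squaring with $(a+b)^2 \le 2a^2 + 2b^2$, multiplying by $t^{d-1}$, integrating $t$ over $(r_0\psi, r_1\psi)$, and dividing by $\int t^{d-1}\,dt = \psi^d(r_1^d - r_0^d)/d$ yields
\[
  |u(r_0\psi\sigma)|^2 \le \frac{2d}{\psi^d(r_1^d - r_0^d)} \int_{r_0\psi}^{r_1\psi} |u(t\sigma)|^2 t^{d-1}\, dt + 2\Big( \int_{r_0\psi}^{r_1\psi} |\nabla u(\tau\sigma)|\, d\tau \Big)^2 .
\]
For the last term I would use Cauchy--Schwarz with weight $\tau^{d-1}$, bounding it by $\big(\int_{r_0\psi}^{r_1\psi} |\nabla u(\tau\sigma)|^2 \tau^{d-1}\, d\tau\big)\big(\int_{r_0\psi}^{r_1\psi} \tau^{-(d-1)}\, d\tau\big)$; the last factor equals $\log(r_1/r_0) \le |\log r_0|$ when $d = 2$ and is $\le (r_0\psi)^{-(d-2)}/(d-2)$ when $d \ge 3$ (this is where $r_1 \le 1$ enters, and $\psi$ cancels in the logarithm).

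To finish, multiply the pointwise estimate by the surface-measure density of $r_0\Sigma$ and integrate over $S(1)$: the left side becomes $\|u\|_{L^2(r_0\Sigma)}^2$, the $|u(t\sigma)|^2$-term becomes $\|u\|_{L^2(A)}^2$ times a coefficient that — after the powers of $\psi$ collapse — is a multiple of $r_0^{d-1}/(r_1^d - r_0^d)$, bounded via $r_1^d - r_0^d \ge (r_1 - r_0) r_1^{d-1}$ by $C_0 = \tfrac{2d}{r_1 - r_0}(r_0/r_1)^{d-1}$, and the gradient term becomes $\|\partial u\|_{L^2(A)}^2$ times $2 r_0^{d-1}$ multiplied by the $\int \tau^{-(d-1)}$ factor, which collapses (using $r_0^{d-1} \cdot r_0^{-(d-2)} = r_0$) to $C_1 = 2 r_0 |\log r_0|$ for $d = 2$ and $C_1 = 2 r_0 /(d-2)$ for $d \ge 3$. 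The one step to watch is this bookkeeping: the powers of $\psi$ from the surface-measure density cancel the powers from $\int_{r_0\psi}^{r_1\psi}$ exactly when $\psi \equiv 1$ (the case for which, as remarked before the statement, testing with $u = \log|x|$ in $d = 2$ shows sharpness), and for general Lipschitz $\psi$ the same computation gives the bound with $C_0, C_1$ multiplied by a geometric factor controlled by $\|\sqrt{\psi^2 + |\nabla_S \psi|^2}\|_{L^\infty(S(1))}$ and $\inf \psi$ — an $O(1)$ constant for the hexagonal graph used in Lemma~\ref{th:blend_poincare_cts}, absorbed into the generic constant there. Apart from this, the argument is entirely routine; the only substantive feature is the $d = 2$ versus $d \ge 3$ dichotomy in $C_1$, which comes directly from the borderline divergence of $\int_0 \tau^{-(d-1)}\, d\tau$.
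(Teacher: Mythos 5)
Your argument is correct and is essentially the paper's own proof: both integrate the fundamental theorem of calculus along rays from the inner surface, apply Cauchy--Schwarz with the weight $\tau^{d-1}$ to produce the $|\log r_0|$ (resp.\ $r_0^{-(d-2)}/(d-2)$) factor, average the resulting pointwise bound over the radial variable against $t^{d-1}\,dt$, and then integrate over directions; the only difference is that you parametrize by $\sigma\in S(1)$ and the Euclidean radius, while the paper parametrizes by $\sigma\in\Sigma$ and the scaling parameter $r\in(r_0,r_1)$, which is the same computation in different variables. Your closing remark about the $O(1)$ geometric factor controlled by $\sqrt{\psi^2+|\nabla_S\psi|^2}$ and $\inf\psi$ is, if anything, more careful than the paper, which silently identifies $\int_{r_0}^{r_1}\int_{r\Sigma}|\partial u|^2\,dS\,dr$ with $\|\partial u\|_{L^2(A)}^2$ --- exact only for $\psi\equiv 1$, but harmless since the lemma is only invoked through Lemma~\ref{th:blend_poincare_cts} with a generic constant.
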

\begin{proof}
  Since $A$ is a Lipschitz domain we may assume, without loss of
  generality that $u \in C^1(\bar{A})$. The symbol $dS$ denotes the
  $(d-1)$-dimensional Hausdorff measure in $\mathbb{R}^d$.

  Let $r_0 < s < r_1$, then
  \begin{align}
    \notag
    \int_{r_0\Sigma} |u|^2 dS =~& r_0^{d-1} \int_\Sigma
    |u(r_0\sigma)|^2 dS_\sigma \\
    \notag
    =~& r_0^{d-1}  \int_\Sigma \bigg| u(s\sigma) -  \int_{r = r_0}^s \frac{d}{dr}
    u(r \sigma) dr \bigg|^2 dS_\sigma \\
    \label{eq:trace:10}
    \leq~& 2 r_0^{d-1} \int_\Sigma \big|u(s\sigma)\big|^2 dS_\sigma
    + 2 r_0^{d-1} \int_\Sigma \bigg| \int_{r = r_0}^s \partial u
    \cdot \sigma
    dr \bigg|^2 dS_\sigma.
  \end{align}
  By hypothesis we have $|\sigma| \leq 1$ for all $\sigma \in \Sigma$,
  hence the second term on the right-hand side can be further
  estimated, applying also the Cauchy--Schwartz inequality, by
  \begin{align*}
    2 r_0^{d-1} \int_\Sigma \bigg| \int_{r = r_0}^s \partial u
    \cdot \sigma
    dr \bigg|^2 dS_\sigma
    \leq~& 2 r_0^{d-1} \int_\Sigma \int_{r = r_0}^s r^{-d+1} dr \,
    \int_{r = r_0}^s r^{d-1} |\partial u(r\sigma)|^2 dr \, dS_\sigma \\
    =~& 2 r_0^{d-1} (J(s) - J(r_0)) \int_{r = r_0}^{s}
    \int_{r\Sigma} |\partial u|^2 dS \,dr \\
    \leq~& 2 r_0^{d-1} (J(s) - J(r_0)) \| \partial u \|_{L^2(A)}^2,
  \end{align*}
  where $J'(t) = t^{-d+1}$, that is, $J(t) = \log t$ if $d = 2$ and
  $J(t) = t^{-d+2} / (-d+2)$ if $d \geq 3$.  Since $J(s)$ is negative
  and strictly increasing for $s \leq 1$ we obtain
  \begin{equation}
    \label{eq:trace:20}
    2 r_0^{d-1} \int_\Sigma \bigg| \int_{r = r_0}^s \partial u
    \cdot \sigma
    dr \bigg|^2 dS_\sigma
    \leq 2 r_0^{d-1} |J(r_0)| \| \partial u \|_{L^2(A)}^2.
  \end{equation}

  Inserting \eqref{eq:trace:20} into \eqref{eq:trace:10}, multiplying
  the resulting inequality by $s^{d-1}$ and integrating over $s \in
  (r_0, r_1)$ yields
  \begin{align*}
    \smfrac{r_1^d - r_0^d}{d} \|u\|_{L^2(r_0\Sigma)}^2
    =~& \int_{s =
      r_0}^{r_1} s^{d-1} \int_{r_0\Sigma} |u|^2 dS\, ds \\
    \leq~&  2 r_0^{d-1} \int_{s = r_0}^{r_1} s^{d-1} \int_\Sigma
    \big|u(s\sigma)\big|^2 dS_\sigma\, ds
    + 2 r_0^{d-1} J(r_0) \smfrac{r_1^d - r_0^d}{d} \| \partial u
    \|_{L^2(A)}^2.
 \end{align*}
 Dividing through by $\frac{r_1^d - r_0^d}{d}$ we obtain
 \begin{displaymath}
   \|u\|_{L^2(r_0\Sigma)}^2 \leq \smfrac{2 d r_0^{d-1}}{r_1^d - r_0^d}
   \| u \|_{L^2(A)}^2 + 2 r_0^{d-1} J(r_0) \| \partial u
    \|_{L^2(A)}^2.
 \end{displaymath}
 Finally, estimating $r_0^d - r_1^d \geq (r_1 - r_0) r_1^{d-1}$ yields
 the stated trace inequality.
\end{proof}

\section{Acknowledgments}
We appreciate helpful discussions with Brian Van Koten.

\bibliographystyle{abbrv}
\bibliography{BQCFstab}

%%%%%%%%%%%%%%%%%%%%%%%%%%%
\end{document}